\theoremstyle{plain}
\newtheorem{theorem}                 {Theorem}      [section]
\newtheorem{lemma}        [theorem]  {Lemma}
\newtheorem{proposition}  [theorem]  {Proposition}
\theoremstyle{definition}
\newtheorem{definition}   [theorem]  {Definition}
\numberwithin{equation}{section}
\def \cn{{\mathbb C}}
\def \hn{{\mathbb H}}
\def \H{{\mathbb H}}
\def \rn{{\mathbb R}}
\def \zn{{\mathbb Z}}
\def \B{\mathcal B}
\def \E{\mathcal E}
\def \H{\mathcal H}
\def\nab#1#2{\hbox{$\nabla$\kern -.3em\lower 1.0 ex
		\hbox{$#1$}\kern -.1 em {$#2$}}}
\def \Re{\mathfrak R\mathfrak e}
\def \g{\mathfrak{g}}
\def \l{\mathfrak{l}}
\def \m{\mathfrak{m}}
\def \GLR#1{\text{\bf GL}_{#1}(\rn)}
\def \glr#1{\mathfrak{gl}_{#1}(\rn)}
\def \GLC#1{\text{\bf GL}_{#1}(\cn)}
\def \glc#1{\mathfrak{gl}_{#1}(\cn)}
\def \glcp#1{\mathfrak{gl}^+_{#1}(\cn)}
\def \glcm#1{\mathfrak{gl}^-_{#1}(\cn)}
\def \GLH#1{\text{\bf GL}_{#1}(\hn)}
\def \glh#1{\mathfrak{gl}_{#1}(\hn)}
\def \glhp#1{\mathfrak{gl}^+_{#1}(\hn)}
\def \glhm#1{\mathfrak{gl}^-_{#1}(\hn)}
\def \SLR#1{\text{\bf SL}_{#1}(\rn)}
\def \slr#1{\mathfrak{sl}_{#1}(\rn)}
\def \SLC#1{\text{\bf SL}_{#1}(\cn)}
\def \slc#1{\mathfrak{sl}_{#1}(\cn)}
\def \SLH#1{\text{\bf SL}_{#1}(\hn)}
\def \slh#1{\mathfrak{sl}_{#1}(\hn)}
\def \SO#1{\text{\bf SO}(#1)}
\def \so#1{\mathfrak{so}(#1)}
\def \SOs#1{\text{\bf SO}^*(#1)}
\def \sos#1{\mathfrak{so}^*(#1)}
\def \SOO#1#2{\text{\bf SO}(#1,#2)}
\def \soo#1#2{\mathfrak{so}(#1,#2)}
\def \SOC#1{\text{\bf SO}(#1,\cn)}
\def \soC#1{\mathfrak{so}(#1,\cn)}
\def \U#1{\text{\bf U}(#1)}
\def \u#1{\mathfrak{u}(#1)}
\def \SU#1{\text{\bf SU}(#1)}
\def \su#1{\mathfrak{su}(#1)}
\def \SUU#1#2{\text{\bf SU}(#1,#2)}
\def \suu#1#2{\mathfrak{su}(#1,#2)}
\def \SUs#1{\text{\bf SU}^*(#1)}
\def \sus#1{\mathfrak{su}^*(#1)}
\def \Sp#1{\text{\bf Sp}(#1)}
\def \sp#1{\mathfrak{sp}(#1)}
\def \Spp#1#2{\text{\bf Sp}(#1,#2)}
\def \spp#1#2{\mathfrak{sp}(#1,#2)}
\def \SpR#1{\text{\bf Sp}(#1,\rn)}
\def \spR#1{\mathfrak{sp}(#1,\rn)}
\def \SpC#1{\text{\bf Sp}(#1,\cn)}
\def \spC#1{\mathfrak{sp}(#1,\cn)}
\DeclareMathOperator{\Div}{div} 
\DeclareMathOperator{\trace}{trace}
\numberwithin{equation}{section}
\begin{document}

\title[$p$-Harmonic functions and harmonic morphisms on Lie groups]
{Proper $p$-harmonic functions and harmonic morphisms on the classical non-compact semi-Riemannian Lie groups}

\author{Elsa Ghandour}
\address{Mathematics, Faculty of Science\\
University of Lund\\
Box 118, Lund 221 00\\
Sweden}
\email{Elsa.Ghandour@math.lu.se}

\author{Sigmundur Gudmundsson}
\address{Mathematics, Faculty of Science\\
	University of Lund\\
	Box 118, Lund 221 00\\
	Sweden}
\email{Sigmundur.Gudmundsson@math.lu.se}

\begin{abstract}
We apply the method of eigenfamilies to construct new explicit complex-valued $p$-harmonic functions on the  non-compact classical Lie groups, equipped with their natural semi-Riemannian metrics.  We then employ this same approach to manufacture explicit complex-valued harmonic morphisms on these groups.
\end{abstract}

\subjclass[2020]{31B30, 53C43, 58E20}

\keywords{$p$-harmonic functions, harmonic morphisms, semi-Riemannian classical groups}


\maketitle


\section{Introduction}
\label{section-introduction}


For a positive integer $p$, the complex-valued $p$-harmonic functions are solutions to a partial differential equation of order $2\,p$. This equation arises in various contexts, see for example the extensive analysis in \cite{Gaz-Gru-Swe} and a historic account in \cite{Mel}.  The best known applications are in physics e.g. for $p=2$ in the areas of continuum mechanics, including elasticity theory and the solution of Stokes flows. 
The literature on $2$-harmonic functions is vast, but until quiet recently, the domains were either surfaces or open subsets of flat Euclidean space, with only very few exceptions.  For this see the regularly updated online bibliography \cite{Gud-p-bib} maintained by the second author.
\medskip

In their recent article \cite{Gud-Sob-1}, the authors produce  $p$-harmonic functions on the classical Lie groups equipped with their standard Riemannian metrics.  The primary goal of this work is to extend the study to the semi-Riemannian situation.  By Theorem  \ref{theorem-p-harmonic} we show how the problem can be reduced to finding an {\it eigenfamily}, i.e. a collection of complex-valued functions which are eigen both with respect to the {\it Laplace-Beltrami operator} $\tau$ and the {\it conformality operator} $\kappa$, on the semi-Riemannian manifolds involved.  The main part of this paper is devoted to the construction of such families on the following classical Lie groups equipped with their natural semi-Riemannian metrics
\begin{equation*}
\GLC n,\,\GLR n,\,\GLH n,\,\SLC n,\,\SLR n,\,\SLH n,
\end{equation*}
\begin{equation*}
\SOC n,\,\SpC n,\,\SpR n,\,\SOs{2n},\,\SUU pq,\,\SOO pq,\,\Spp pq.
\end{equation*}

Our eigenfamilies can also be used to manufacture complex-valued harmonic morphisms on these manifolds as explained in Theorem \ref{theorem-rational}.  They can therefore be seen as an interesting byproduct of the process presented here.

For semi-Riemannian geometry we recommend O'Neill's standard text  \cite{O-Nei}.  Readers not familiar with harmonic morphisms are advised to consult the standard text \cite{Bai-Woo-book} by Baird and Wood, \cite{Fug-1}, \cite{Fug-2}, \cite{Ish} and the regularly updated online bibliography \cite{Gud-bib}. For the details from Lie theory, used in this paper, we refer the reader to \cite{Hel} and \cite{Kna}.


\section{Eigenfunctions and Eigenfamilies }
\label{section-eigenfamilies}


In this paper we manufacture explicit complex-valued proper $p$-harmonic functions and harmonic morphisms on semi-Riemannian manifolds.  For this we apply two different construction techniques which are presented in  Theorem \ref{theorem-p-harmonic} and in Theorem \ref{theorem-rational}, respectively.  The main ingredients for both these recipes are the common eigenfunctions for the tension field $\tau$ and the conformality operator $\kappa$ which we now describe.
\medskip

Let $(M,g)$ be an $m$-dimensional semi-Riemannian manifold and $T^{\cn}M$ be the complexification of the tangent bundle $TM$ of $M$. We extend the metric $g$ to a complex-bilinear form on $T^{\cn}M$.  Then the gradient $\nabla\phi$ of a complex-valued function $\phi:(M,g)\to\cn$ is a section of $T^{\cn}M$.  In this situation, the well-known complex linear {\it Laplace-Beltrami operator} (alt. {\it tension field}) $\tau$ on $(M,g)$ acts locally on $\phi$ as follows
$$
\tau(\phi)=\Div (\nabla \phi)=\sum_{i,j=1}^m\frac{1}{\sqrt{|g|}} \frac{\partial}{\partial x_j}
\Big(g^{ij}\, \sqrt{|g|}\, \frac{\partial \phi}{\partial x_i}\Big).
$$
For two complex-valued functions $\phi,\psi:(M,g)\to\cn$ we have the following well-known fundamental relation
\begin{equation*}\label{equation-basic}
\tau(\phi\cdot \psi)=\tau(\phi)\cdot\psi +2\cdot\kappa(\phi,\psi)+\phi\cdot\tau(\psi),
\end{equation*}
where the complex bilinear {\it conformality operator} $\kappa$ is given by $$\kappa(\phi,\psi)=g(\nabla \phi,\nabla \psi).$$  Locally this satisfies 
$$\kappa(\phi,\psi)=\sum_{i,j=1}^mg^{ij}\cdot\frac{\partial\phi}{\partial x_i}\frac{\partial \psi}{\partial x_j}.$$

\begin{definition}\cite{Gud-Sak-1}\label{definition-eigenfamily}
Let $(M,g)$ be a semi-Riemannian manifold. Then a complex-valued function $\phi:M\to\cn$ is said to be an {\it eigenfunction} if it is eigen both with respect to the Laplace-Beltrami operator $\tau$ and the conformality operator $\kappa$ i.e. there exist complex numbers $\lambda,\mu\in\cn$ such that $$\tau(\phi)=\lambda\cdot\phi\ \ \text{and}\ \ \kappa(\phi,\phi)=\mu\cdot \phi^2.$$	
A set $\E =\{\phi_i:M\to\cn\,|\,i\in I\}$ of complex-valued functions is said to be an {\it eigenfamily} on $M$ if there exist complex numbers $\lambda,\mu\in\cn$ such that for all $\phi,\psi\in\E$ we have 
$$\tau(\phi)=\lambda\cdot\phi\ \ \text{and}\ \ \kappa(\phi,\psi)=\mu\cdot \phi\cdot\psi.$$ 
\end{definition}

The following Theorem \ref{theorem-poly-families} shows that, given an eigenfamily $\E$, one can employ this to produce an extensive collection $\H^d_\E$ of further such objects.  The result is a semi-Riemannian version of Theorem 2.2 proven in \cite{Gha-Gud-2} for the Riemannian case.

\begin{theorem}\label{theorem-poly-families}
Let $(M,g)$ be a semi-Riemannian manifold and the set of complex-valued functions  $$\E=\{\phi_k:M\to\cn\,|\,k=1,2,\dots,n\}$$ 
be an eigenfamily on $M$ i.e. there exist complex numbers $\lambda,\mu\in\cn$ such that for all $\phi,\psi\in\E$ $$\tau(\phi)=\lambda\cdot\phi\ \ \text{and}\ \ \kappa(\phi,\psi)=\mu\cdot\phi\cdot\psi.$$  
Then the set of complex homogeneous polynomials of degree $d$
$$\H^d_\E=\{P:M\to\cn\,|\, P\in\cn[\phi_1,\phi_2,\dots,\phi_n],\ P(\alpha\cdot\phi)=\alpha^d\cdot P(\phi),\ \alpha\in\cn\}$$ 
is an eigenfamily on $M$ such that for all $P,Q\in\H^d_\E$ we have
$$\tau(P)=(d\cdot\lambda+d(d-1)\cdot\mu)\cdot P\ \ \text{and}\ \ \kappa(P,Q)=d^2\cdot\mu\cdot P\cdot Q.$$
\end{theorem}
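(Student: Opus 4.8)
The plan is to reduce both identities to the case of monomials and then to run two short inductions. Since the tension field $\tau$ is complex-linear and the conformality operator $\kappa$ is complex-bilinear, and since every element of $\H^d_\E$ is a $\cn$-linear combination of monomials $\phi_1^{d_1}\cdots\phi_n^{d_n}$ with $d_1+\cdots+d_n=d$ (the homogeneity condition $P(\alpha\cdot\phi)=\alpha^d\cdot P(\phi)$ forcing all terms to have total degree $d$), it is enough to establish that for each such monomial $P$ one has $\tau(P)=(d\lambda+d(d-1)\mu)\,P$, and that for any two monomials $P,Q$ in the functions $\phi_1,\dots,\phi_n$, of degrees $a$ and $b$ respectively, one has $\kappa(P,Q)=ab\,\mu\,P\,Q$. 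Because the $\tau$-eigenvalue $d\lambda+d(d-1)\mu$ does not depend on the multi-index, linearity of $\tau$ then gives the first formula for an arbitrary $P\in\H^d_\E$, and taking $a=b=d$ together with bilinearity of $\kappa$ gives the second.

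The first ingredient is that $\kappa$ is a derivation in each of its two slots: from $\kappa(\phi,\psi)=g(\nabla\phi,\nabla\psi)$ and the Leibniz rule $\nabla(\phi\,\psi)=\phi\,\nabla\psi+\psi\,\nabla\phi$ we get $\kappa(\phi\,\psi,\eta)=\phi\,\kappa(\psi,\eta)+\psi\,\kappa(\phi,\eta)$, and similarly in the second argument by symmetry of $g$. Using this I would first prove, by induction on the degree $b$ of $Q$, that $\kappa(\phi_j,Q)=b\,\mu\,\phi_j\,Q$ for every $j$ and every monomial $Q$: the case $b=1$ is the eigenfamily hypothesis, and writing $Q=\phi_k\,Q'$ with $\deg Q'=b-1$ one finds
\begin{equation*}
\kappa(\phi_j,\phi_k\,Q')=\phi_k\,\kappa(\phi_j,Q')+Q'\,\kappa(\phi_j,\phi_k)=\bigl((b-1)\mu+\mu\bigr)\,\phi_j\,\phi_k\,Q'.
\end{equation*}
A second induction on the degree $a$ of $P$, writing $P=\phi_j\,P'$ and using the derivation property in the first slot together with the identity just proved, then yields $\kappa(P,Q)=ab\,\mu\,P\,Q$ for all monomials.

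For the tension field I would induct on the degree $d$ of the monomial $P$, the case $d=1$ being the hypothesis $\tau(\phi_j)=\lambda\,\phi_j$. For $d\ge 2$ write $P=\phi_j\,P'$ with $\deg P'=d-1$ and apply the fundamental relation to obtain
\begin{equation*}
\tau(\phi_j\,P')=\tau(\phi_j)\,P'+2\,\kappa(\phi_j,P')+\phi_j\,\tau(P').
\end{equation*}
By the hypothesis, the $\kappa$-identity with $a=1$ and $b=d-1$, and the inductive hypothesis for $\tau(P')$, the right-hand side equals
\begin{equation*}
\bigl(\lambda+2(d-1)\mu+(d-1)\lambda+(d-1)(d-2)\mu\bigr)\,\phi_j\,P',
\end{equation*}
and the coefficient simplifies to $d\,\lambda+\bigl(2(d-1)+(d-1)(d-2)\bigr)\mu=d\,\lambda+d(d-1)\,\mu$, which closes the induction.

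I do not expect a serious obstacle here: once the two Leibniz-type rules are available the argument is purely formal. The only points needing care are the bookkeeping of the two nested inductions and the verification that the $\tau$-eigenvalue really is the same for every monomial of a fixed degree, since it is exactly this uniformity that licenses the passage from monomials to arbitrary homogeneous polynomials. One should also keep in mind that $\H^d_\E$ consists of functions on $M$, so distinct polynomial expressions may represent the same function; this causes no trouble because all the identities are derived at the level of the operators $\tau$ and $\kappa$ acting on functions, and therefore hold for any chosen polynomial representative.
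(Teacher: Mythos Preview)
Your proposal is correct and is precisely the standard induction-on-monomials argument that the paper defers to by citing the Riemannian version in \cite{Gha-Gud-2}; the Leibniz property of $\kappa$ together with the product rule $\tau(\phi\psi)=\tau(\phi)\psi+2\kappa(\phi,\psi)+\phi\,\tau(\psi)$ is exactly what drives that proof. There is nothing to add.
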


\begin{proof}
The statement can be proven with exactly the same arguments as its Riemannian counterpart, see Theorem 2.2 in \cite{Gha-Gud-2}.
\end{proof}


\section{Proper $p$-Harmonic Functions}
\label{section-p-harmonic-functions}


In this section we describe a method for manufacturing  complex-valued proper $p$-harmonic functions on semi-Riemannian manifolds. This method was recently introduced for the Riemannian case in \cite{Gud-Sob-1}.  
\medskip

\begin{definition}\label{definition-proper-p-harmonic}
	Let $(M,g)$ be a semi-Riemannian manifold. For a positive integer $p$, the iterated Laplace-Beltrami operator $\tau^p$ is given by
	$$\tau^{0} (\phi)=\phi\ \ \text{and}\ \ \tau^p (\phi)=\tau(\tau^{(p-1)}(\phi)).$$	We say that a complex-valued function $\phi:(M,g)\to\cn$ is
	\begin{enumerate}
		\item[(i)] {\it $p$-harmonic} if $\tau^p (\phi)=0$ and
		\item[(ii)] {\it proper $p$-harmonic} if $\tau^p (\phi)=0$ and $\tau^{(p-1)}(\phi)$ does not vanish identically.
	\end{enumerate}
\end{definition}

The following Theorem \ref{theorem-p-harmonic} can be proven in exactly the same way as its Riemannian counterpart found as Theorem 3.1 in \cite{Gud-Sob-1}.

\begin{theorem}\label{theorem-p-harmonic}
Let $\phi:(M,g)\to\cn$ be a complex-valued function on a semi-Riemannian manifold and $(\lambda,\mu)\in\cn^2\setminus\{0\}$ be such that the tension field $\tau$ and the conformality operator $\kappa$ satisfy 
$$\tau(\phi)=\lambda\cdot \phi\ \ \text{and}\ \ \kappa(\phi,\phi)=\mu\cdot\phi^2.$$
Then for any positive integer $p\in\zn^+$ the non-vanishing function 
$$\Phi_p:W=\{ x\in M \,|\, \phi(x) \not\in (-\infty,0] \}\to\cn$$ with 
$$\Phi_p(x)= 
\begin{cases}
c_1\cdot\log(\phi(x))^{p-1}, & \text{if }\; \mu = 0, \; \lambda \not= 0\\[0.2cm]	
c_1\cdot\log(\phi(x))^{2p-1}+ c_{2}\cdot\log(\phi(x))^{2p-2}, & \text{if }\; \mu \not= 0, \; \lambda = \mu\\[0.2cm]
c_{1}\cdot\phi(x)^{1-\frac\lambda{\mu}}\log(\phi(x))^{p-1} + c_{2}\cdot\log(\phi(x))^{p-1},	& \text{if }\; \mu \not= 0, \; \lambda \not= \mu
\end{cases}
$$ 
is a proper $p$-harmonic function.  Here $c_1,c_2$ are complex coefficients not both zero.
\end{theorem}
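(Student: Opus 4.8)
The plan is to convert the order-$2p$ equation $\tau^p(\Phi_p)=0$ into a constant-coefficient linear ordinary differential equation by a single holomorphic substitution. First I would record the elementary composition identities valid for \emph{any} complex-valued function $\phi$ on $(M,g)$ and any function $F$ holomorphic near the image of $\phi$: from $\nabla(F\circ\phi)=(F'\circ\phi)\,\nabla\phi$ and the product rule $\Div(fX)=f\,\Div X+g(\nabla f,X)$ one obtains
$$
\tau(F\circ\phi)=(F'\circ\phi)\cdot\tau(\phi)+(F''\circ\phi)\cdot\kappa(\phi,\phi),\qquad
\kappa(F\circ\phi,H\circ\phi)=(F'\circ\phi)(H'\circ\phi)\cdot\kappa(\phi,\phi).
$$
These are the identities underlying the multiplicativity relation for $\tau$ quoted in \secref{section-eigenfamilies}, and in particular they apply with $\phi$ replaced by another complex-valued function.

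Next, set $h=\log\phi$, which is well defined and holomorphic on $W$ because $\phi$ avoids the slit $(-\infty,0]$ there. Applying the first identity with $F=\log$ and the hypotheses $\tau(\phi)=\lambda\phi$, $\kappa(\phi,\phi)=\mu\phi^2$ gives
$$
\tau(h)=\lambda-\mu\qquad\text{and}\qquad\kappa(h,h)=\mu .
$$
Applying the identities once more with $h$ as base function and $F=G$ an \emph{entire} function yields $\tau(G\circ h)=(LG)\circ h$, where $L:=\mu\,\tfrac{d^2}{ds^2}+(\lambda-\mu)\,\tfrac{d}{ds}$; since $LG$ is again entire this iterates to $\tau^{k}(G\circ h)=(L^{k}G)\circ h$ for every $k\ge0$. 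Writing the prescribed $\Phi_p$ in each case as $\Phi_p=G\circ h$ — with $G(s)=c_1s^{p-1}$, resp.\ $G(s)=c_1s^{2p-1}+c_2s^{2p-2}$, resp.\ $G(s)=c_1e^{(1-\lambda/\mu)s}s^{p-1}+c_2s^{p-1}$ — the theorem reduces to the two statements $L^pG\equiv0$ and $L^{p-1}G\not\equiv0$, the latter transported back along $h$. For that transport it suffices to note that $\phi$, hence $h$, is non-constant on each component of $W$: a constant value $c$ would force $\lambda c=0$ and $\mu c^2=0$, impossible for $c\ne0$ and $(\lambda,\mu)\ne(0,0)$.

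The rest is ODE bookkeeping. If $\mu=0$, $\lambda\ne0$: $L=\lambda\,\tfrac{d}{ds}$, so $L^pG=0$ while $L^{p-1}G=\lambda^{p-1}(p-1)!\,c_1$, a nonzero constant. If $\mu\ne0$, $\lambda=\mu$: $L=\mu\,\tfrac{d^2}{ds^2}$, so $L^pG=\mu^pG^{(2p)}=0$ since $\deg G\le 2p-1$, whereas $L^{p-1}G=\mu^{p-1}\bigl((2p-1)!\,c_1s+(2p-2)!\,c_2\bigr)\not\equiv0$. If $\mu\ne0$, $\lambda\ne\mu$: factor $L=\mu\,D(D-r)$ with $D=\tfrac{d}{ds}$ and $r=1-\lambda/\mu\ne0$, so $L^p=\mu^pD^p(D-r)^p$ annihilates the span of $\{s^j,\,s^je^{rs}:0\le j\le p-1\}$, hence our $G$; and using $(D-r)\bigl(e^{rs}q(s)\bigr)=e^{rs}q'(s)$ one computes $L^{p-1}G=\mu^{p-1}(p-1)!\bigl(c_1r^{p-1}e^{rs}+c_2(-r)^{p-1}\bigr)$, which pulls back along $h$ to $A\,\phi^{1-\lambda/\mu}+B$ with $(A,B)\ne(0,0)$, and therefore not identically zero since $\phi^{1-\lambda/\mu}$ is non-constant. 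In each case $\Phi_p=G\circ h$ is thus proper $p$-harmonic. I expect the only delicate point to be the properness step — verifying that the $(p-1)$-st iterate does not vanish identically — and the observation that $(\lambda,\mu)\ne(0,0)$ forces $\phi$ to be non-constant is the key input there; everything else is routine.
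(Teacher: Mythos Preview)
Your argument is correct and complete. The paper does not supply its own proof here, stating only that the result can be proven in exactly the same way as its Riemannian counterpart in \cite{Gud-Sob-1}; your substitution $h=\log\phi$, which converts the eigenfunction hypotheses into the constant relations $\tau(h)=\lambda-\mu$ and $\kappa(h,h)=\mu$ and thereby reduces the action of $\tau$ on $G\circ h$ to that of the constant-coefficient operator $L=\mu\,d^2/ds^2+(\lambda-\mu)\,d/ds$ on $G$, is the standard method and presumably the one intended.
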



\section{Complex-Valued Harmonic Morphisms}
\label{section-harmonic-morphisms}


In this section we describe a method for constructing complex-valued harmonic morphisms $\phi:(M,g)\to\cn$ from semi-Riemannian manifolds. This is a special case of the much studied harmonic morphisms $\phi:(M,g)\to (N,h)$ between semi-Riemannian manifolds.  They are maps which pull back local real-valued harmonic functions on $(N,h)$ to harmonic functions on $(M,g)$.  The standard reference for the extensive theory of harmonic morphisms is the book \cite{Bai-Woo-book}, but we also recommend the updated online bibliography \cite{Gud-bib}. 
\medskip

The following result is a direct consequence of Theorem 3 of the paper  \cite{Fug-2} by B. Fuglede.

\begin{proposition}
A function $\phi:(M,g)\to\cn$ from a semi-Riemannian manifold to the standard Euclidean complex plane, is a {\it harmonic morphism} if and only if it is harmonic and horizontally conformal i.e. 
$$\tau(\phi)=0\ \ \text{and}\ \ \kappa(\phi,\phi)=0.$$
\end{proposition}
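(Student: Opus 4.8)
The plan is to deduce the stated characterization directly from Fuglede's Theorem 3 in \cite{Fug-2}, specialized to the target $(N,h)=(\cn,\langle\,,\,\rangle_{\text{eucl}})$, and then to translate the geometric conditions ``harmonic'' and ``horizontally conformal'' into the analytic identities $\tau(\phi)=0$ and $\kappa(\phi,\phi)=0$. Recall that a smooth map $\phi\colon(M,g)\to(N,h)$ between semi-Riemannian manifolds is a harmonic morphism precisely when it is harmonic (its tension field vanishes) and horizontally weakly conformal; Fuglede's result is exactly this equivalence in the semi-Riemannian setting, so invoking it is the first step. What remains is to see what these two conditions mean when $N=\cn\cong\rn^2$ carries the flat Euclidean metric and $\phi=\phi_1+i\,\phi_2$ is written in real and imaginary parts.

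First I would record that, because the Euclidean metric on $\cn$ is flat, the tension field of $\phi$ as a map into $\cn$ has components $\tau(\phi_1)$ and $\tau(\phi_2)$, where on the right $\tau$ is the Laplace--Beltrami operator of $(M,g)$ acting on real-valued functions; hence $\phi$ is harmonic as a map iff $\tau(\phi_1)=\tau(\phi_2)=0$, which is the single complex equation $\tau(\phi)=0$ once $\tau$ is extended $\cn$-linearly as in \secref{section-eigenfamilies}. Second, I would unwind horizontal weak conformality: the differential $d\phi$ sends the horizontal distribution conformally into $T\cn$, which amounts to saying that the pullback of the Euclidean metric, expressed through $\partial\phi_1,\partial\phi_2$, is proportional to $g$ on the horizontal space; concretely this is the pair of real equations $g(\nabla\phi_1,\nabla\phi_1)=g(\nabla\phi_2,\nabla\phi_2)$ and $g(\nabla\phi_1,\nabla\phi_2)=0$. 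Combining these two real equations into one complex equation gives exactly $g(\nabla\phi,\nabla\phi)=\kappa(\phi,\phi)=0$, using the $\cn$-bilinear extension of $g$ to $T^{\cn}M$. This identification of ``$\kappa(\phi,\phi)=0$'' with ``horizontally conformal'' is the one computational point that deserves to be spelled out, and it is the main (though mild) obstacle: one must be a little careful in the semi-Riemannian case that the horizontal distribution and the notion of conformality are interpreted in the weak sense (allowing the horizontal space to be degenerate or the conformality factor to vanish), so that the algebraic identity $\kappa(\phi,\phi)=0$ captures it without extra nondegeneracy hypotheses.

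Finally I would assemble the two equivalences: $\phi$ is a harmonic morphism $\iff$ $\phi$ is harmonic and horizontally conformal (Fuglede) $\iff$ $\tau(\phi)=0$ and $\kappa(\phi,\phi)=0$. Since the paper only needs this clean analytic reformulation and has already set up $\tau$ and $\kappa$ as $\cn$-linear, respectively $\cn$-bilinear, operators in \secref{section-eigenfamilies}, the proof is short: quote Fuglede, then note that for the flat target the geometric conditions are componentwise $\tau(\phi_j)=0$ and the conformality relations above, and observe these are equivalent to the stated complex identities. I expect no serious difficulty beyond the bookkeeping of real versus complex components and the weak-conformality convention just mentioned.
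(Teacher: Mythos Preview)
Your proposal is correct and mirrors the paper's approach exactly: the paper does not give a proof at all but simply states that the proposition is a direct consequence of Theorem~3 in \cite{Fug-2}, which is precisely what you invoke and then unpack. Your elaboration of the translation from Fuglede's geometric conditions to the complex identities $\tau(\phi)=0$ and $\kappa(\phi,\phi)=0$ is the standard one and is more detailed than anything the paper provides.
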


The following Theorem \ref{theorem-main-motivation} is a semi-Riemannian version of Theorem 5.2 of \cite{Bai-Eel}, see also \cite{Bai-Woo-book}.  It gives the theory of complex-valued harmonic morphisms a strong geometric flavour and provides a useful tool for the construction of minimal submanifolds of codimension two.  This is our main motivation for studying these maps.

\begin{theorem}\label{theorem-main-motivation}
Let $\phi:(M,g)\to\cn$ be a horizontally conformal map from a semi-Riemannian manifold to the standard Euclidean complex plane.  Then $\phi$ is harmonic if and only if its fibres are minimal at regular points of $\phi$.
\end{theorem}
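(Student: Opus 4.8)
The plan is to compute the tension field $\tau(\phi)$ in terms of geometric data of the fibres and the horizontal distribution, and then read off the equivalence. I would start from the standard decomposition $TM = \mathcal{H}\oplus\mathcal{V}$ into the horizontal and vertical distributions determined by the regular points of $\phi$, where $\mathcal{V}=\ker d\phi$ and $\mathcal{H}=\mathcal{V}^{\perp}$ with respect to $g$; at a regular point horizontal conformality guarantees that $\mathcal{H}$ is non-degenerate, so this splitting is well defined. Write $\lambda^2$ for the (complex) dilation, i.e. $\kappa(\phi,\phi)=g(\nabla\phi,\nabla\phi)$ expressed in terms of the conformality factor on $\mathcal H$. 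The key identity to establish is the \emph{fundamental equation}
\begin{equation*}
\tau(\phi) = -\,d\phi\big(\operatorname{trace}_{\mathcal V}(\nabla^{\mathcal M})\big) = d\phi\big(\mu^{\mathcal V}\big),
\end{equation*}
where $\mu^{\mathcal V}$ is the mean curvature vector field of the fibres; more precisely, one shows that the horizontal part of $\nabla\nabla\phi$ traced over $\mathcal H$ vanishes by horizontal conformality (this is exactly the content of the computation in \cite{Bai-Eel}, adapted to the semi-Riemannian setting), so that $\tau(\phi)$ equals the trace over the vertical directions of the Hessian of $\phi$, and the latter, since $\phi$ is constant along fibres, reduces to $d\phi$ applied to the mean curvature vector of $\mathcal V$.

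Granting this identity, the equivalence is immediate: if the fibres are minimal then $\mu^{\mathcal V}=0$ at regular points, hence $\tau(\phi)=0$ there, and since the regular set is open and dense (or at worst, one argues on the open regular set and notes $\tau(\phi)$ is continuous and vanishes on a dense set) one concludes $\tau(\phi)\equiv 0$. Conversely, if $\tau(\phi)=0$ then $d\phi(\mu^{\mathcal V})=0$; but $\mu^{\mathcal V}$ is a horizontal vector field and $d\phi$ restricted to $\mathcal H$ is injective at regular points (again by horizontal conformality and non-degeneracy of $\mathcal H$), so $\mu^{\mathcal V}=0$ at every regular point, i.e. the fibres are minimal there.

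The main obstacle, and the place where genuine care beyond the Riemannian case is needed, is the non-degeneracy issue: in semi-Riemannian geometry the orthogonal complement $\mathcal V^{\perp}$ of a non-degenerate subbundle need not itself be non-degenerate unless $\mathcal V$ is, and the horizontal conformality hypothesis must be used precisely to ensure that at regular points the relevant restrictions of $g$ are non-degenerate so that the splitting $TM=\mathcal H\oplus\mathcal V$ and the injectivity of $d\phi|_{\mathcal H}$ both hold. I would handle this by invoking the local normal form for horizontally conformal submersions from \cite{Fug-2}, which provides, near a regular point, coordinates in which $\phi$ is a genuine semi-Riemannian submersion onto $\cn$ up to a conformal factor; the fundamental tension-field equation above is then the standard O'Neill-type computation in those coordinates, and the rest of the argument goes through verbatim as in the Riemannian case \cite{Bai-Eel}, \cite{Bai-Woo-book}.
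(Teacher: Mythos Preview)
The paper does not actually prove this theorem: it records it as ``a semi-Riemannian version of Theorem 5.2 of \cite{Bai-Eel}'' and refers the reader to \cite{Bai-Woo-book}. Your proposal is precisely the standard Baird--Eells argument from those references, transplanted to the semi-Riemannian setting, so you are aligned with what the paper invokes rather than departing from it.

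One technical point to sharpen. You write that ``the horizontal part of $\nabla\nabla\phi$ traced over $\mathcal H$ vanishes by horizontal conformality''; this is not quite the mechanism. For a horizontally conformal map $\phi:(M^m,g)\to(N^n,h)$ with dilation $\lambda$, the Baird--Eells fundamental equation at a regular point reads (up to conventions)
\[
\tau(\phi)\;=\;-(n-2)\,d\phi\bigl(\operatorname{grad}_{\mathcal H}\ln\lambda\bigr)\;-\;(m-n)\,d\phi\bigl(\mu^{\mathcal V}\bigr),
\]
and the horizontal contribution vanishes \emph{because $n=2$}, not merely because of horizontal conformality. Your displayed identity should therefore carry the factor $(m-2)$ in front of the mean-curvature term; since $m\ge 2$ and the case $m=2$ is degenerate (fibres are points), this factor is nonzero in the range of interest and the equivalence follows as you say. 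Apart from this, your handling of the non-degeneracy issue via \cite{Fug-2} is appropriate and is exactly the extra care the semi-Riemannian case demands.
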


The next result shows that eigenfamilies can be utilised to manufacture a variety of harmonic morphisms.

\begin{theorem}\cite{Gud-Sak-1}\label{theorem-rational}
	Let $(M,g)$ be a semi-Riemannian manifold and 
	$$\E =\{\phi_k:M\to\cn\,|\,k=1,2,\dots,n\}$$ 
	be an eigenfamily of complex-valued functions on $M$. If $P,Q:\cn^n\to\cn$ are linearily independent homogeneous polynomials of the same positive degree then the quotient
	$$\frac{P(\phi_1,\dots ,\phi_n)}{Q(\phi_1,\dots ,\phi_n)}$$ 
	is a non-constant harmonic morphism on the open and dense subset
	$$\{p\in M\,|\, Q(\phi_1(p),\dots ,\phi_n(p))\neq 0\}.$$
\end{theorem}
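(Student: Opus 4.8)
The plan is to reduce the statement to a homogeneity computation using the two operators $\tau$ and $\kappa$, exploiting the key fact that a quotient $R = P/Q$ of homogeneous polynomials of the same degree $d$ is invariant under the scaling $\phi \mapsto \alpha\cdot\phi$, hence can be written in terms of $d$-th degree homogeneous polynomials in the $\phi_k$. First I would invoke Theorem \ref{theorem-poly-families}: the sets $\H^d_\E$ of homogeneous polynomials of degree $d$ in $\phi_1,\dots,\phi_n$ form an eigenfamily with eigenvalue pair $(d\lambda + d(d-1)\mu,\, d^2\mu)$. So on the open dense set $\{Q \neq 0\}$ we are looking at $R = P/Q$ with $P, Q \in \H^d_\E$, and we know $\tau(P) = \Lambda P$, $\tau(Q) = \Lambda Q$, $\kappa(P,Q) = M\cdot PQ$ for $\Lambda = d\lambda + d(d-1)\mu$ and $M = d^2\mu$, and similarly $\kappa(P,P) = M P^2$, $\kappa(Q,Q) = M Q^2$.

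Next I would compute $\tau(R)$ and $\kappa(R,R)$ directly from the product/quotient rules for these operators. For $\kappa$, using bilinearity and the Leibniz-type identity $\kappa(\phi\psi,\eta) = \phi\,\kappa(\psi,\eta) + \psi\,\kappa(\phi,\eta)$ (which follows from $\kappa(\phi,\psi) = g(\nabla\phi,\nabla\psi)$), one gets $\kappa(1/Q, \eta) = -Q^{-2}\kappa(Q,\eta)$, and then
$$\kappa(R,R) = \kappa\!\left(\tfrac{P}{Q},\tfrac{P}{Q}\right) = \tfrac{1}{Q^2}\kappa(P,P) - \tfrac{2P}{Q^3}\kappa(P,Q) + \tfrac{P^2}{Q^4}\kappa(Q,Q) = \tfrac{M P^2}{Q^2} - \tfrac{2M P^2}{Q^2} + \tfrac{M P^2}{Q^2} = 0.$$
So $R$ is automatically horizontally conformal wherever $Q \neq 0$. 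For $\tau$, I would use $\tau(\phi\psi) = \tau(\phi)\psi + 2\kappa(\phi,\psi) + \phi\,\tau(\psi)$ to first obtain $\tau(1/Q) = -Q^{-2}\tau(Q) + 2Q^{-3}\kappa(Q,Q) = -\Lambda/Q + 2M/Q$, and then expand $\tau(R) = \tau(P\cdot\tfrac1Q) = \tau(P)\tfrac1Q + 2\kappa(P,\tfrac1Q) + P\,\tau(\tfrac1Q)$; substituting $\kappa(P,1/Q) = -Q^{-2}\kappa(P,Q) = -MP/Q^2$ gives $\tau(R) = \Lambda P/Q - 2MP/Q^2\cdot Q + P(-\Lambda + 2M)/Q = \Lambda R - 2M R - \Lambda R + 2M R = 0$. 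Hence $R$ is harmonic. By the Proposition characterizing complex-valued harmonic morphisms, $R$ is a harmonic morphism on $\{Q \neq 0\}$.

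Finally I would address non-constancy: if $R = P/Q$ were constant, then $P$ and $Q$ would be proportional as functions on $M$, contradicting linear independence of $P$ and $Q$ as polynomials — but this needs the $\phi_k$ to be "generic enough" that linear independence of the polynomials forces functional non-proportionality. I expect this to be the main (and only genuine) obstacle: strictly speaking the statement as phrased is slightly informal, and one should either add the hypothesis that the $\phi_k$ are algebraically independent (or at least that no nontrivial homogeneous polynomial relation of degree $d$ holds among them), or note that this is the intended reading. Granting that, $P(\phi)/Q(\phi)$ is non-constant on the stated open dense set, completing the proof. The density of $\{Q \neq 0\}$ follows since $Q$ is a nonzero polynomial in functions that are assumed not to be trivially related, so its zero set is nowhere dense.
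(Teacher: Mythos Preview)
The paper does not actually supply a proof of this theorem: it is stated with a citation to \cite{Gud-Sak-1} and no proof environment follows. So there is no in-paper argument to compare against.

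Your computation is essentially the standard one and is correct. Invoking Theorem~\ref{theorem-poly-families} to pass from the $\phi_k$ to the pair $P,Q\in\H^d_\E$ with common eigenvalues $(\Lambda,M)$, and then running the quotient/product identities for $\tau$ and $\kappa$, is exactly how this result is established in the literature; your cancellations for $\kappa(R,R)=0$ and $\tau(R)=0$ are accurate. The only genuine caveat is the one you already flag: the conclusion that $R=P/Q$ is \emph{non-constant} and that $\{Q\neq 0\}$ is \emph{open and dense} does not follow from the hypotheses as literally stated. Linear independence of $P$ and $Q$ as polynomials in $\cn[x_1,\dots,x_n]$ does not by itself prevent $P(\phi_1,\dots,\phi_n)$ and $Q(\phi_1,\dots,\phi_n)$ from being proportional (or $Q(\phi)$ from vanishing identically) unless one assumes, e.g., that the $\phi_k$ satisfy no nontrivial homogeneous polynomial relation of degree $d$. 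This is the intended reading in \cite{Gud-Sak-1} and in the present paper, and you are right to isolate it as the only soft spot; with that implicit assumption granted, your argument is complete.
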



\section{The Semi-Riemannian Lie Group $\GLC n$}
\label{section-GLC}


The complex general linear group $\GLC n$ of invertible $n\times n$ matrices is given by 
$$\GLC n=\{ z\in\cn^{n\times n}\,|\, \det z\neq 0\}.$$
Its Lie algebra $\glc n$ of left-invariant vector fields on $\GLC n$ can be identified with $\cn^{n\times n}$ i.e. the linear space of complex $n\times n$ matrices.  We equip $\GLC n$ with its natural semi-Riemann\-ian metric $g$ induced by the semi-Euclidean inner product $\glc n\times\glc n\to\rn$ on $\glc n$ satisfying
$$g(Z,W)\mapsto -\,\Re\trace (Z\cdot W).$$
For $\glc n$ we then have the orthogonal decomposition 
$$\glc n=\glcp n\oplus\glcm n,$$
where $$\glcp n=\u n=\{Z\in\glc n\,|\, Z+\bar Z^t=0\}$$
is the set of skew-Hermitian matrices and 
$$\glcm n=i\cdot\u n=\{Z\in\glc n\,|\, Z-\bar Z^t=0\}$$ 
the set of the Hermitian ones.  Here $\u n$ is the Lie algebra of the unitary group $\U n$ which is the maximal compact subgroup of $\GLC n$ satisfying 
$$\U n=\{z\in\GLC n\,|\, z\, \bar z^t=I_n\}.$$

For $1\le r,s\le n$, we shall by $E_{rs}\in\rn^{n\times n}$ denote the matrix given by
$$
(E_{rs})_{\alpha\beta}=\delta_{r \alpha}\delta_{s\beta}
$$
and for $r<s$ let $X_{rs},Y_{rs}$ be the symmetric and skew-symmetric matrices
$$
X_{rs}=\frac 1{\sqrt 2}(E_{rs}+E_{sr}),\ \ Y_{rs}=\frac 1{\sqrt 2}(E_{rs}-E_{sr}),
$$ 
respectively.  Further, let $D_r$ be the diagonal elements with $D_t=E_{tt}$.
By $\B^+$ we denote the orthonormal basis for the Lie algebra $\u n$ satisfying 
$$\B^+=\{Y_{rs},i\, X_{rs}\,|\, 1\le r<s\le n\}\cup\{i\,D_{t}\,|\, t=1,2,\dots,n\}$$
and $\B^-=i\,\B^+$. Then $\B=\B^+\cup \B^-$ is an orthonormal basis for $\glc n$ such that $g(Z,Z)=1$ if $Z\in\B^+$ and $g(Z,Z)=-1$ for $Z\in\B^-$. For later use we define the two standard matrices $J_{n}$ and $I_{pq}$ by 
$$
J_{n}=
\begin{bmatrix}
0 & I_n\\
-I_n & 0
\end{bmatrix}\ \ \text{and}\ \ 
I_{pq}=\begin{bmatrix}
-I_p & 0\\
0 & I_q
\end{bmatrix}.
$$

Let $G$ be a subgroup of $\GLC n$ with Lie algebra $\g$ inheriting the induced left-invariant semi-Riemannian metric from $g$.  Then employing the Koszul formula for the Levi-Civita connection $\nabla$
on $(G,g)$, we see that for all $Z,W\in\g$ we have
\begin{eqnarray*}
	g(\nab ZZ,W)&=&g([W,Z],Z)\\
	&=&-\Re\trace (WZ-ZW)Z\\
	&=&-\Re\trace W(ZZ-ZZ)\\
	&=&0.
\end{eqnarray*}
If $Z\in\g$ is a left-invariant vector field on $G$ and $\phi:U\to\cn$ is a local complex-valued function on $G$ then the first and second order derivatives satisfy
\begin{equation*}\label{equation-diff-Z}
Z(\phi)(p)=\frac {d}{ds}\bigl(\phi(p\cdot\exp(sZ))\bigr)\Big|_{s=0},
\end{equation*}
\begin{equation*}\label{equation-diff-ZZ}
Z^2(\phi)(p)=\frac {d^2}{ds^2}\bigr(\phi(p\cdot\exp(sZ))\bigr)\Big|_{s=0}.
\end{equation*}
This implies that the tension field $\tau$ and the conformality operator $\kappa$ on $G$ fulfill 
\begin{eqnarray*}\label{equation-tau}
\tau(\phi)
&=&\sum_{Z\in\B_\g}g(Z,Z)\cdot \bigl(Z^2(\phi)-\nab ZZ(\phi)\bigr)\\
&=&\sum_{Z\in\B_\g}g(Z,Z)\cdot Z^2(\phi)\\ \\
\kappa(\phi,\psi)&=&\sum_{Z\in\B_\g}g(Z,Z)\cdot Z(\phi)\cdot Z(\psi),
\end{eqnarray*}
where $\B_\g$ is any orthonormal basis for the Lie algebra $\g$.
\smallskip

The restriction of the semi-Riemannian metric $g$ on $\GLC n$ to its maximal compact subgroup $\U n$ is its standard Riemannian metric.  For this we have the following result, see Lemma 5.1 of \cite{Gud-Sak-1}.

\begin{lemma}\label{lemma-U}
Let $z_{j\alpha}:\U n\to\mathbb{C}$ be the complex-valued matrix elements of the standard representation of the unitary group $\U n$. Then the tension field $\tau$ and the conformality operator $\kappa$ on $\U n$ satisfy the following relations
\begin{eqnarray*}
\tau(z_{j\alpha})&=&-\,n\cdot z_{j\alpha},\\
\kappa(z_{j\alpha},z_{k\beta})&=&-\,z_{k\alpha}\cdot z_{j\beta}. 
\end{eqnarray*}
\end{lemma}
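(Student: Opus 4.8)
The plan is to compute $\tau$ and $\kappa$ directly from the formulas
$\tau(\phi)=\sum_{Z\in\B^+}Z^2(\phi)$ and $\kappa(\phi,\psi)=\sum_{Z\in\B^+}Z(\phi)\cdot Z(\psi)$, since on the compact group $\U n$ the metric is positive definite and $\B^+$ is an orthonormal basis for $\u n$. The key computational input is the first-order derivative formula
$Z(z_{j\alpha})(p)=\tfrac{d}{ds}\big(z_{j\alpha}(p\exp(sZ))\big)\big|_{s=0}=(p\,Z)_{j\alpha}=\sum_{\gamma}p_{j\gamma}Z_{\gamma\alpha}$,
and its iterate $Z^2(z_{j\alpha})(p)=(p\,Z^2)_{j\alpha}$. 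So everything reduces to evaluating the matrix sums $\sum_{Z\in\B^+}Z^2$ and, for $\kappa$, the bilinear expression $\sum_{Z\in\B^+}(pZ)_{j\alpha}(pZ)_{k\beta}=\sum_{\gamma,\delta}p_{j\gamma}p_{k\delta}\big(\sum_{Z\in\B^+}Z_{\gamma\alpha}Z_{\delta\beta}\big)$.

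First I would record the basis $\B^+=\{Y_{rs},\,iX_{rs}\mid r<s\}\cup\{iD_t\mid t=1,\dots,n\}$ and compute $\sum_{Z\in\B^+}Z\otimes Z$ as an element of $\glc n\otimes\glc n$; equivalently, compute $C(\gamma\alpha,\delta\beta):=\sum_{Z\in\B^+}Z_{\gamma\alpha}Z_{\delta\beta}$. Using $Y_{rs}=\tfrac1{\sqrt2}(E_{rs}-E_{sr})$, $X_{rs}=\tfrac1{\sqrt2}(E_{rs}+E_{sr})$, one finds $Y_{rs}\otimes Y_{rs}+(iX_{rs})\otimes(iX_{rs})=\tfrac12\big[(E_{rs}-E_{sr})\otimes(E_{rs}-E_{sr})-(E_{rs}+E_{sr})\otimes(E_{rs}+E_{sr})\big]=-\,(E_{rs}\otimes E_{sr}+E_{sr}\otimes E_{rs})$, while $(iD_t)\otimes(iD_t)=-E_{tt}\otimes E_{tt}$. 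Summing over $r<s$ and over $t$ gives $\sum_{Z\in\B^+}Z\otimes Z=-\sum_{r,s}E_{rs}\otimes E_{sr}$, i.e. $C(\gamma\alpha,\delta\beta)=-\,\delta_{\alpha\delta}\,\delta_{\gamma\beta}$. Contracting the two tensor factors (set the first output index of each $E$ against the second of the other, which is exactly what $Z^2$ does) yields $\sum_{Z\in\B^+}Z^2=-\sum_{r,s}E_{rs}E_{sr}=-\sum_{r}(\text{number of }s)\,E_{rr}=-\,n\,I_n$.

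Plugging these into the derivative formulas: $\tau(z_{j\alpha})(p)=(p\cdot\sum_{Z}Z^2)_{j\alpha}=(p\cdot(-nI_n))_{j\alpha}=-\,n\,p_{j\alpha}=-\,n\,z_{j\alpha}(p)$, giving the first relation. For the second, $\kappa(z_{j\alpha},z_{k\beta})(p)=\sum_{\gamma,\delta}p_{j\gamma}p_{k\delta}\,C(\gamma\alpha,\delta\beta)=-\sum_{\gamma,\delta}p_{j\gamma}p_{k\delta}\,\delta_{\alpha\delta}\delta_{\gamma\beta}=-\,p_{j\beta}\,p_{k\alpha}=-\,z_{k\alpha}(p)\cdot z_{j\beta}(p)$, which is the stated formula. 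I do not expect a genuine obstacle here; the only thing requiring care is the bookkeeping of indices in the tensor computation and the sign from $g(Z,Z)=1$ on $\B^+$ (all terms in $\B^+$ are skew-Hermitian, so each $Z\otimes Z$ contributes with a minus sign from the two factors of $i$ or from $E_{rs}-E_{sr}$ squaring to something negative). One may alternatively quote this as a special case of the ambient $\GLC n$ computation, but doing it intrinsically on $\U n$ as above is self-contained and matches the reference Lemma 5.1 of \cite{Gud-Sak-1}.
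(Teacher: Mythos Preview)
Your computation is correct: the tensor identity $\sum_{Z\in\B^+}Z\otimes Z=-\sum_{r,s}E_{rs}\otimes E_{sr}$ is exactly what is needed, and both the contraction giving $\sum_Z Z^2=-nI_n$ and the index bookkeeping for $\kappa$ are carried out accurately. Note that the paper itself does not supply a proof of this lemma but simply cites Lemma~5.1 of \cite{Gud-Sak-1}; your self-contained direct computation using the explicit orthonormal basis $\B^+$ is precisely the kind of argument one finds there, so there is nothing further to compare.
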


With this at hand we yield the following statement.

\begin{proposition}\label{proposition-GLC}
Let $z_{j\alpha}:\GLC n\to\mathbb{C}$ be the complex-valued matrix elements of the standard representation of the general linear group $\GLC n$. Then the tension field $\tau$ and the conformality operator $\kappa$ on $\GLC n$ ful\-fill the following relations
\begin{eqnarray*}
\tau(z_{j\alpha})&=&-2\,n\cdot z_{j\alpha},\\
\kappa(z_{j\alpha},z_{k\beta})&=&-\,2\cdot z_{k\alpha}\cdot z_{j\beta}. 
\end{eqnarray*}
\end{proposition}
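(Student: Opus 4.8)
The plan is to deduce the two identities on $\GLC n$ from their counterparts on the maximal compact subgroup $\U n$ in Lemma \ref{lemma-U}, exploiting that the orthonormal basis of $\glc n$ fixed above has the shape $\B=\B^+\cup\B^-$ with $\B^-=i\,\B^+$, the two halves carrying opposite signs for $g$.

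First I would make the action of a left-invariant vector field on a matrix element explicit. For $p\in\GLC n$ and $Z\in\glc n=\cn^{n\times n}$ one has $z_{j\alpha}(p\cdot\exp(sZ))=\sum_{\gamma}p_{j\gamma}(\exp(sZ))_{\gamma\alpha}$, hence $Z(z_{j\alpha})(p)=(p\,Z)_{j\alpha}$ and $Z^2(z_{j\alpha})(p)=(p\,Z^2)_{j\alpha}$. Feeding these into the formulas $\tau(\phi)=\sum_{Z\in\B}g(Z,Z)\,Z^2(\phi)$ and $\kappa(\phi,\psi)=\sum_{Z\in\B}g(Z,Z)\,Z(\phi)\,Z(\psi)$ (valid here because the connection terms vanish, as established above) turns $\tau(z_{j\alpha})(p)$ into $(p\,C)_{j\alpha}$ with the fixed matrix $C:=\sum_{Z\in\B}g(Z,Z)\,Z^2\in\cn^{n\times n}$, and $\kappa(z_{j\alpha},z_{k\beta})(p)$ into $\sum_{\gamma,\delta}p_{j\gamma}p_{k\delta}\,T_{\gamma\delta}$ with $T_{\gamma\delta}:=\sum_{Z\in\B}g(Z,Z)\,Z_{\gamma\alpha}Z_{\delta\beta}$ (which also depends on the fixed indices $\alpha,\beta$).

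The second step is the signature cancellation. For $Z\in\B^+$ we have $g(Z,Z)=1$, whereas $g(iZ,iZ)=-\Re\trace((iZ)^2)=-1$, while $(iZ)^2=-Z^2$ and $(iZ)_{\gamma\alpha}(iZ)_{\delta\beta}=-Z_{\gamma\alpha}Z_{\delta\beta}$. Thus in each sum over $\B=\B^+\cup i\,\B^+$ the member $iZ$ reproduces the contribution of $Z$, so $C=2\sum_{Z\in\B^+}Z^2$ and $T_{\gamma\delta}=2\sum_{Z\in\B^+}Z_{\gamma\alpha}Z_{\delta\beta}$; that is, both are exactly twice the quantities entering the computation on $\U n$.

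Finally I would evaluate those two Casimir data via Lemma \ref{lemma-U}. Since $\sum_{Z\in\B^+}Z^2$ is a fixed element of $\cn^{n\times n}$ and, for every $q\in\U n$, $\tau^{\U n}(z_{j\alpha})(q)=\big(q\sum_{Z\in\B^+}Z^2\big)_{j\alpha}=-n\,q_{j\alpha}$, the choice $q=I_n$ gives $\sum_{Z\in\B^+}Z^2=-n\,I_n$, hence $C=-2n\,I_n$; likewise the $\kappa$-relation of Lemma \ref{lemma-U} at $q=I_n$ forces $\sum_{Z\in\B^+}Z_{\gamma\alpha}Z_{\delta\beta}=-\delta_{\gamma\beta}\,\delta_{\delta\alpha}$, hence $T_{\gamma\delta}=-2\,\delta_{\gamma\beta}\,\delta_{\delta\alpha}$. (Alternatively both can be computed directly from the explicit basis $\B^+$, bypassing Lemma \ref{lemma-U}.) Substituting back gives $\tau(z_{j\alpha})(p)=(p\,C)_{j\alpha}=-2n\,z_{j\alpha}(p)$ and $\kappa(z_{j\alpha},z_{k\beta})(p)=\sum_{\gamma,\delta}p_{j\gamma}p_{k\delta}T_{\gamma\delta}=-2\,p_{j\beta}p_{k\alpha}=-2\,z_{k\alpha}(p)\,z_{j\beta}(p)$, which is the assertion. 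No step presents a real obstacle; the one place to be careful is this last transfer, where one must observe that the Casimir element and tensor are independent of the base point, so that an identity known only on $\U n$ pins them down outright, after which the rest is bookkeeping with the two minus signs supplied by $i$.
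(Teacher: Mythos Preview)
Your argument is correct and follows precisely the route the paper takes: the paper's proof is the one-line remark that the result is ``an immediate consequence of Lemma~\ref{lemma-U} and how the semi-Riemannian metric is defined on the complex Lie algebra $\glc n=\u n\oplus i\cdot\u n$'', and you have simply unpacked this---the doubling coming from $\B^-=i\,\B^+$ together with the signature flip, and the identification of the resulting Casimir data via Lemma~\ref{lemma-U}. Your device of evaluating at $q=I_n$ to read off $\sum_{Z\in\B^+}Z^2$ and $\sum_{Z\in\B^+}Z_{\gamma\alpha}Z_{\delta\beta}$ is a clean way to make the transfer rigorous.
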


\begin{proof}
This is an immediate consequence of Lemma \ref{lemma-U} and how the semi-Riemannian metric is defined on the complex Lie algebra $$\glc n=\u n\oplus i\cdot\u n.$$
\end{proof}

\begin{theorem}\label{theorem-eigenfamily-GLC}
Let $v$ be a non-zero element of $\mathbb{C}^n$. Then the complex  $n$-dimensional vector space
$$
\E_v=\{\phi_a:\GLC n\to\mathbb{C}\,|\,\phi_a(z)=\trace(v^taz^t),\ a\in\mathbb{C}^n\}
$$
is an eigenfamily on $\GLC n$ such that for all $\phi,\psi\in\E_v$ we have 
$$\tau(\phi)=-\,2\,n\cdot\phi\ \ \text{and}\ \  \kappa(\phi,\psi)=-\,2\cdot\phi\cdot\psi.$$	
\end{theorem}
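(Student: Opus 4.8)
The plan is to reduce everything to Proposition \ref{proposition-GLC} by observing that each $\phi_a$ is just a complex-linear combination of the matrix-element functions $z_{j\alpha}$. First I would write out the trace: since $v^ta$ is the rank-one matrix with entries $v_j a_\alpha$,
$$
\phi_a(z)=\trace(v^t a\, z^t)=\sum_{j,\alpha=1}^n v_j\, a_\alpha\, z_{j\alpha}.
$$
From this the assignment $a\mapsto\phi_a$ is visibly complex-linear, and it is injective because $v\neq 0$ and the $z_{j\alpha}$ are linearly independent coordinate functions; this already yields that $\E_v$ is a complex $n$-dimensional vector space. It then remains to evaluate $\tau$ and $\kappa$ on elements of $\E_v$.

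The tension field is the easy half: since $\tau$ is complex-linear, Proposition \ref{proposition-GLC} gives at once
$$
\tau(\phi_a)=\sum_{j,\alpha}v_j a_\alpha\,\tau(z_{j\alpha})=-2n\sum_{j,\alpha}v_j a_\alpha\, z_{j\alpha}=-2n\cdot\phi_a .
$$

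The conformality operator is where the (mild) work lies, and the rank-one shape of $v^ta$ is exactly what makes it come out right. For $\phi_a,\phi_b\in\E_v$ I would use complex-bilinearity of $\kappa$ together with $\kappa(z_{j\alpha},z_{k\beta})=-2\,z_{k\alpha}z_{j\beta}$ from Proposition \ref{proposition-GLC}:
$$
\kappa(\phi_a,\phi_b)=\sum_{j,\alpha,k,\beta}v_j a_\alpha v_k b_\beta\,\kappa(z_{j\alpha},z_{k\beta})=-2\sum_{j,\alpha,k,\beta}v_j a_\alpha v_k b_\beta\, z_{k\alpha}z_{j\beta}.
$$
The point is that the summation index $\alpha$ (carrying $a$) sits in the factor $z_{k\alpha}$ while $\beta$ (carrying $b$) sits in $z_{j\beta}$, so the quadruple sum splits as the product $\big(\sum_{k,\alpha}v_k a_\alpha z_{k\alpha}\big)\big(\sum_{j,\beta}v_j b_\beta z_{j\beta}\big)=\phi_a\cdot\phi_b$, whence $\kappa(\phi_a,\phi_b)=-2\cdot\phi_a\cdot\phi_b$. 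Thus $\E_v$ is an eigenfamily with $\lambda=-2n$ and $\mu=-2$.

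I do not expect a genuine obstacle: all the analytic content is already contained in Lemma \ref{lemma-U} and Proposition \ref{proposition-GLC}, and what remains is the elementary observation that the linear functionals $z\mapsto\trace(v^t a z^t)$ have coefficient matrix of rank one — precisely the condition under which the bilinear form $(z_{j\alpha},z_{k\beta})\mapsto-2\,z_{k\alpha}z_{j\beta}$ restricts on the span of the $z_{j\alpha}$ to $(\phi,\psi)\mapsto-2\,\phi\psi$. The only place to be careful is the index bookkeeping in the last displayed identity.
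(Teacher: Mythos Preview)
Your argument is correct and is precisely the computation the paper has in mind: the paper's own proof simply points to the Riemannian analogue in \cite{Gud-Sak-1}, which is carried out by exactly this rank-one expansion and appeal to the matrix-element identities of Proposition \ref{proposition-GLC}. If anything, you have written out more detail than the paper does.
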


\begin{proof}
This result can be proven in exactly the same way as Theorem 5.2 presented in  \cite{Gud-Sak-1}.
\end{proof}


\section{The Semi-Riemannian Lie Group $\GLR n$}
\label{section-GLR}


The real general linear group $\GLR n$ of invertible $n\times n$ matrices is given by 
$$\GLR n=\{ x\in\rn^{n\times n}\,|\, \det x\neq 0\}.$$
The Lie algebra $\glc n$ of the complex general linear group $\GLC n$ has a natural orthogonal decomposition 
$$\glc n=\glr n\oplus i\cdot\glr n,$$ 
where $\glr n$ is the Lie algebra of $\GLR n$ consisting of the real $n\times n$ matrices.

\begin{proposition}\label{proposition-GLR}
Let $x_{j\alpha}:\GLR n\to\rn$ be the real-valued matrix elements of the standard representation of the general linear group $\GLR n$. Then the tension field $\tau$ and the conformality operator $\kappa$ on $\GLR n$ satisfy the following relations
\begin{eqnarray*}
\tau(x_{j\alpha})&=&-\,n\cdot x_{j\alpha},\\
\kappa(x_{j\alpha},x_{k\beta})&=&-\,x_{k\alpha}\cdot x_{j\beta}. 
\end{eqnarray*}
\end{proposition}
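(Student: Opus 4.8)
The plan is to reduce the computation on $\GLR n$ to the known computation on the maximal compact subgroup $\O n$, exactly as Proposition \ref{proposition-GLC} was reduced to Lemma \ref{lemma-U}. First I would record that the natural semi-Riemannian metric on $\GLR n$ is induced by the inner product $g(X,Y)=-\trace(X\cdot Y)$ on $\glr n=\rn^{n\times n}$, which has signature governed by the orthogonal decomposition $\glr n=\o n\oplus\Sym_0$, where $\o n$ is the skew-symmetric part (on which $g$ is positive definite) and $\Sym_0$ the symmetric part (on which $g$ is negative definite). One picks the orthonormal basis $\{Y_{rs}\mid r<s\}$ for $\o n$ together with $\{X_{rs}\mid r<s\}\cup\{D_t\mid t=1,\dots,n\}$ for the symmetric part, with $g(Z,Z)=+1$ on the first set and $g(Z,Z)=-1$ on the second. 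Using the formulas for $\tau$ and $\kappa$ on a subgroup of $\GLC n$ already derived in the excerpt, namely $\tau(\phi)=\sum_{Z\in\B_\g}g(Z,Z)\,Z^2(\phi)$ and $\kappa(\phi,\psi)=\sum_{Z\in\B_\g}g(Z,Z)\,Z(\phi)\,Z(\psi)$, both operators split as a difference of the contribution from the skew-symmetric directions and the contribution from the symmetric directions.

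Next I would evaluate $Z(x_{j\alpha})$ for a left-invariant $Z$. Since $x_{j\alpha}(y)=(y)_{j\alpha}$ and $\tfrac{d}{ds}x_{j\alpha}(y\exp(sZ))|_{s=0}=(yZ)_{j\alpha}=\sum_\gamma y_{j\gamma}Z_{\gamma\alpha}$, one gets $Z(x_{j\alpha})=\sum_\gamma x_{j\gamma}\,Z_{\gamma\alpha}$ and $Z^2(x_{j\alpha})=\sum_{\gamma,\delta}x_{j\gamma}\,(Z^2)_{\gamma\delta}\,\delta_{\delta\alpha}=\sum_\gamma x_{j\gamma}(Z^2)_{\gamma\alpha}$. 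Substituting these into the two sums, the needed identities become purely algebraic statements about the basis $\B_\g$, of exactly the shape
$$\sum_{Z\in\B_\g}g(Z,Z)\,Z\otimes Z \quad\text{and}\quad \sum_{Z\in\B_\g}g(Z,Z)\,Z^2$$
acting on matrix entries. Here the signs matter: the symmetric block enters with a minus sign. A direct way to finish is to note that the standard Riemannian computation on $\O n$ (which appears as the $\O n$-analogue of Lemma \ref{lemma-U}, with $\tau(x_{j\alpha})=-(n-1)\,x_{j\alpha}$ and $\kappa(x_{j\alpha},x_{k\beta})=-x_{k\alpha}x_{j\beta}+\tfrac{?}{}$; the precise coefficients are the ones recorded in \cite{Gud-Sak-1}) handles the $\o n$-directions, and then one adds the symmetric-direction contribution with the opposite sign. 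Alternatively, and more cleanly, one evaluates both full sums over all of $\glr n$ at once using the Casimir-type identity $\sum_{r,s}(E_{rs})_{\gamma\alpha}(E_{rs})_{\delta\beta}=\delta_{\gamma\delta}\delta_{\alpha\beta}$ adapted to the $X_{rs},Y_{rs},D_t$ basis with the indefinite weights $g(Z,Z)=\pm1$.

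The only real subtlety — and the step I expect to be the main obstacle — is bookkeeping the signs coming from the indefinite metric, because unlike the compact case the two blocks contribute with opposite signs, so one cannot simply quote the Riemannian $\O n$ result verbatim; one must track how the relation $g(X_{rs},X_{rs})=-1$ flips the sign of the symmetric part of the Casimir element relative to the Riemannian normalisation. Once that is pinned down, comparing with the known value on $\GLC n$ from Proposition \ref{proposition-GLC} (where the metric is $-\Re\trace(Z\cdot W)$ and $\glc n=\glr n\oplus i\glr n$, so $\GLR n$ is a totally geodesic "real form" carrying half the directions) gives a useful consistency check: the $\GLR n$ coefficients should be exactly half of the $\GLC n$ ones, i.e. $\tau(x_{j\alpha})=-n\,x_{j\alpha}$ and $\kappa(x_{j\alpha},x_{k\beta})=-x_{k\alpha}x_{j\beta}$, matching the claim. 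Given this, the proof can legitimately be compressed to the remark that it follows from the $\O n$-version of Lemma \ref{lemma-U} together with the orthogonal decomposition $\glc n=\glr n\oplus i\,\glr n$ and the definition of the metric, in complete parallel with the proof of Proposition \ref{proposition-GLC}.
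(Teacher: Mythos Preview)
Your direct computation via the basis $\{Y_{rs}\}\cup\{X_{rs},D_t\}$ is valid, but the paper takes a much shorter route---and it is precisely what you relegate to a ``consistency check''. The paper's entire proof is: from the orthogonal decomposition $\glc n=\glr n\oplus i\cdot\glr n$ and Proposition~\ref{proposition-GLC} the result is immediate. The mechanism is that for any $X\in\glr n$ one has $(iX)(z_{j\alpha})=i\,X(z_{j\alpha})$, hence $(iX)^2(z_{j\alpha})=-X^2(z_{j\alpha})$, while $g(iX,iX)=-g(X,X)$; thus the $i\cdot\glr n$ block contributes to $\tau$ and $\kappa$ on $\GLC n$ exactly the same amount as the $\glr n$ block, so the $\GLR n$ operators are obtained by halving those of Proposition~\ref{proposition-GLC}.

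Your stated primary plan---reduce to $\O n$ ``exactly as Proposition~\ref{proposition-GLC} was reduced to Lemma~\ref{lemma-U}''---is not the correct analogy. The reduction $\GLC n\to\U n$ works because $\glc n=\u n\oplus i\cdot\u n$ and the second summand is literally $i$ times the first, which is what produces the clean doubling. In $\glr n=\o n\oplus\Sym$, the symmetric block is \emph{not} $i\cdot\o n$, so no such doubling occurs (indeed $-\tfrac{n-1}{2}\cdot 2\neq -n$; note also that Lemma~\ref{lemma-SO} gives $-\tfrac{n-1}{2}$, not $-(n-1)$). You can still split the sum as $\o n$-part plus $\Sym$-part, but then the $\Sym$-part must be computed from scratch, which is just the direct computation in disguise. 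Your final sentence conflates the two routes: the one-line compression uses Proposition~\ref{proposition-GLC} together with $\glc n=\glr n\oplus i\cdot\glr n$, not an $\O n$-lemma.
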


\begin{proof}
This is an immediate consequence of Proposition \ref{proposition-GLC} and how the semi-Riemannian metric is defined on the complex Lie algebra $$\glc n=\glr n\oplus i\cdot\glr n.$$
\end{proof}

As an immediate consequence of Proposition \ref{proposition-GLR} we have the following result.

\begin{theorem}\label{theorem-eigenfamily-GLR}
Let $v$ be a non-zero element of $\cn^n$. Then the complex  $n$-dimensional vector space
$$
\E_v=\{\phi_a:\GLR n\to\cn\,|\,\phi_a(x)=\trace(v^tax^t),\  a\in\mathbb{C}^n\}
$$
is an eigenfamily on $\GLR n$ such that for all $\phi,\psi\in\E_v$ we have 
$$\tau(\phi)=-\,n\cdot\phi\ \ \text{and}\ \  \kappa(\phi,\psi)=-\,\phi\cdot\psi.$$	
\end{theorem}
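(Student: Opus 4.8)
The plan is to deduce the two relations directly from Proposition \ref{proposition-GLR}, using the complex-linearity of $\tau$ and the complex-bilinearity of $\kappa$. First I would rewrite the defining expression in terms of the coordinate functions $x_{j\alpha}:\GLR n\to\rn$. Writing $v=(v_1,\dots,v_n)$ and $a=(a_1,\dots,a_n)$, the matrix $v^ta$ has entries $(v^ta)_{j\alpha}=v_j\,a_\alpha$, so
$$\phi_a(x)=\trace(v^tax^t)=\sum_{j,\alpha=1}^n v_j\,a_\alpha\,x_{j\alpha}.$$
Thus every $\phi_a$ is a fixed $\cn$-linear combination of the $x_{j\alpha}$ whose coefficients depend linearly on $a$, while the ``first-index weights'' $v_j$ are common to the whole family. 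Since $v\neq 0$, the assignment $a\mapsto\phi_a$ is linear and injective, so $\E_v$ is indeed a complex $n$-dimensional vector space.

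Next I would apply $\tau$, which is $\cn$-linear over constant coefficients, term by term:
$$\tau(\phi_a)=\sum_{j,\alpha}v_j a_\alpha\,\tau(x_{j\alpha})=-\,n\sum_{j,\alpha}v_j a_\alpha\,x_{j\alpha}=-\,n\cdot\phi_a,$$
which is the first assertion. For $\kappa$ I would use bilinearity together with $\kappa(x_{j\alpha},x_{k\beta})=-\,x_{k\alpha}\,x_{j\beta}$ from Proposition \ref{proposition-GLR}: for $\phi_a,\phi_b\in\E_v$,
$$\kappa(\phi_a,\phi_b)=\sum_{j,\alpha,k,\beta}v_j a_\alpha\,v_k b_\beta\,\kappa(x_{j\alpha},x_{k\beta})=-\sum_{j,\alpha,k,\beta}v_j a_\alpha\,v_k b_\beta\,x_{k\alpha}\,x_{j\beta}.$$

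The one step that needs care is the final factorisation of this double sum. In the summand the factor $x_{k\alpha}$ carries the index pair $(k,\alpha)$ and $x_{j\beta}$ carries $(j,\beta)$, while the scalar coefficient may be rewritten as $v_j a_\alpha\,v_k b_\beta=(v_k a_\alpha)(v_j b_\beta)$ precisely because both $\phi_a$ and $\phi_b$ are built from the same vector $v$; hence the sum factorises as $\bigl(\sum_{k,\alpha}v_k a_\alpha x_{k\alpha}\bigr)\bigl(\sum_{j,\beta}v_j b_\beta x_{j\beta}\bigr)=\phi_a\cdot\phi_b$, giving $\kappa(\phi_a,\phi_b)=-\,\phi_a\cdot\phi_b$. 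I expect this index bookkeeping — showing that the ``row-index swap'' in Proposition \ref{proposition-GLR} is exactly absorbed by the shared factor $v$ — to be the only real obstacle; everything else is routine substitution. It is also worth noting explicitly that, although the $x_{j\alpha}$ are real-valued on $\GLR n$, the relations pass to the complex-valued combinations $\phi_a$ because $\tau$ and $\kappa$ are understood here as the complex-linear, respectively complex-bilinear, extensions of the real operators. This is exactly the pattern by which Theorem \ref{theorem-eigenfamily-GLC} is obtained from Proposition \ref{proposition-GLC}, which is why the present statement can be stated as an immediate consequence of Proposition \ref{proposition-GLR}.
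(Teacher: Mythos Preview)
Your proposal is correct and follows the same route as the paper: the paper simply states that the result is a direct consequence of Proposition~\ref{proposition-GLR} (together with the analogous $\GLC n$ case, Theorem~\ref{theorem-eigenfamily-GLC}), and what you have written is exactly the explicit unpacking of that deduction. The index bookkeeping you flag as the one delicate point is handled correctly, and your remark that the shared vector $v$ is precisely what allows the factorisation is the essential observation.
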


\begin{proof}
This is a direct consequence of Proposition \ref{proposition-GLR}  and Theorem \ref{theorem-eigenfamily-GLC}.
\end{proof}


\section{The Semi-Riemannian Lie Group $\GLH n$}
\label{section-GLH}


In this section we consider the quaternionic general linear group $\GLH n$.  Its standard complex representation $\pi:\GLH n\to\GLC {2n}$ is given by
$$
\pi:(z+jw)\mapsto g=\begin{bmatrix}
	z_{11}    &  \dots & z_{1n}   & w_{11}    & \dots  & w_{1n}\\
	\vdots    & \ddots & \vdots      & \vdots    & \ddots & \vdots  \\
	z_{n1} & \dots  & z_{nn} & w_{n1} & \dots  & w_{nn}\\
	-\bar w_{11} &  \dots   & -\bar w_{1n}   & \bar z_{11}  & \dots  & \bar z_{1n}\\
	\vdots       & \ddots   & \vdots      & \vdots    & \ddots & \vdots  \\
	-\bar w_{n1} & \dots & -\bar w_{nn} & \bar z_{n1} & \dots  & \bar z_{nn}\\
\end{bmatrix}.
$$
The Lie algebra $\glh{n}$ of $\GLH{n}$ clearly satisfies 
$$\glh{n}=\Big\{\begin{bmatrix}Z & W \\
	-\bar W & \bar Z\end{bmatrix}\Big|\ Z,W \in \glc {n}\Big\}.$$ 
As a subgroup of $\GLC{2n}$ the quaternionic general linear group $\GLH n$ inherits its natural semi-Riemannian metric $g$ induced by the
semi-Euclidean inner product $\glh n\times \glh n\to\glh n$ on $\glh n$ given by 
$$g(Z,W)=-\Re\trace(Z\cdot W).$$  
For the Lie algebra $\glh n$ we have the orthogonal splitting
$$
\glh n= \glhp n\oplus \glhm n,
$$
where
$$
\glhp n=\sp n=\Big\{\begin{bmatrix}
	Z & W\\
	-\bar{W} & \bar{Z}
\end{bmatrix}\in\cn^{2n\times 2n}\, \Big|\, Z+\bar Z^t=0, W-W^t=0\Big\}.
$$
By $\B^+$ we denote the following orthonormal basis for the Lie algebra $\sp n$ of the quaternionic unitary group $\Sp n$ which is the maximal compact subgroup of $\GLH n$. This satisfies
\begin{eqnarray*}
\B^+&=&
\Big\{\frac{1}{\sqrt{2}}
\begin{bmatrix}
		0 & iX_{rs}\\
		iX_{rs}& 0
	\end{bmatrix}, 
\frac{1}{\sqrt{2}}
\begin{bmatrix}
		0 & X_{rs}\\
		-X_{rs}& 0
\end{bmatrix}, 
\frac{1}{\sqrt{2}}
\begin{bmatrix}
		iX_{rs}& 0\\
		0 & -iX_{rs}
\end{bmatrix},\\
& &
\qquad
\frac{ 1}{\sqrt{2}}
\begin{bmatrix}
	Y_{rs}& 0\\
	0 & Y_{rs}
\end{bmatrix},
\frac{1}{\sqrt{2}}
\begin{bmatrix}
		0 & D_t\\
		-D_t & 0
\end{bmatrix}, 
\frac{1}{\sqrt{2}}
\begin{bmatrix}
		i D_t & 0\\
		0 & -i D_t
\end{bmatrix},\\
& &
\qquad\qquad
\frac{1}{\sqrt{2}}
\begin{bmatrix}
		0 & iD_t\\
		i D_t & 0
\end{bmatrix}
\Big|\, 1\leq r<s\leq n,\ 1 \leq t\leq n\Big\}\,.
\end{eqnarray*}
For the orthogonal complement $\glhm n$ of $\sp n$ in $\glh n$ we have the orthonormal basis
\begin{eqnarray*}
	\B^-&=&\Big\{\frac{1}{\sqrt{2}}\begin{bmatrix}
		0 & Y_{rs}\\
		-Y_{rs} & 0
	\end{bmatrix}, \frac{1}{\sqrt{2}}\begin{bmatrix}
		0 & iY_{rs}\\
		iY_{rs} & 0
	\end{bmatrix},\frac{1}{\sqrt{2}}\begin{bmatrix}
		iY_{rs} & 0\\
		0 & -i Y_{rs}
	\end{bmatrix},\\
& &
\qquad
\frac{1}{\sqrt{2}}\begin{bmatrix}
		X_{rs} & 0\\
		0 & X_{rs}
	\end{bmatrix}, \frac{1}{\sqrt{2}}\begin{bmatrix}
		D_t & 0\\
		0 & D_t
	\end{bmatrix} \Big|\, 1\leq r,s\leq n,\ 1 \leq t\leq n\Big\}\,.
\end{eqnarray*}
Then $\B=\B^+\cup\B^-$ is an orthonormal basis for $\glh n$ such that $g(Z,Z)=1$ if $Z\in\B^+$ and $g(Z,Z)=-1$ if $Z\in \B^-.$
\medskip

With this at our disposal we can now prove the following statement.

\begin{proposition}\label{proposition-GLH}
Let $z_{j\alpha}, w_{j\alpha}:\GLH n\to\cn$ be the complex-valued matrix elements of the standard representation of the quaternionic general linear group $\GLH n$. Then the tension field $\tau$ and the conformality operator $\kappa$ on $\GLH n$ satisfy the following relations
$$\tau(z_{j\alpha})= -\,2n\cdot z_{j\alpha},\ \ \tau(w_{j\alpha})=
-\,2n\cdot w_{j\alpha},$$
$$\kappa(z_{j\alpha},z_{k\beta})=-\,z_{k\alpha}\cdot z_{j\beta},\ \
\kappa(w_{j\alpha},w_{k\beta})=-\,w_{k\alpha}\cdot w_{j\beta},$$
$$\kappa(z_{j\alpha},w_{k\beta})=-\,z_{k\alpha}\cdot w_{j\beta}.$$
\end{proposition}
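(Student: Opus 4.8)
The plan is to reduce both statements to finite matrix computations with the explicit orthonormal basis $\B=\B^+\cup\B^-$ of $\glh n\subset\glc{2n}$. I shall view a point $p\in\GLH n$ as its image under $\pi$ in $\GLC{2n}$, so that the functions $z_{j\alpha}$ and $w_{j\alpha}$ are the restrictions to $\GLH n$ of the matrix-element functions $g_{j\alpha}$ and $g_{j,\alpha+n}$ of the standard representation of $\GLC{2n}$. Since $\nabla_Z Z=0$ for every $Z\in\glh n$ and $Z(g_{\mu\nu})(p)=(pZ)_{\mu\nu}$, $Z^2(g_{\mu\nu})(p)=(pZ^2)_{\mu\nu}$, the relations derived above give
$$
\tau(g_{\mu\nu})(p)=(p\,\Omega)_{\mu\nu},\qquad \Omega:=\sum_{Z\in\B}g(Z,Z)\,Z^2\in\cn^{2n\times 2n},
$$
and $\kappa(g_{\mu\nu},g_{\rho\sigma})(p)=\sum_{Z\in\B}g(Z,Z)\,(pZ)_{\mu\nu}\,(pZ)_{\rho\sigma}$.

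First I would treat the tension field by computing $\Omega$. Every element of $\B$ is either block diagonal or block off-diagonal and is assembled from a single matrix $X_{rs}$, $Y_{rs}$ or $D_t$, so its square is read off immediately from $X_{rs}^2=\tfrac12(E_{rr}+E_{ss})$, $Y_{rs}^2=-\tfrac12(E_{rr}+E_{ss})$ and $D_t^2=E_{tt}$. Summing over $\B$ with the signs $g(Z,Z)=+1$ on $\B^+$ and $g(Z,Z)=-1$ on $\B^-$, the generators built from the $X_{rs},Y_{rs}$ with $r<s$ contribute $-2(n-1)\,I_{2n}$ and those built from the $D_t$ contribute $-2\,I_{2n}$, so $\Omega=-2n\,I_{2n}$. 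Reading off the entries $(j,\alpha)$ and $(j,\alpha+n)$ of $p\,\Omega=-2n\,p$ then gives $\tau(z_{j\alpha})=-2n\,z_{j\alpha}$ and $\tau(w_{j\alpha})=-2n\,w_{j\alpha}$.

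For the conformality operator I would expand $(pZ)_{\mu\nu}=\sum_{c=1}^{2n}p_{\mu c}Z_{c\nu}$, using that $p=\begin{bmatrix}z&w\\-\bar w&\bar z\end{bmatrix}$ while every $Z\in\B$ has block form $\begin{bmatrix}A&B\\-\bar B&\bar A\end{bmatrix}$ with $A=0$ or $B=0$. This ``block purity'' of $\B$ is the crucial structural point: it makes every product $A_{a\alpha}B_{b\beta}$ appearing in $\kappa(g_{\mu\nu},g_{\rho\sigma})$ vanish, so the potential terms mixing the $z$- and $w$-entries disappear, and what survives is governed by the block sums $\sum_{Z\in\B}g(Z,Z)A_{a\alpha}A_{b\beta}$, $\sum_{Z\in\B}g(Z,Z)B_{a\alpha}B_{b\beta}$, $\sum_{Z\in\B}g(Z,Z)\bar B_{a\alpha}B_{b\beta}$ and $\sum_{Z\in\B}g(Z,Z)A_{a\alpha}\bar A_{b\beta}$ (the variants with conjugated blocks follow at once because the coefficients $g(Z,Z)$ are real). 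Each of these I would evaluate from the elementary identity
$$
\sum_{r<s}\Bigl((Y_{rs})_{a\alpha}(Y_{rs})_{b\beta}-(X_{rs})_{a\alpha}(X_{rs})_{b\beta}\Bigr)-\sum_{t}(E_{tt})_{a\alpha}(E_{tt})_{b\beta}=-\,\delta_{a\beta}\,\delta_{b\alpha}
$$
and its analogues obtained by inserting factors of $i$; this gives $-\delta_{a\beta}\delta_{b\alpha}$ for the first sum, $0$ for the second and fourth, and $\delta_{a\beta}\delta_{b\alpha}$ for the third. Substituting back, and summing only over the index ranges of $p$ selected by the choice of $\mu,\nu,\rho,\sigma$, collapses $\kappa(z_{j\alpha},z_{k\beta})$, $\kappa(w_{j\alpha},w_{k\beta})$ and $\kappa(z_{j\alpha},w_{k\beta})$ to $-z_{k\alpha}z_{j\beta}$, $-w_{k\alpha}w_{j\beta}$ and $-z_{k\alpha}w_{j\beta}$, respectively.

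The main obstacle here is not conceptual but a matter of careful bookkeeping in the $\kappa$ step: one has to keep track of whether each summation index runs through $\{1,\dots,n\}$ or $\{n+1,\dots,2n\}$, and of how the signs and complex conjugations carried by the lower blocks of $p$ and of $Z$ interact, so as to be certain that the mixed $z$--$w$ contributions really cancel and that the remaining ones reassemble into exactly the claimed monomials. Block purity of $\B$ together with the identities above are precisely what make this feasible; once they are in place, each of the five formulas drops out by a routine substitution.
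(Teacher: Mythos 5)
Your proposal is correct and follows essentially the same route as the paper: a direct computation of $\sum_{Z\in\B}g(Z,Z)\,Z^2=-2n\,I_{2n}$ for the tension field, and of the signed sums of products of basis entries for the conformality operator, over the explicit orthonormal basis $\B^+\cup\B^-$, using $Z(g_{\mu\nu})(p)=(pZ)_{\mu\nu}$ and $\nabla_ZZ=0$. The only difference is organizational: you isolate the scalar identity $\sum_{r<s}(Y_{rs}\otimes Y_{rs}-X_{rs}\otimes X_{rs})-\sum_t D_t\otimes D_t$ and the block-purity observation explicitly, whereas the paper performs the same cancellations directly in matrix form via $\sum_Z g(Z,Z)\,Z E_{\alpha\beta}Z^t=-E_{\beta\alpha}$.
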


\begin{proof}
For the tension field $\tau$ on $\GLH n$ we have
	\begin{eqnarray*}
\tau(z_{j\alpha})
&=&\sum_{Z\in\B}g(Z,Z)\cdot Z^2(z_{j\alpha})\\
&=&\sum_{Z\in\B^+}Z^2(z_{j\alpha})-\sum_{Z\in\B^-}Z^2(z_{j\alpha})\\
&=&\frac{1}{2}\,e_jz\Big\{3\sum_{r<s}
\begin{bmatrix}
-X_{rs}^2&0\\
0&-X_{rs}^2
\end{bmatrix}
+\sum_{r<s}
\begin{bmatrix}
Y_{rs}^2 & 0\\
0& Y_{rs}^2 
\end{bmatrix}\\ 
&&\qquad\qquad\qquad\qquad\qquad\quad
+3\sum_{t=1}^n\begin{bmatrix}
			-D_t^2& 0\\
			0 & -D_t^2 
		\end{bmatrix}\Big\}e_\alpha^t\\
& & -\frac{1}{2}\,e_j z\Big\{\sum_{r<s}\begin{bmatrix}
			X_{rs}^2&0\\
			0& X_{rs}^2
		\end{bmatrix}+3\sum_{r<s}\begin{bmatrix}
			-Y_{rs}^2 & 0\\
			0 & -Y_{rs}^2
		\end{bmatrix}\\ 
&&\qquad\qquad\qquad\qquad\qquad\qquad\quad
	 +\sum_{t=1}^n\begin{bmatrix}
			D_t^2 & 0\\
			0 & D_t^2
		\end{bmatrix}\Big\}e_{\alpha}^t\\
&=& \frac{1}{2}\,e_jz\Big\{-4\sum_{r<s}\begin{bmatrix}
			X_{rs}^2 & 0\\
			0 & X_{rs}^2
		\end{bmatrix}+4\sum_{r<s}\begin{bmatrix}
			Y_{rs}^2 & 0\\
			0 & Y_{rs}^2
		\end{bmatrix}\\ 
& &\qquad\qquad\qquad\qquad\qquad\qquad\quad
-4\sum_{t=1}^n\begin{bmatrix}
			D_t^2 & 0\\
			0 & D_t^2
		\end{bmatrix}\Big\}e_{\alpha}^t\\
&=&-2\,e_jz\begin{bmatrix}
			n\cdot I_n &0\\
			0& n\cdot I_n
		\end{bmatrix}e_\alpha^t\\
&=& -2n\cdot z_{j\alpha}.
	\end{eqnarray*}

For the conformal operation $\kappa$ on $\GLH n$ we similarly we yield 
\begin{eqnarray*}
\kappa(z_{j\alpha},z_{k\beta})
&=&\sum_{Z\in\B}g(Z,Z)\cdot Z(z_{j\alpha})\cdot Z(z_{k\beta})\\
&=&\sum_{Z\in\B^+}Z(z_{j\alpha})\cdot Z(z_{k\beta})-\sum_{Z\in\B^-}Z(z_{j\alpha})\cdot Z(z_{k\beta})\\
&=&e_jz\Big\{\sum_{Z\in\B^+}Z\begin{bmatrix}
E_{\alpha\beta} & 0\\
	0 & 0
\end{bmatrix}Z^t-\sum_{Z\in\B^+}Z\begin{bmatrix}
			E_{\alpha\beta} & 0\\
			0 & 0
		\end{bmatrix}Z^t\Big\} z^te_k^t\\
		&=&e_jz\Big\{-\sum_{r<s}\begin{bmatrix}
			X_{rs}E_{\alpha\beta}X_{rs}& 0\\
			0 &0 
		\end{bmatrix}+\sum_{r<s}\begin{bmatrix}
			Y_{rs}E_{\alpha\beta}Y_{rs}^t & 0\\
			0 & 0
		\end{bmatrix} \\ 
	&& \qquad\qquad\qquad\qquad\qquad\qquad -\sum_{t=1}^n\begin{bmatrix}
			D_tE_{\alpha\beta}D_t & 0\\
			0 & 0
		\end{bmatrix}\Big\} z^te_k^t\\
		&=&-e_jz(E_{\beta\alpha})z^te_k^t\\
		&=&-z_{k\alpha}\cdot z_{j\beta}.
	\end{eqnarray*}
The other identities can be proven in exactly the same way.
\end{proof}

\begin{theorem}\label{theorem-eigenfamily-GLH}
Let $u,v$ be a non-zero elements of $\cn^n$. Then the complex $2n$-dimensional vector space 
$$\E_{uv}=\{\phi_{ab}:\GLH n\to\cn\ |\ \phi_{ab}(g)=\trace (u^taz^t+v^tbw^t),\  a,b\in \cn^n\}$$ 
is an eigenfamily on $\GLH n$ such that for all $\phi,\psi\in\E_{uv}$ we have 
$$\tau(\phi)=-\,2n\cdot\phi\ \ \text{and}\ \  \kappa(\phi,\psi)=-\,\phi\cdot\psi.$$
\end{theorem}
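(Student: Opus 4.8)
The plan is to deduce both relations from Proposition \ref{proposition-GLH} together with the linearity of the Laplace--Beltrami operator $\tau$ and the bilinearity of the conformality operator $\kappa$. First I would rewrite a general member of $\E_{uv}$ in terms of the complex coordinate functions $z_{j\alpha},w_{j\alpha}:\GLH n\to\cn$: with $u=(u_j)$, $v=(v_j)$, $a=(a_\alpha)$ and $b=(b_\alpha)$ one has
$$
\phi_{ab}=\trace(u^taz^t+v^tbw^t)=\sum_{j,\alpha=1}^{n}u_j a_\alpha z_{j\alpha}+\sum_{j,\alpha=1}^{n}v_j b_\alpha w_{j\alpha}=:\phi_{ab}^{z}+\phi_{ab}^{w},
$$
so that $\E_{uv}$ is the $2n$-dimensional complex vector space spanned by the linear forms $\sum_j u_j z_{j\alpha}$ and $\sum_j v_j w_{j\alpha}$, $\alpha=1,\dots,n$.

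Since $\tau$ is complex-linear and $\tau(z_{j\alpha})=-2n\cdot z_{j\alpha}$, $\tau(w_{j\alpha})=-2n\cdot w_{j\alpha}$ by Proposition \ref{proposition-GLH}, the first relation $\tau(\phi_{ab})=-2n\cdot\phi_{ab}$ for every $\phi_{ab}\in\E_{uv}$ follows at once. This step presents no difficulty.

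For the second relation I would take $\phi=\phi_{ab}$ and $\psi=\phi_{cd}$ in $\E_{uv}$, use the bilinearity of $\kappa$ to expand $\kappa(\phi,\psi)$ into the four sums $\kappa(\phi^{z}_{ab},\phi^{z}_{cd})$, $\kappa(\phi^{z}_{ab},\phi^{w}_{cd})$, $\kappa(\phi^{w}_{ab},\phi^{z}_{cd})$, $\kappa(\phi^{w}_{ab},\phi^{w}_{cd})$, and substitute into each the corresponding identity of Proposition \ref{proposition-GLH} for $\kappa$ on the coordinate functions (the mixed term $\kappa(w_{j\alpha},z_{k\beta})$ being obtained from $\kappa(z_{k\beta},w_{j\alpha})$ by symmetry of $\kappa$). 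In each of the four sums the summation indices can then be relabelled so that the right-hand side factors as a product of two of the linear forms making up $\phi$ and $\psi$; adding the four contributions should reconstitute
$$
\kappa(\phi,\psi)=-\bigl(\phi^{z}_{ab}+\phi^{w}_{ab}\bigr)\bigl(\phi^{z}_{cd}+\phi^{w}_{cd}\bigr)=-\,\phi\cdot\psi .
$$

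The one step carrying actual content is this index bookkeeping, in particular showing that the two mixed sums collapse to $-\,\phi^{z}_{ab}\cdot\phi^{w}_{cd}$ and $-\,\phi^{w}_{ab}\cdot\phi^{z}_{cd}$ respectively --- precisely the cross terms needed to complete the product above; the two ``pure'' sums $\kappa(\phi^{z}_{ab},\phi^{z}_{cd})$ and $\kappa(\phi^{w}_{ab},\phi^{w}_{cd})$ reduce in the same way and more transparently. This is carried out exactly as in the Riemannian model, Theorem 5.2 of \cite{Gud-Sak-1}, and runs parallel to Theorems \ref{theorem-eigenfamily-GLC} and \ref{theorem-eigenfamily-GLR}; all the remaining manipulations are purely formal.
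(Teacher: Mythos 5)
Your reduction of the $\tau$-identity to Proposition \ref{proposition-GLH} is fine, and so is the four-term expansion of $\kappa(\phi,\psi)$; the two pure sums do collapse as you say, because the coefficient $u_ju_k$ (resp.\ $v_jv_k$) is symmetric in the two first indices that the formula $\kappa(z_{j\alpha},z_{k\beta})=-z_{k\alpha}z_{j\beta}$ interchanges. But the step you deferred --- the collapse of the two mixed sums --- is exactly where the argument breaks down. Writing $\phi^{z}_{ab}=\sum_{j,\alpha}u_ja_\alpha z_{j\alpha}$ and $\phi^{w}_{cd}=\sum_{k,\beta}v_kd_\beta w_{k\beta}$ and substituting $\kappa(z_{j\alpha},w_{k\beta})=-z_{k\alpha}\cdot w_{j\beta}$ from Proposition \ref{proposition-GLH}, one finds
\begin{eqnarray*}
\kappa(\phi^{z}_{ab},\phi^{w}_{cd})
&=&-\sum_{j,\alpha,k,\beta}u_ja_\alpha v_kd_\beta\, z_{k\alpha}w_{j\beta}\\
&=&-\Big(\sum_{k,\alpha}v_ka_\alpha z_{k\alpha}\Big)\cdot\Big(\sum_{j,\beta}u_jd_\beta w_{j\beta}\Big)
\;=\;-\trace(v^taz^t)\cdot\trace(u^tdw^t),
\end{eqnarray*}
i.e.\ the swap of the first indices trades $u$ for $v$ between the $z$-factor and the $w$-factor. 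The cross term you need is $-\phi^{z}_{ab}\cdot\phi^{w}_{cd}=-\trace(u^taz^t)\cdot\trace(v^tdw^t)$, and the two expressions agree for all $a,d$ only when $u$ and $v$ are proportional. Concretely, take $n=2$, $u=e_1$, $v=e_2$; then $\phi=z_{11}\in\E_{uv}$ (choose $a=e_1$, $b=0$) and $\psi=w_{21}\in\E_{uv}$ (choose $c=0$, $d=e_1$), while Proposition \ref{proposition-GLH} gives $\kappa(z_{11},w_{21})=-z_{21}\cdot w_{11}$, which is not equal to $-z_{11}\cdot w_{21}=-\phi\cdot\psi$ on $\GLH 2$.

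So this is not merely an omitted computation: carrying it out shows the argument only closes under the extra hypothesis that $u$ and $v$ are linearly dependent (equivalently one may take $u=v$, which is the form of the compact model in \cite{Gud-Sak-1}). The parallel you draw with Theorems \ref{theorem-eigenfamily-GLC} and \ref{theorem-eigenfamily-GLR} does not cover this point, since those families involve a single vector $v$ and hence have no mixed terms. You should either impose $u\parallel v$ (adjusting the dimension count accordingly if you keep both parameters) or explain why the conclusion is nevertheless compatible with the mixed-term formula of Proposition \ref{proposition-GLH}; as written, the two statements are in conflict.
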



\section{The Semi-Riemannian Lie group $\SLC n$}
\label{section-SLC}


In this section we construct eigenfamilies on the semisimple  non-compact complex special linear group $\SLC n=\{z\in\GLC n\, |\,\det z=1\}$ 
equipped with its semi-Riemannian metric inherited from $\GLC{n}$. For the Lie algebra 
$$\slc n=\{Z\in\glc n\,|\, \trace Z=0\}$$ 
we have the orthogonal decomposition $\slc n=\su n\oplus i\cdot\su n$. Here 
$$\su n=\{Z\in\u n\,|\, \trace Z=0\}$$
is the Lie algebra of the special unitary group 
$$\SU n=\{z\in \U n\,|\, \det z=1\}$$
which is the maximal compact subgroup of $\SLC n$.

\begin{lemma}\label{lemma-SU}
Let $z_{j\alpha}:\SU n\to\mathbb{C}$ be the complex-valued matrix elements of the standard representation of the special unitary group $\SU n$. Then the tension field $\tau$ and the conformality operator $\kappa$ on $\SU n$ satisfy the following relations
\begin{eqnarray*}
\tau(z_{j\alpha})&=&-\,\frac{(n^2-1)}{n}\cdot z_{j\alpha},\\
\kappa(z_{j\alpha},z_{k\beta})&=&-\,(z_{k\alpha}\cdot  z_{j\beta}-\frac{1}{n}\cdot z_{j\alpha}\cdot z_{k\beta}). 
\end{eqnarray*}
\end{lemma}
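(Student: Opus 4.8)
The plan is to deduce the $\SU n$ identities from their $\U n$ counterparts, supplied by Lemma \ref{lemma-U}, by splitting off the one-dimensional centre of $\u n$. Set $Z_0=\tfrac{i}{\sqrt n}\,I_n\in\u n$. With respect to the inner product $g(Z,W)=-\Re\trace(ZW)$ one has $g(Z_0,Z_0)=-\Re\trace\bigl(-\tfrac1n I_n\bigr)=1$, and $g(Z_0,Z)=0$ for every $Z\in\su n$, since such $Z$ is trace-free. Hence, if $\B_{\su n}$ is an orthonormal basis of $\su n$, then $\B_{\su n}\cup\{Z_0\}$ is an orthonormal basis of $\u n$. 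Since $\U n$ and $\SU n$ are both (compact) subgroups of $\GLC n$, the expressions for $\tau$ and $\kappa$ in terms of an orthonormal basis derived in Section \ref{section-GLC} --- including the vanishing $\nabla_ZZ=0$ established there by the Koszul formula --- apply on each of them with all coefficients $g(Z,Z)=1$. In particular, on $\SU n$ we have $\tau(z_{j\alpha})=\sum_{Z\in\B_{\su n}}Z^2(z_{j\alpha})$ and $\kappa(z_{j\alpha},z_{k\beta})=\sum_{Z\in\B_{\su n}}Z(z_{j\alpha})\,Z(z_{k\beta})$, while the corresponding sums over $\B_{\su n}\cup\{Z_0\}$ compute the $\U n$ quantities of Lemma \ref{lemma-U}.

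Next I would record the action of the scalar field $Z_0$ on the matrix elements. Being a multiple of the identity, $Z_0$ satisfies $Z_0(z_{j\alpha})(p)=(pZ_0)_{j\alpha}=\tfrac{i}{\sqrt n}\,z_{j\alpha}(p)$, whence $Z_0^2(z_{j\alpha})=-\tfrac1n\,z_{j\alpha}$. I also note that for $Z\in\su n$ the left-invariant vector field of $Z$ on $\U n$ is everywhere tangent to $\SU n$ and restricts there to the left-invariant vector field of $Z$ on $\SU n$, since $p\exp(sZ)\in\SU n$ whenever $p\in\SU n$; so each summand $Z^2(z_{j\alpha})$ and $Z(z_{j\alpha})\,Z(z_{k\beta})$ with $Z\in\B_{\su n}$ takes the same value on $\SU n$ regardless of which group it is computed on. Subtracting the $Z_0$-contribution from the $\U n$-sums therefore yields, on $\SU n$,
\begin{align*}
\tau(z_{j\alpha})&=-n\,z_{j\alpha}-Z_0^2(z_{j\alpha})=-n\,z_{j\alpha}+\tfrac1n\,z_{j\alpha},\\
\kappa(z_{j\alpha},z_{k\beta})&=-z_{k\alpha}z_{j\beta}-Z_0(z_{j\alpha})\,Z_0(z_{k\beta})=-z_{k\alpha}z_{j\beta}+\tfrac1n\,z_{j\alpha}z_{k\beta}.
\end{align*}

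This gives exactly $\tau(z_{j\alpha})=-\tfrac{n^2-1}{n}\,z_{j\alpha}$ and $\kappa(z_{j\alpha},z_{k\beta})=-\bigl(z_{k\alpha}z_{j\beta}-\tfrac1n\,z_{j\alpha}z_{k\beta}\bigr)$, the claimed formulas. I do not expect a genuine obstacle here: the only step needing attention is the compatibility used above --- that the matrix elements and the left-invariant vector fields restrict well from $\U n$ to $\SU n$ --- and this is immediate from $\SU n$ being a Lie subgroup of $\U n$ with $\su n\subset\u n$, while everything else is elementary linear algebra.
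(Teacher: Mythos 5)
Your proposal is correct and follows essentially the same route as the paper: it splits $\u n=\su n\oplus\rn\cdot\tfrac{i}{\sqrt n}I_n$ orthogonally, computes the action of the central unit vector on the matrix elements, and subtracts its contribution from the $\U n$ formulas of Lemma \ref{lemma-U}. The extra care you take in verifying that the left-invariant fields and matrix elements restrict compatibly from $\U n$ to $\SU n$ is a welcome but inessential elaboration of the paper's argument.
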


\begin{proof}
For the Lie algebra $\u n$ of the unitary group $\U n$ we have the orthogonal splitting 
$$\u n=\su n\oplus\l,$$ 
where $\l$ is the real line generated by the unit vector $E_n=i\, I_n/\sqrt n$.
Hence the tension field $\hat\tau$ on the unitary group $\U n$ satisfies
\begin{eqnarray*}
\hat\tau(\phi)
&=&\tau(\phi)+ E_n^2(\phi),
\end{eqnarray*}
so we have
\begin{equation*}
\tau(z_{j\alpha})=\hat\tau(z_{j\alpha})- E_n^2(z_{j\alpha})=-(n\cdot z_{j\alpha}-\frac 1n\cdot z_{j\alpha})=-\,\frac{(n^2-1)}{n}\cdot z_{j\alpha}.
\end{equation*}

For the conformality operator $\hat\kappa$ on $\U n$ we similarily yield 
\begin{eqnarray*}
\hat\kappa(\phi,\psi)&=&\kappa(\phi,\psi)+E_n(\phi)\cdot E_n(\psi).
\end{eqnarray*}
Hence
\begin{eqnarray*}
\kappa(z_{j\alpha},z_{k\beta})
&=&\hat\kappa(z_{j\alpha},z_{k\beta})-E_n(z_{j\alpha})\cdot E_n(z_{k\beta})\\
&=&-\,( z_{k\alpha}\cdot z_{j\beta}-\frac{1}{n}\cdot z_{j\alpha}\cdot  z_{k\beta}).
\end{eqnarray*}
\end{proof}

For the special linear group $\SLC n$ we have the following statement.

\begin{proposition}\label{proposition-SLC}
Let $z_{j\alpha}:\SLC n\to\mathbb{C}$ be the complex-valued matrix elements of the standard representation of the special linear group $\SLC n$. Then the tension field $\tau$ and the conformality operator $\kappa$ on $\SLC n$ satisfy the following relations
\begin{eqnarray*}
\tau(z_{j\alpha})&=&-\,\frac{2\,(n^2-1)}{n}\cdot z_{j\alpha},\\
\kappa(z_{j\alpha},z_{k\beta})&=&-\,2\,( z_{k\alpha}\cdot z_{j\beta} -\frac{1}{n}\cdot z_{j\alpha}\cdot z_{k\beta}). 
\end{eqnarray*}
\end{proposition}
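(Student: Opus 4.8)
The proof of Proposition \ref{proposition-SLC} follows the same pattern used to pass from Lemma \ref{lemma-U} to Proposition \ref{proposition-GLC}. The point is that $\SLC n$ carries the semi-Riemannian metric induced from $\glc n$, and we have the orthogonal splitting $\slc n = \su n \oplus i\cdot\su n$ into the $(+1)$-eigenspace and $(-1)$-eigenspace of the metric on the diagonal. If $\B_{\su n}$ denotes an orthonormal basis for $\su n$ (with $g(Z,Z)=1$ for each such $Z$), then $i\cdot\B_{\su n}$ is an orthonormal basis for $i\cdot\su n$ with $g(iZ,iZ)=-1$, and together they form an orthonormal basis $\B_{\slc n}$ for $\slc n$.

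The plan is as follows. First I would write out the formulas for $\tau$ and $\kappa$ on $\SLC n$ in terms of the basis $\B_{\slc n}$, using the general expressions
$$\tau(\phi)=\sum_{Z\in\B_{\slc n}}g(Z,Z)\cdot Z^2(\phi),\qquad \kappa(\phi,\psi)=\sum_{Z\in\B_{\slc n}}g(Z,Z)\cdot Z(\phi)\cdot Z(\psi)$$
established earlier in the excerpt (these hold since $\nab ZZ = 0$ for $Z$ in the Lie algebra of any subgroup of $\GLC n$). Splitting the sum over $\B_{\slc n} = \B_{\su n}\cup i\B_{\su n}$ and noting that $(iZ)(\phi) = i\cdot Z(\phi)$ and hence $(iZ)^2(\phi) = -Z^2(\phi)$, the contribution of $i\B_{\su n}$ to $\tau$ is $-\sum_{Z\in\B_{\su n}}(-1)\cdot Z^2(\phi) = \sum_{Z\in\B_{\su n}}Z^2(\phi)$, which equals the contribution of $\B_{\su n}$ itself. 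Thus $\tau_{\SLC n}(\phi) = 2\sum_{Z\in\B_{\su n}}Z^2(\phi) = 2\,\tau_{\SU n}(\phi)$, where $\tau_{\SU n}$ is the tension field computed in Lemma \ref{lemma-SU}. The same bookkeeping gives $\kappa_{\SLC n}(\phi,\psi) = 2\sum_{Z\in\B_{\su n}}Z(\phi)\cdot Z(\psi) = 2\,\kappa_{\SU n}(\phi,\psi)$, since the factor $i^2 = -1$ from $(iZ)(\phi)(iZ)(\psi)$ cancels the $g(iZ,iZ) = -1$.

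Applying this factor-of-two relation to the two identities of Lemma \ref{lemma-SU} immediately yields
$$\tau(z_{j\alpha}) = 2\cdot\Bigl(-\frac{n^2-1}{n}\cdot z_{j\alpha}\Bigr) = -\frac{2(n^2-1)}{n}\cdot z_{j\alpha}$$
and
$$\kappa(z_{j\alpha},z_{k\beta}) = 2\cdot\Bigl(-\bigl(z_{k\alpha}\cdot z_{j\beta} - \tfrac1n\cdot z_{j\alpha}\cdot z_{k\beta}\bigr)\Bigr) = -2\Bigl(z_{k\alpha}\cdot z_{j\beta} - \tfrac1n\cdot z_{j\alpha}\cdot z_{k\beta}\Bigr),$$
which is the claim. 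There is no real obstacle here; the only point requiring a small amount of care is the sign bookkeeping when passing from the compact real form to its "complexified" partner — making sure that both the $\tau$ and the $\kappa$ sums over $i\B_{\su n}$ reproduce (rather than cancel) the sums over $\B_{\su n}$. Since $\glc n = \slc n \oplus \cn\cdot I_n$ orthogonally and one could alternatively deduce the result by first restricting from $\GLC n$ via Proposition \ref{proposition-GLC}, exactly as Lemma \ref{lemma-SU} was deduced from Lemma \ref{lemma-U} by subtracting the contribution of the unit vector $E_n = i\,I_n/\sqrt n$; both routes give the same answer, which provides a useful consistency check.
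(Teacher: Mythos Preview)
Your argument is correct and is exactly the paper's approach, just spelled out in detail: the paper's proof is the one-line remark that the result follows from Lemma~\ref{lemma-SU} together with the orthogonal decomposition $\slc n = \su n \oplus i\cdot\su n$, and you have made explicit the factor-of-two mechanism behind this. One small point of precision worth noting: the identity $(iZ)(\phi) = i\,Z(\phi)$ that you invoke holds because the matrix coefficients $z_{j\alpha}$ are holomorphic (complex-linear) in the entries of $z$, not for arbitrary smooth $\phi$; this is implicit both in your argument and in the paper's.
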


\begin{proof}
This is an immediate consequence of Lemma \ref{lemma-SU} and how the semi-Riemannian metric is defined on the complex Lie algebra $$\slc n=\su n\oplus i\cdot\su n.$$
\end{proof}

Let $P,Q:\SLC n\to\cn$ be homogeneous polynomials of the matrix elemens $z_{j\alpha}:\SLC n\to\cn$ of degree one i.e. of the form
$$
P(z)=\trace(A\cdot z^t)=\sum_{j,\alpha=1}^n a_{j\alpha}z_{j\alpha},\ \ Q(z)=\trace(B\cdot z^t)=\sum_{k,\beta}^nb_{k\beta} z_{k\beta}
$$
for some $A,B\in\cn^{n\times n}$. As a direct consequence of Proposition \ref{proposition-SLC} we see that
$$\tau(P)=-\,\frac{2\,(n^2-1)}{n}\cdot P,\ \ \tau(Q)=-\,\frac{2\,(n^2-1)}{n}\cdot Q$$
and
\begin{eqnarray*}
&&\kappa(P,Q)+\frac{2\,(n-1)}{n}\cdot P\,Q\\
&=&\sum_{j,\alpha,k,\beta=1}^n a_{j\alpha}b_{k\beta}\,\kappa(z_{j\alpha},z_{k\beta})+\frac{2\,(n-1)}{n}\sum_{j,\alpha,k,\beta=1}^n a_{j\alpha}b_{k\beta}\,z_{j\alpha}z_{k\beta}\\
&=&-\,2\sum_{j,\alpha,k,\beta=1}^n a_{j\alpha} b_{k\beta}\, z_{j\beta}z_{k\alpha}+\frac{2}{n}\sum_{j,\alpha,k,\beta=1}^n a_{j\alpha}b_{k\beta}\,z_{j\alpha} z_{k\beta}\\
& &+\frac{2\,(n-1)}{n}\sum_{j,\alpha,k,\beta=1}^n a_{j\alpha}b_{k\beta}\, z_{j\alpha}z_{k\beta}\\
&=&2\sum_{j,\alpha,k,\beta=1}^n\left(a_{j\alpha}b_{k\beta}\,z_{j\alpha}z_{k\beta}-a_{j\alpha}b_{k\beta}\,z_{j\beta}z_{k\alpha}\right)\\
&=&2\sum_{j,\alpha,k,\beta=1}^n(a_{j\alpha}b_{k\beta}-a_{k\alpha}b_{j\beta})\,z_{j\alpha}z_{k\beta}.
\end{eqnarray*}
\smallskip

\begin{theorem}\label{theorem-eigenfamily-SLC}
Let $v$ be a non-zero element of $\mathbb{C}^n$. Then the complex  $n$-dimensional vector space
$$
\E_v=\{\phi_a:\SLC n\to\mathbb{C}\,|\,\phi_a(z)=\trace(v^taz^t),\  a\in\mathbb{C}^n\}
$$
is an eigenfamily on $\SLC n$ such that for all $\phi,\psi\in\E_v$ we have 
$$\tau(\phi)=-\,\frac{2\,(n^2-1)}{n}\cdot\phi,\ \ \kappa(\phi,\psi)=-\frac{2\,(n-1)}{n}\cdot\phi\cdot \psi.$$
\end{theorem}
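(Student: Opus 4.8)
The plan is to reduce the whole statement to Proposition~\ref{proposition-SLC} and to the bilinear identity derived immediately before the theorem, after which only a short index manipulation with rank-one coefficient matrices remains.

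First I would fix notation. Writing the vectors in $\cn^n$ as rows, each member of $\E_v$ is the homogeneous degree-one polynomial
$$
\phi_a(z)=\trace(v^t a\,z^t)=\sum_{j,\alpha=1}^n v_j\,a_\alpha\,z_{j\alpha},
$$
so that its coefficient matrix is the rank-one matrix $v^t a$, with $(j,\alpha)$-entry $v_j a_\alpha$. Since $v\neq 0$ and the vanishing ideal of $\SLC n$ is generated by $\det z-1$ — a polynomial of degree $n\ge 2$, so the ideal it generates contains no nonzero polynomial of degree one — the $n$ linear forms $z\mapsto\sum_j v_j z_{j\alpha}$ remain linearly independent as functions on $\SLC n$, so $\E_v$ is genuinely $n$-dimensional. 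The tension-field claim is then immediate from the $\cn$-linearity of $\tau$ and Proposition~\ref{proposition-SLC}, since
$$
\tau(\phi_a)=\sum_{j,\alpha=1}^n v_j a_\alpha\,\tau(z_{j\alpha})=-\,\frac{2(n^2-1)}{n}\cdot\phi_a .
$$

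For the conformality claim I would invoke the identity established just before the statement, which for any two degree-one polynomials $P(z)=\sum_{j\alpha}a_{j\alpha}z_{j\alpha}$ and $Q(z)=\sum_{k\beta}b_{k\beta}z_{k\beta}$ on $\SLC n$ reads
$$
\kappa(P,Q)+\frac{2(n-1)}{n}\cdot P\,Q=2\sum_{j,\alpha,k,\beta=1}^n\bigl(a_{j\alpha}b_{k\beta}-a_{k\alpha}b_{j\beta}\bigr)\,z_{j\alpha}z_{k\beta}.
$$
Specialising to $P=\phi_a$ and $Q=\phi_b$ the coefficients factor as $a_{j\alpha}=v_j a_\alpha$ and $b_{k\beta}=v_k b_\beta$, so the bracket on the right equals $v_jv_k a_\alpha b_\beta-v_kv_j a_\alpha b_\beta=0$; the entire right-hand side therefore vanishes identically and $\kappa(\phi,\psi)=-\frac{2(n-1)}{n}\cdot\phi\,\psi$. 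Since the pair $(\lambda,\mu)=\bigl(-2(n^2-1)/n,\,-2(n-1)/n\bigr)$ is the same for every $\phi,\psi\in\E_v$, this exhibits $\E_v$ as an eigenfamily with the asserted eigenvalues.

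I do not anticipate a genuine obstacle; the only step worth a second glance is the vanishing of the bilinear ``obstruction sum'' on the right of the last display, and it collapses for free precisely because the coefficient matrices $v^t a$ are rank one with the \emph{same} left factor $v$, which kills the bracket entrywise. (Had $\E_v$ been built from matrices sharing a common right factor, the same sum would still vanish, now because $a_{j\alpha}b_{k\beta}-a_{k\alpha}b_{j\beta}$ would be antisymmetric under $(j,\alpha)\leftrightarrow(k,\beta)$ whereas $z_{j\alpha}z_{k\beta}$ is symmetric; in neither case is any polynomial relation among the matrix elements on $\SLC n$ invoked, so the defining equation of $\SLC n$ enters only through the dimension count.)
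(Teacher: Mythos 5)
Your proof is correct and takes essentially the same route as the paper's: both invoke the bilinear identity computed just before the theorem and observe that the obstruction term $a_{j\alpha}b_{k\beta}-a_{k\alpha}b_{j\beta}$ vanishes identically because the coefficient matrices $v^ta$ and $v^tb$ are rank one with the same left factor $v$ (the paper phrases this as any two columns of $A=v^ta$ and $B=v^tb$ being linearly dependent, so the corresponding $2\times 2$ determinant is zero). Your additional remarks on the dimension count and the alternative symmetry argument are correct but not needed.
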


\begin{proof}
Assume that $a,b\in\mathbb{C}^n$ and define $A=v^ta$ and $B=v^tb\,.$ By construction any two columns of the matrices $A$ and $B$ are linearly dependent. This means that for all $1\leq j,\alpha,k,\beta\leq n$
$$
\det\begin{bmatrix}
a_{j\alpha} & b_{j\beta}\\
a_{k\alpha} & b_{k\beta}
\end{bmatrix}=a_{j\alpha}b_{k\beta}-a_{k\alpha}b_{j\beta}=0.
$$
The statement now follows from the calculation above.
\end{proof}


\section{The Semi-Riemannian Lie Group $\SLR n$}
\label{section-SLR}


In this section we construct eigenfamilies on the semisimple non-compact special linear group $\SLR n$ equipped with its semi-Riemannian metric inherited from $\GLC{n}$. 
The special linear group $\SLR n$ is the subgroup of $\GLR n$ satisfying 
$$\SLR n=\{x\in\GLR n\, |\,\det x=1\}$$
with Lie algebra $$\slr n=\{X\in\glr n\,|\, \trace X=0\}.$$

\begin{proposition}\label{proposition-SLR}
Let $x_{j\alpha}:\SLR n\to\mathbb{R}$ be the real-valued matrix elements of the standard representation of the special linear group $\SLR n$. Then the tension field $\tau$ and the conformality operator $\kappa$ on $\SLR n$ satisfy the following relations
\begin{eqnarray*}
\tau(x_{j\alpha})&=&-\,\frac{(n^2-1)}{n}\cdot x_{j\alpha},\\
\kappa(x_{j\alpha},x_{k\beta})&=&-\, (x_{j\beta}\cdot  x_{k\alpha}-\frac{1}{n}\cdot x_{j\alpha}\cdot  x_{k\beta}). 
\end{eqnarray*}
\end{proposition}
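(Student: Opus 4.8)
The plan is to obtain this as an immediate consequence of Proposition~\ref{proposition-SLC}, by exactly the mechanism that produced Proposition~\ref{proposition-GLR} from Proposition~\ref{proposition-GLC}: one uses the orthogonal decomposition $\slc n=\slr n\oplus i\cdot\slr n$ (analogous to $\glc n=\glr n\oplus i\cdot\glr n$) together with the fact that the semi-Riemannian metric on $\slc n$ is the complex-bilinear extension of $g(Z,W)=-\Re\trace(Z\cdot W)$. I would first record the elementary observation that $\SLR n$ is a Lie subgroup of $\SLC n$ and that, under this inclusion, each real matrix element $x_{j\alpha}:\SLR n\to\rn$ is the restriction of the complex matrix element $z_{j\alpha}:\SLC n\to\cn$, so that the metric on $\SLR n$ is the restriction of the metric on $\SLC n$.

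Concretely I would fix an orthonormal basis $\{e_1,\dots,e_{n^2-1}\}$ of $\slr n$ for the restricted inner product, with signs $\varepsilon_k=g(e_k,e_k)\in\{\pm1\}$ (the trace form being nondegenerate on the semisimple algebra $\slr n$). Since $g$ is complex-bilinear one has $g(e_k,ie_l)=0$ and $g(ie_k,ie_l)=-\varepsilon_k\delta_{kl}$, so $\{e_k\}\cup\{ie_k\}$ is an orthonormal basis of $\slc n$, and it is with respect to this basis that $\tau$ and $\kappa$ on $\SLC n$ are computed in Proposition~\ref{proposition-SLC}. Because $z_{j\alpha}$ is a linear function of the matrix entries one has $Z(z_{j\alpha})(p)=(pZ)_{j\alpha}$ and $Z^2(z_{j\alpha})(p)=(pZ^2)_{j\alpha}$ for every $Z\in\slc n$, whence $(ie_k)(z_{j\alpha})=i\,e_k(z_{j\alpha})$ and $(ie_k)^2(z_{j\alpha})=-e_k^2(z_{j\alpha})$. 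Feeding this into the defining sums for $\tau$ and $\kappa$ on $\SLC n$, the sign flip of the metric on the $i\cdot\slr n$ directions is cancelled by the sign flip coming from $(ie_k)^2=-e_k^2$ (respectively from $(ie_k)(z_{j\alpha})(ie_k)(z_{k\beta})=-e_k(z_{j\alpha})e_k(z_{k\beta})$), so the $\slr n$-contribution and the $i\cdot\slr n$-contribution coincide. Restricting to $\SLR n$ this yields $\tau_{\SLC n}(z_{j\alpha})=2\,\tau_{\SLR n}(x_{j\alpha})$ and $\kappa_{\SLC n}(z_{j\alpha},z_{k\beta})=2\,\kappa_{\SLR n}(x_{j\alpha},x_{k\beta})$.

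It then remains only to substitute the values $\tau(z_{j\alpha})=-\tfrac{2(n^2-1)}{n}\,z_{j\alpha}$ and $\kappa(z_{j\alpha},z_{k\beta})=-2\,(z_{k\alpha}z_{j\beta}-\tfrac1n z_{j\alpha}z_{k\beta})$ from Proposition~\ref{proposition-SLC}, divide by $2$, and use the commutativity $x_{k\alpha}x_{j\beta}=x_{j\beta}x_{k\alpha}$ to arrive at the stated formulas. The only delicate point — and the reason I would at least briefly spell out the basis argument rather than merely quote Proposition~\ref{proposition-SLC} — is the bookkeeping of the two sign changes attached to the $i\cdot\slr n$ directions; once one checks that they cancel, leaving a clean factor of $2$, nothing else is needed. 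Indeed this is precisely the content of the one-line proofs of Propositions~\ref{proposition-GLR} and~\ref{proposition-SLC}, so in the paper's style the shortest honest write-up is simply: ``This is an immediate consequence of Proposition~\ref{proposition-SLC} and how the semi-Riemannian metric is defined on the complex Lie algebra $\slc n=\slr n\oplus i\cdot\slr n$.''
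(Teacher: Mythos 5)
Your proposal is correct and follows exactly the paper's route: the authors likewise deduce the result from Proposition~\ref{proposition-SLC} via the orthogonal decomposition $\slc n=\slr n\oplus i\cdot\slr n$, leaving the sign/factor-of-two bookkeeping implicit. Your explicit verification that the metric sign flip on the $i\cdot\slr n$ directions cancels against $(ie_k)^2=-e_k^2$ is precisely the content hidden in the paper's one-line proof, so there is nothing to add or correct.
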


\begin{proof}
	The Lie algebra $\slc n$ of the complex special linear group $\SLC n$ is the complexification of $\slr n$ and we have the orthogonal decomposition
	$$\slc n=\slr n\oplus i\cdot\slr n.$$
	Hence the statement is an immediate consequence of Lemma \ref{proposition-SLC}.
\end{proof}

The next result is a direct consequence of Theorem \ref{theorem-eigenfamily-SLC}, Propositions  \ref{proposition-SLC} and \ref{proposition-SLR}.

\begin{theorem}\label{theorem-eigenfamily-SLR}
Let $v$ be a non-zero element of $\cn^n$. Then the complex $n$-dimensional vector space
$$\E_v = \{\phi_a: \SLR n \to \cn\, |\, \phi_a(x) = \trace(v^tax^t),\ a\in\cn^n\}$$
is an eigenfamily on $\SLR n$ such that for all $\phi, \psi \in \E_v$ we have
$$
\tau(\phi)= -\frac{(n^2-1)}{n}\cdot\phi,\quad \kappa(\phi, \psi) = -\frac{(n-1)}{n}\cdot\phi\cdot \psi\,.
$$
\end{theorem}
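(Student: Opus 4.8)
The plan is to mimic exactly the argument already carried out for $\SLC n$ in the proof of Theorem~\ref{theorem-eigenfamily-SLC}, transporting it to the real setting via Proposition~\ref{proposition-SLR}. First I would fix $a,b\in\cn^n$, put $\phi=\phi_a$, $\psi=\phi_b\in\E_v$, and write them as linear forms in the matrix elements: $\phi(x)=\trace(v^tax^t)=\sum_{j,\alpha}A_{j\alpha}x_{j\alpha}$ with $A=v^ta$, and similarly $\psi(x)=\sum_{k,\beta}B_{k\beta}x_{k\beta}$ with $B=v^tb$. The eigenvalue statement for $\tau$ is then immediate from linearity and the first identity of Proposition~\ref{proposition-SLR}: $\tau(\phi)=\sum_{j,\alpha}A_{j\alpha}\tau(x_{j\alpha})=-\tfrac{n^2-1}{n}\,\phi$, and likewise for $\psi$.

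Next I would treat the conformality operator. Using bilinearity of $\kappa$ and the second identity of Proposition~\ref{proposition-SLR},
\begin{eqnarray*}
\kappa(\phi,\psi)+\frac{n-1}{n}\,\phi\,\psi
&=&\sum_{j,\alpha,k,\beta}A_{j\alpha}B_{k\beta}\,\kappa(x_{j\alpha},x_{k\beta})
+\frac{n-1}{n}\sum_{j,\alpha,k,\beta}A_{j\alpha}B_{k\beta}\,x_{j\alpha}x_{k\beta}\\
&=&-\sum_{j,\alpha,k,\beta}A_{j\alpha}B_{k\beta}\,x_{j\beta}x_{k\alpha}
+\frac1n\sum_{j,\alpha,k,\beta}A_{j\alpha}B_{k\beta}\,x_{j\alpha}x_{k\beta}\\
& &+\;\frac{n-1}{n}\sum_{j,\alpha,k,\beta}A_{j\alpha}B_{k\beta}\,x_{j\alpha}x_{k\beta}\\
&=&\sum_{j,\alpha,k,\beta}\bigl(A_{j\alpha}B_{k\beta}-A_{k\alpha}B_{j\beta}\bigr)\,x_{j\alpha}x_{k\beta},
\end{eqnarray*}
where in the last step I relabel the summation indices $j\leftrightarrow k$ in the first sum to combine it with the other two (the coefficient $\tfrac1n+\tfrac{n-1}{n}=1$ makes this work). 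This is the exact analogue of the displayed computation preceding Theorem~\ref{theorem-eigenfamily-SLC}, only with the factor $2$ absent because the real metric carries half the contributions of the complex one.

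Finally I would invoke the structural feature of $\E_v$: since $A=v^ta$ and $B=v^tb$ are rank-one matrices with the same column space spanned by $v$, any two columns of $A$ (and of $B$) are proportional, so every $2\times2$ minor $A_{j\alpha}B_{k\beta}-A_{k\alpha}B_{j\beta}$ formed from column $\alpha$ of $A$ and column $\beta$ of $B$ vanishes — more precisely $A_{j\alpha}=v_j a_\alpha$, $B_{k\beta}=v_k b_\beta$ gives $A_{j\alpha}B_{k\beta}-A_{k\alpha}B_{j\beta}=v_jv_k(a_\alpha b_\beta-a_\alpha b_\beta)=0$. Hence the right-hand side above is identically zero, yielding $\kappa(\phi,\psi)=-\tfrac{n-1}{n}\,\phi\,\psi$, which together with the $\tau$-computation establishes that $\E_v$ is an eigenfamily with the claimed eigenvalues. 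I do not anticipate a genuine obstacle here: the only point requiring care is the index bookkeeping in combining the three sums, and the observation that passing from $\SLC n$ to $\SLR n$ simply halves both eigenvalues — exactly as happens between Proposition~\ref{proposition-SLC} and Proposition~\ref{proposition-SLR}.
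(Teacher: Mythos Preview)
Your proposal is correct and follows essentially the same route as the paper: the paper's proof is a one-line reference to Theorem~\ref{theorem-eigenfamily-SLC} together with Propositions~\ref{proposition-SLC} and~\ref{proposition-SLR}, and what you have written is precisely the explicit version of that reference, rerunning the displayed computation preceding Theorem~\ref{theorem-eigenfamily-SLC} with Proposition~\ref{proposition-SLR} in place of Proposition~\ref{proposition-SLC}. The index relabelling $j\leftrightarrow k$ and the rank-one observation $A_{j\alpha}B_{k\beta}-A_{k\alpha}B_{j\beta}=v_jv_ka_\alpha b_\beta-v_kv_ja_\alpha b_\beta=0$ are both correct.
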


\renewcommand{\arraystretch}{2}
\begin{table}[h]
	\makebox[\textwidth][c]{
		\begin{tabular}{ccccc}
			\midrule\midrule
			Lie group 	& Eigenfunctions $\phi$  & $\lambda$ & $\mu$ & Conditions\\
			\midrule\midrule
			$\GLC{n}$	& $\trace(v^taz^t)$	& ${-\,2\, n}$ & $-\,2$ & $a\in\cn^n$ \\
			\midrule
			$\GLR{n}$	& $\trace(v^tax^t)$	& ${-\, n}$ & $-\,1$ & $a\in\cn^n$ \\
			\midrule
			$\GLH n$    & $\trace (u^taz^t+v^tbw^t)$ & $-\,2\,n$ & $-\,1$      & $a,b\in \cn^n$  \\
			\midrule
			$\SLC{n}$	& $\trace(v^taz^t)$	& $-\frac{2(n^2-1)}{n}$ & $-\frac{2(n-1)}{n}$ & $a\in\cn^n$ \\
			\midrule
			$\SLR n$    & $\trace(v^tax^t)$ & $-\frac{(n^2-1)}{n}$ & $-\frac{(n-1)}{n}$ & $a\in\cn^n$ \\
			\midrule
			$\SLH n$ & $\trace (u^taz^t+v^tbw^t)$ & $-\,\frac{(4n^2-1)}{2n}$ & $-\,\frac{(2n-1)}{2n}$ & $a,b\in \cn^n$ \\
			\midrule\midrule
		\end{tabular}
	}
	\bigskip
	\caption{Eigenfunctions on classical non-compact Lie groups.}
	\label{table-eigenfunctions-1}	
\end{table}
\renewcommand{\arraystretch}{1}


\section{The Semi-Riemannian Lie Group $\SLH n\cong\SUs{2n}$}
\label{section-SLH}


In this section we construct eigenfamilies on the semisimple non-compact quaternionic special linear group $\SLH n$.  This can be realised as 
$$\SUs{2n}=\Big\{\begin{bmatrix}z & w \\ -\bar w & \bar z\end{bmatrix}\in\GLC{2n}\,\Big|\,(z+j\,w)\in\SLH n\Big\},$$
with Lie algebra
$$
\sus {2n}=\Big\{\begin{bmatrix}
	Z & W\\
	-\bar{W} & \bar{Z}
\end{bmatrix}\in\glh n\, \Big|\, \Re\trace Z=0\Big\}.
$$
For the Lie algebra $\glh n$ of $\GLH n$ we have the orthogonal decomposition 
$$\glh n=\sus {2n} \oplus \l$$
where $\l$ is the real line in $\glh n$ generated by the unit vector $E_{2n}=I_{2n}/{\sqrt{2n}}$.

\begin{proposition}\label{proposition-SUs}
Let $z_{j\alpha}, w_{k\beta}:\SUs {2n}\to\cn$ be the complex-valued matrix elements of the standard representation of the Lie group $\SUs {2n}$. Then the tension field $\tau$ and the conformality operator $\kappa$ on $\SUs {2n}$ satisfy the following relations
$$
\tau(z_{j\alpha})=-\,\frac{(4n^2-1)}{2n}\cdot z_{j\alpha},\ \
\tau(w_{j\alpha})=-\,\frac{(4n^2-1)}{2n}\cdot w_{j\alpha},
$$
$$
\kappa(z_{j\alpha},z_{k\beta})=-\,(z_{k\alpha}\cdot z_{j\beta}-\frac{1}{2n}\cdot z_{j\alpha}\cdot z_{k\beta}),$$
$$
\kappa(z_{j\alpha},w_{k\beta})=-\,(z_{k\alpha}\cdot w_{j\beta}-\frac{1}{2n}\cdot z_{j\alpha}\cdot w_{k\beta}),
$$
$$
\kappa(w_{j\alpha},w_{k\beta})=-\,(w_{k\alpha}\cdot w_{j\beta}-\frac{1}{2n}\cdot w_{j\alpha}\cdot w_{k\beta}).
$$
\end{proposition}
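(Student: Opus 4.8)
The plan is to deduce the five relations from the already-established identities on $\GLH n$ (Proposition \ref{proposition-GLH}) by descending to the codimension-one subgroup $\SUs{2n}$, exactly as Lemma \ref{lemma-SU} was derived from Lemma \ref{lemma-U}. First I would invoke the orthogonal decomposition $\glh n = \sus{2n}\oplus\l$ recorded above, where $\l$ is spanned by the unit vector $E_{2n}=I_{2n}/\sqrt{2n}$. The crucial observation is the sign:
$$
g(E_{2n},E_{2n})=-\Re\trace\bigl(I_{2n}/(2n)\bigr)=-1,
$$
so $E_{2n}$ is timelike. Adjoining $E_{2n}$ to an orthonormal basis $\B$ of $\sus{2n}$ yields an orthonormal basis of $\glh n$, and since the Levi-Civita term $\nabla_Z Z$ vanishes on any subgroup of $\GLC{2n}$ carrying this metric (as computed in Section \ref{section-GLC}), both the tension field $\hat\tau$ and the conformality operator $\hat\kappa$ on $\GLH n$ split off the $E_{2n}$-contribution with weight $g(E_{2n},E_{2n})=-1$, giving
$$
\hat\tau(\phi)=\tau(\phi)-E_{2n}^2(\phi),\qquad \hat\kappa(\phi,\psi)=\kappa(\phi,\psi)-E_{2n}(\phi)\cdot E_{2n}(\psi),
$$
whence $\tau(\phi)=\hat\tau(\phi)+E_{2n}^2(\phi)$ and $\kappa(\phi,\psi)=\hat\kappa(\phi,\psi)+E_{2n}(\phi)\cdot E_{2n}(\psi)$.

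Next I would compute the action of $E_{2n}$ on the matrix elements. As a left-invariant vector field, $E_{2n}$ acts on a matrix coordinate by right multiplication with $I_{2n}/\sqrt{2n}$, so it merely rescales: $E_{2n}(z_{j\alpha})=z_{j\alpha}/\sqrt{2n}$ and $E_{2n}(w_{j\alpha})=w_{j\alpha}/\sqrt{2n}$, and iterating, $E_{2n}^2(z_{j\alpha})=z_{j\alpha}/(2n)$, $E_{2n}^2(w_{j\alpha})=w_{j\alpha}/(2n)$.

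Substituting the values $\hat\tau(z_{j\alpha})=-2n\,z_{j\alpha}$, $\hat\kappa(z_{j\alpha},z_{k\beta})=-z_{k\alpha}z_{j\beta}$, together with the analogous identities for $w$ and for the mixed case from Proposition \ref{proposition-GLH}, then yields for instance
$$
\tau(z_{j\alpha})=-2n\,z_{j\alpha}+\frac{1}{2n}\,z_{j\alpha}=-\frac{4n^2-1}{2n}\cdot z_{j\alpha},\qquad \kappa(z_{j\alpha},z_{k\beta})=-z_{k\alpha}z_{j\beta}+\frac{1}{2n}\,z_{j\alpha}z_{k\beta},
$$
and the remaining identities follow line by line in exactly the same manner. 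The only genuinely delicate point — and the feature distinguishing this from the $\SU n$ computation of Lemma \ref{lemma-SU} — is the sign bookkeeping: here $E_{2n}=I_{2n}/\sqrt{2n}$ carries no factor $i$ and is timelike, so the correction term adds $+1/(2n)$ rather than subtracting $1/(2n)$, turning the naive $4n^2+1$ into the correct $4n^2-1$. Once this is pinned down the argument is routine, so I expect no further obstacle.
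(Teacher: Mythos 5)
Your proposal is correct and follows essentially the same route as the paper: split $\glh n=\sus{2n}\oplus\l$ with $\l$ spanned by $E_{2n}=I_{2n}/\sqrt{2n}$, observe $g(E_{2n},E_{2n})=-1$, so that $\tau=\hat\tau+E_{2n}^2$ and $\kappa=\hat\kappa+E_{2n}(\cdot)\,E_{2n}(\cdot)$, then substitute the $\GLH n$ values from Proposition \ref{proposition-GLH} together with $E_{2n}(z_{j\alpha})=z_{j\alpha}/\sqrt{2n}$. Your explicit sign bookkeeping (timelike direction, no factor of $i$) is exactly the point the paper leaves implicit, and it checks out.
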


\begin{proof}
Let $\hat\tau$ and $\hat\kappa$ denote the tension field and the conformality operator on $\GLH n$, respectively.  Then it follows from Proposition \ref{proposition-GLH} and the orthogonal decomposition $\glh n=\sus {2n} \oplus \l$ that
\begin{eqnarray*}
\tau(z_{j\alpha})&=&\hat{\tau}(z_{j\alpha})+E_n^2(z_{j\alpha})\\
&=& -2n\cdot  z_{j\alpha}+\frac{1}{2n}\cdot z_{j\alpha}\\
&=&-\,\frac{(4n^2-1)}{2n}\cdot z_{j\alpha}.
\end{eqnarray*}
Similarly, we have 
\begin{eqnarray*}
\kappa(z_{j\alpha},z_{k\beta})
&=&\hat{\kappa}(z_{j\alpha},z_{k\beta})
+E_n(z_{j\alpha})\cdot E_n(z_{k\beta})\\
&=&-\,(z_{k\alpha}\cdot z_{j\beta}
-\frac{1}{2n}\cdot z_{j\alpha}\cdot z_{k\beta}).
\end{eqnarray*}
\end{proof}

\begin{theorem}\label{theorem-eigenfamily-SUs}
Let $u,v$ be a non-zero elements of $\cn^n$. Then the complex $2n$-dimensional vector space 
$$\E_{uv}=\{\phi_{ab}:\SUs{2n}\to\cn\ |\ \phi_{ab}(g)=\trace (u^taz^t+v^tbw^t),\  a,b\in \cn^n\}$$ 
is an eigenfamily on $\SUs{2n}$ such that for all $\phi,\psi\in\E_{uv}$ we have 
$$\tau(\phi)=-\,\frac{(4n^2-1)}{2n}\cdot\phi\ \ 
\text{and}\ \  
\kappa(\phi,\psi)=-\,\frac{(2n-1)}{2n}\cdot\phi\cdot\psi.$$
\end{theorem}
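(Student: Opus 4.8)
The plan is to obtain Theorem~\ref{theorem-eigenfamily-SUs} from Theorem~\ref{theorem-eigenfamily-GLH} in precisely the manner in which Proposition~\ref{proposition-SUs} is obtained from Proposition~\ref{proposition-GLH}, namely by transporting the computation from $\GLH n$ to $\SUs{2n}$ along the orthogonal splitting $\glh n=\sus{2n}\oplus\l$, where $\l$ is the real line spanned by $E_{2n}=I_{2n}/\sqrt{2n}$, a vector with $g(E_{2n},E_{2n})=-1$. So the three steps are: first read off the behaviour of $\E_{uv}$ on $\GLH n$ from Theorem~\ref{theorem-eigenfamily-GLH}; then account for the single additional basis direction $E_{2n}$; and finally observe that $E_{2n}$ acts on every member of $\E_{uv}$ as a scalar.

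First I would note that, regarded as a set of functions on $\SUs{2n}$, the family $\E_{uv}$ is just the restriction of the eigenfamily of the same name on $\GLH n$. Writing $\hat\tau$ and $\hat\kappa$ for the tension field and conformality operator of $\GLH n$, Theorem~\ref{theorem-eigenfamily-GLH} therefore gives $\hat\tau(\phi)=-\,2n\cdot\phi$ and $\hat\kappa(\phi,\psi)=-\,\phi\cdot\psi$ for all $\phi,\psi\in\E_{uv}$. Next, since any orthonormal basis of $\glh n$ consists of an orthonormal basis of $\sus{2n}$ together with the single vector $E_{2n}$, and since $g(E_{2n},E_{2n})=-1$, the expressions for $\tau$ and $\kappa$ recorded in Section~\ref{section-GLC} yield, on $\SUs{2n}$,
$$\tau(\phi)=\hat\tau(\phi)+E_{2n}^2(\phi)\quad\text{and}\quad\kappa(\phi,\psi)=\hat\kappa(\phi,\psi)+E_{2n}(\phi)\cdot E_{2n}(\psi),$$
exactly as in the proof of Proposition~\ref{proposition-SUs}.

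The only remaining ingredient is the value of the $E_{2n}$-terms on $\E_{uv}$. Each $\phi_{ab}(g)=\trace(u^taz^t+v^tbw^t)=\sum_{j,\alpha}(u^ta)_{j\alpha}\,z_{j\alpha}+\sum_{j,\alpha}(v^tb)_{j\alpha}\,w_{j\alpha}$ is homogeneous of degree one in the matrix elements $z_{j\alpha},w_{j\alpha}$, and $\exp(sE_{2n})=e^{s/\sqrt{2n}}\,I_{2n}$, so that
$$\phi_{ab}\bigl(g\exp(sE_{2n})\bigr)=\phi_{ab}\bigl(e^{s/\sqrt{2n}}\,g\bigr)=e^{s/\sqrt{2n}}\cdot\phi_{ab}(g).$$
Taking the first and second $s$-derivatives at $s=0$ gives $E_{2n}(\phi)=\tfrac{1}{\sqrt{2n}}\cdot\phi$ and $E_{2n}^2(\phi)=\tfrac{1}{2n}\cdot\phi$ for every $\phi\in\E_{uv}$. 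Substituting into the two identities above produces
$$\tau(\phi)=\Bigl(-\,2n+\tfrac{1}{2n}\Bigr)\cdot\phi=-\,\frac{(4n^2-1)}{2n}\cdot\phi,\qquad\kappa(\phi,\psi)=\Bigl(-\,1+\tfrac{1}{2n}\Bigr)\cdot\phi\cdot\psi=-\,\frac{(2n-1)}{2n}\cdot\phi\cdot\psi,$$
which is the assertion.

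I do not expect any serious obstacle along this route: once Theorem~\ref{theorem-eigenfamily-GLH} is granted, the only things to verify are the orthogonal splitting $\glh n=\sus{2n}\oplus\l$ (already in place) and the scalar action of $E_{2n}$, both immediate. The more self-contained alternative — to bypass Theorem~\ref{theorem-eigenfamily-GLH} and argue directly from Proposition~\ref{proposition-SUs} in the style of Theorem~\ref{theorem-eigenfamily-SLC}, by expanding $\phi_{ab}$ in the $z_{j\alpha},w_{j\alpha}$ and inserting the formulae of Proposition~\ref{proposition-SUs} — costs more: there the mixed $z$--$w$ products must first be reassembled into $\phi_{ab}\cdot\phi_{cd}$ using that $u^ta$ and $v^tb$ have rank one, hence mutually proportional columns, and only afterwards can the $\tfrac{1}{2n}$-corrections be collected. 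That reassembly is the single place where care is required.
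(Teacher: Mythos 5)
Your argument is correct, but it takes a genuinely different route from the paper's. The paper proves this theorem ``in exactly the same way as Theorem~\ref{theorem-eigenfamily-SLC}'': expand $\phi_{ab}$ and $\phi_{cd}$ in the matrix elements $z_{j\alpha},w_{j\alpha}$, insert the formulae of Proposition~\ref{proposition-SUs}, and cancel the transposed products via the vanishing $2\times 2$ minors of the rank-one coefficient matrices $u^ta$, $v^tb$ --- precisely the ``reassembly'' you flag as the delicate step of your self-contained alternative. You instead take Theorem~\ref{theorem-eigenfamily-GLH} as input and push the whole eigenfamily, rather than the individual matrix elements, through the splitting $\glh n=\sus{2n}\oplus\l$: since $g(E_{2n},E_{2n})=-1$ one has $\tau(\phi)=\hat\tau(\phi)+E_{2n}^2(\phi)$ and $\kappa(\phi,\psi)=\hat\kappa(\phi,\psi)+E_{2n}(\phi)\cdot E_{2n}(\psi)$ on the subgroup, and since each $\phi\in\E_{uv}$ is homogeneous of degree one in the entries, $E_{2n}$ acts on it as the scalar $1/\sqrt{2n}$; this yields $\lambda=-2n+\tfrac 1{2n}$ and $\mu=-1+\tfrac 1{2n}$, matching the claimed values. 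The advantage of your route is that the rank-one cancellation is performed only once (it is already packaged in Theorem~\ref{theorem-eigenfamily-GLH}) and the passage to the codimension-one subgroup becomes a uniform scalar correction, which also makes transparent why $\lambda$ and $\mu$ each shift by exactly $+\tfrac 1{2n}$; the cost is reliance on Theorem~\ref{theorem-eigenfamily-GLH}, whose proof the paper likewise leaves to the reader. The only point worth stating explicitly is the routine fact that for $Z\in\sus{2n}$ and $p\in\SUs{2n}$ the left-invariant derivatives $Z(\phi)(p)$, $Z^2(\phi)(p)$ computed in $\GLH n$ coincide with those computed in the subgroup, so that the identities of Theorem~\ref{theorem-eigenfamily-GLH} do restrict to $\SUs{2n}$.
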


\begin{proof}
Here the statement can be proven in the exactly the same way as that of Theorem \ref{theorem-eigenfamily-SLC}.
\end{proof}


\section{The Semi-Riemannian Lie Group $\SOC n$}
\label{section-SOC}


The semisimple complex special orthogonal group $\SOC n$ is the subgroup of $\GLC n$ defined by 
$$\SOC n=\{z\in\SLC n\,|\, z\cdot z^t=I_n\}.$$
Its Lie algebra 
$$\soC n=\{Z\in\glc n\,|\,Z+Z^t=0\}$$ 
has the orthogonal decomposition 
$$\soC n=\so n\oplus i\cdot\so n,$$
where $\so n$ is the Lie algebra of the special orthogonal group $\SO n$  consisting of the real skew-symmetric $n\times n$ matrices.  The restriction of the semi-Riemannian metric $g$ on $\SOC n$ to its maximal compact subgroup $\SO n$ is its standard Riemannian metric.  For this we have the following result, see Lemma 4.1 of \cite{Gud-Sak-1}.

\begin{lemma}\label{lemma-SO}
Let $x_{j\alpha}:\SO n\to\mathbb{R}$ be the real-valued matrix elements of the standard representation of the special orthogonal group $\SO n$. Then the tension field $\tau$ and the conformality operator $\kappa$ on $\SO n$ satisfy the following relations
\begin{eqnarray*}
\tau(x_{j\alpha})&=&-\,\frac{(n-1)}2\cdot x_{j\alpha},\\
\kappa(x_{j\alpha},x_{k\beta})
&=&-\,\frac 12\cdot ( x_{j\beta}\cdot  x_{k\alpha}
-\delta_{jk}\cdot \delta_{\alpha\beta}). 
\end{eqnarray*}
\end{lemma}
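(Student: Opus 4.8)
The plan is to compute $\tau$ and $\kappa$ on $\SO n$ directly using the formulas
$$\tau(\phi)=\sum_{Z\in\B_{\so n}}Z^2(\phi),\qquad \kappa(\phi,\psi)=\sum_{Z\in\B_{\so n}}Z(\phi)\cdot Z(\psi),$$
valid because $\SO n$ is a subgroup of $\GLC n$ and, by the Koszul computation already carried out in the excerpt, $\nab ZZ=0$ for all left-invariant $Z$ (alternatively, this is just Lemma 4.1 of \cite{Gud-Sak-1} as noted, but I would reprove it for completeness). Here the standard orthonormal basis is $\B_{\so n}=\{Y_{rs}\mid 1\le r<s\le n\}$, with $g(Y_{rs},Y_{rs})=1$ since the metric on the compact subgroup is Riemannian. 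The key observation is that for the matrix coordinate $x_{j\alpha}:\SO n\to\rn$, left translation gives
$$Y_{rs}(x_{j\alpha})(p)=\frac{d}{dt}\bigl(p\exp(tY_{rs})\bigr)_{j\alpha}\Big|_{t=0}=(p\,Y_{rs})_{j\alpha}=e_j\,p\,Y_{rs}\,e_\alpha^t,$$
and similarly $Y_{rs}^2(x_{j\alpha})(p)=e_j\,p\,Y_{rs}^2\,e_\alpha^t$, writing $p$ for a generic point of $\SO n$ identified with its matrix. So everything reduces to evaluating the two matrix sums $\sum_{r<s}Y_{rs}^2$ and $\sum_{r<s}Y_{rs}E_{\alpha\beta}Y_{rs}$ (the latter appearing after writing $Y_{rs}(x_{j\alpha})Y_{rs}(x_{k\beta})=e_j p Y_{rs}E_{\alpha\beta}Y_{rs}^t p^t e_k^t$ and using $Y_{rs}^t=-Y_{rs}$).

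The main computational step is therefore the two Casimir-type identities. First, since $Y_{rs}=\tfrac1{\sqrt2}(E_{rs}-E_{sr})$ we get $Y_{rs}^2=-\tfrac12(E_{rr}+E_{ss})$, so
$$\sum_{1\le r<s\le n}Y_{rs}^2=-\frac12\sum_{r<s}(E_{rr}+E_{ss})=-\frac{n-1}{2}\,I_n,$$
because each index $t$ occurs in exactly $n-1$ of the pairs. This immediately yields $\tau(x_{j\alpha})=e_j\,p\,\bigl(-\tfrac{n-1}2 I_n\bigr)\,e_\alpha^t=-\tfrac{n-1}2\,x_{j\alpha}$ once one uses the group relation $p\,p^t=I_n$ only trivially here — actually one just reads off $e_j p e_\alpha^t=x_{j\alpha}$. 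Second, for $\kappa$ one computes $\sum_{r<s}Y_{rs}E_{\alpha\beta}Y_{rs}$: expanding $Y_{rs}E_{\alpha\beta}Y_{rs}=\tfrac12(E_{rs}-E_{sr})E_{\alpha\beta}(E_{rs}-E_{sr})$ and using $E_{ab}E_{cd}=\delta_{bc}E_{ad}$, the surviving terms give, after summing over all $r<s$, an expression of the form $-\tfrac12(E_{\beta\alpha}-\delta_{\alpha\beta}I_n)$; then
$$\kappa(x_{j\alpha},x_{k\beta})=-\sum_{r<s}e_j p\,Y_{rs}E_{\alpha\beta}Y_{rs}\,p^t e_k^t=-\frac12\,e_j p(E_{\beta\alpha}-\delta_{\alpha\beta}I_n)p^t e_k^t=-\frac12\bigl(x_{j\beta}x_{k\alpha}-\delta_{jk}\delta_{\alpha\beta}\bigr),$$
where the last equality uses $e_j p\,E_{\beta\alpha}\,p^t e_k^t=x_{j\beta}x_{k\alpha}$ and the orthogonality relation $p\,p^t=I_n$, i.e. $e_j p\,p^t e_k^t=\delta_{jk}$ — this is the one place where membership in $\SO n$ (not just $\GLR n$) genuinely enters.

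The hard part is the bookkeeping in the second sum $\sum_{r<s}Y_{rs}E_{\alpha\beta}Y_{rs}$: one must carefully track the four products $E_{rs}E_{\alpha\beta}E_{rs}$, $E_{rs}E_{\alpha\beta}E_{sr}$, $E_{sr}E_{\alpha\beta}E_{rs}$, $E_{sr}E_{\alpha\beta}E_{sr}$, apply $E_{ab}E_{cd}=\delta_{bc}E_{ad}$ to each, and then sum over the index set $\{(r,s):r<s\}$, splitting into the cases according to whether $\alpha$ or $\beta$ coincides with $r$ or $s$. This is routine but error-prone, and it is where the Kronecker-delta term $\delta_{jk}\delta_{\alpha\beta}$ is produced; one should double-check the result against the known $\GLR n$ identity $\kappa(x_{j\alpha},x_{k\beta})=-x_{k\alpha}x_{j\beta}$ in the appropriate limit and against trace normalizations. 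Everything else — the reduction to matrix sums, the use of $\nab ZZ=0$, and the identification of $Z(x_{j\alpha})$ — is immediate from the setup already established in the excerpt.
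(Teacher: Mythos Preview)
Your proof is correct. The paper itself does not prove this lemma --- it simply records it as Lemma~4.1 of \cite{Gud-Sak-1} --- so your direct computation via the orthonormal basis $\{Y_{rs}\}$ of $\so n$ is precisely the argument that reference contains, spelled out in full. One small slip: in the prose you state that $\sum_{r<s}Y_{rs}E_{\alpha\beta}Y_{rs}=-\tfrac12(E_{\beta\alpha}-\delta_{\alpha\beta}I_n)$, but the correct sign is $+\tfrac12$ (for instance, with $n=2$ and $\alpha=\beta=1$ one gets $Y_{12}E_{11}Y_{12}=-\tfrac12 E_{22}=\tfrac12(E_{11}-I_2)$). You silently repair this in the subsequent displayed line, where the overall minus from $Y_{rs}^t=-Y_{rs}$ then produces the right answer, so the final formula for $\kappa$ is correct. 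Otherwise the bookkeeping you outline --- each diagonal index appearing $n-1$ times in $\sum_{r<s}(E_{rr}+E_{ss})$, and the case split $\alpha=\beta$ versus $\alpha\neq\beta$ in the $\kappa$ sum --- is exactly right, and your remark that $p\,p^t=I_n$ is the only place the orthogonality constraint genuinely enters is on point.
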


\begin{proposition}\label{proposition-SOC}
Let $z_{j\alpha}:\SOC n\to\mathbb{C}$ be the complex-valued matrix elements of the standard representation of the complex special orthogonal group $\SOC n$. Then the tension field $\tau$ and the conformality operator $\kappa$ on $\SOC n$ satisfy the following relations
\begin{eqnarray*}
\tau(z_{j\alpha})&=&-\,(n-1)\cdot z_{j\alpha},\\
\kappa(z_{j\alpha},z_{k\beta})
&=&-\,( z_{j\beta}\cdot z_{k\alpha}
-\delta_{jk}\cdot \delta_{\alpha\beta}). 
\end{eqnarray*}
\end{proposition}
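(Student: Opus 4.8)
The plan is to follow the pattern already used for Propositions \ref{proposition-GLC} and \ref{proposition-SLC}: transfer the known Riemannian identities on the maximal compact subgroup $\SO n$, recorded in Lemma \ref{lemma-SO}, to the complex group $\SOC n$ by means of the orthogonal splitting $\soC n=\so n\oplus i\cdot\so n$ together with the sign pattern of the induced semi-Riemannian metric.

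First I would fix an orthonormal basis $\B^+$ of $\so n$ for the (positive definite) restricted metric. Since $g(iZ,iZ)=-\Re\trace((iZ)^2)=\Re\trace(Z^2)=-g(Z,Z)$ for $Z\in\so n$, the set $\B^-=i\cdot\B^+$ is a pseudo-orthonormal basis of $i\cdot\so n$ with $g(W,W)=-1$ for every $W\in\B^-$, and $\B=\B^+\cup\B^-$ is an orthonormal basis of $\soC n$. Using the formulae for $\tau$ and $\kappa$ on a subgroup of $\GLC n$ recorded in Section \ref{section-GLC} (valid here because $\nab ZZ=0$ for left-invariant $Z$), this gives
$$
\tau(z_{j\alpha})=\sum_{Z\in\B^+}Z^2(z_{j\alpha})-\sum_{Z\in\B^+}(iZ)^2(z_{j\alpha}),\qquad
\kappa(z_{j\alpha},z_{k\beta})=\sum_{Z\in\B^+}Z(z_{j\alpha})Z(z_{k\beta})-\sum_{Z\in\B^+}(iZ)(z_{j\alpha})(iZ)(z_{k\beta}).
$$

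The key observation is that the matrix elements are holomorphic, so a left-invariant field $Z\in\soC n$ acts by $Z(z_{j\alpha})(p)=(pZ)_{j\alpha}$; hence $(iZ)(z_{j\alpha})=i\cdot Z(z_{j\alpha})$ and $(iZ)^2(z_{j\alpha})=-Z^2(z_{j\alpha})$. Substituting this, the second sum in each display above turns into $+\sum_{Z\in\B^+}Z^2(z_{j\alpha})$ respectively $+\sum_{Z\in\B^+}Z(z_{j\alpha})Z(z_{k\beta})$, so the $i\cdot\so n$ part doubles the $\so n$ part rather than cancelling it. Since $z_{j\alpha}$ restricts on $\SO n$ to the real matrix element $x_{j\alpha}$ and the fields of $\B^+$ are tangent to $\SO n$, the sums over $\B^+$ are exactly the Riemannian tension field and conformality operator of Lemma \ref{lemma-SO} evaluated on matrix elements; the resulting expressions in the entries of the argument then persist on all of $\SOC n$, the constant $\delta_{jk}\cdot\delta_{\alpha\beta}$ arising from the group relation $z\cdot z^t=I_n$. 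Doubling the formulae of Lemma \ref{lemma-SO} yields $\tau(z_{j\alpha})=-(n-1)\cdot z_{j\alpha}$ and $\kappa(z_{j\alpha},z_{k\beta})=-(z_{j\beta}\cdot z_{k\alpha}-\delta_{jk}\cdot\delta_{\alpha\beta})$, as claimed.

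The only delicate point — and the place where a sign slip would be fatal — is the bookkeeping showing that the $i\cdot\so n$ summand doubles the compact contribution instead of annihilating it; this is precisely the interplay of $g(iZ,iZ)=-1$ with $(iZ)^2(z_{j\alpha})=-Z^2(z_{j\alpha})$, exactly as in the proof of Proposition \ref{proposition-GLC}. Everything else is a verbatim transcription of that argument, so in the actual write-up I would simply state that the assertion is an immediate consequence of Lemma \ref{lemma-SO} and of how the semi-Riemannian metric is defined on the complex Lie algebra $\soC n=\so n\oplus i\cdot\so n$.
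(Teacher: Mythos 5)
Your proposal is correct and follows exactly the route the paper takes: the paper's proof is the one-line remark that the claim is an immediate consequence of Lemma \ref{lemma-SO} and the decomposition $\soC n=\so n\oplus i\cdot\so n$, and your write-up simply supplies the underlying bookkeeping (the sign $g(iZ,iZ)=-g(Z,Z)$ cancelling against $(iZ)^2=-Z^2$ on holomorphic matrix elements, so the non-compact summand doubles the compact contribution). All the details, including the appeal to $z\cdot z^t=I_n$ for the term $\delta_{jk}\delta_{\alpha\beta}$, check out.
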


\begin{proof}
This is an immediate consequence of Lemma \ref{lemma-SO} and the fact how the semi-Riemannian metric is defined on the complex Lie algebra $$\soC n=\so n\oplus i\cdot\so n.$$
\end{proof}

\begin{theorem}\label{theorem-eigenfamily-SOC}
Let $v\in\cn^n$ be a non-zero isotropic element i.e. $(v,v)=0$,
then the complex $n$-dimensional vector space
$$\E_v=\{\phi_a:\SOC n\to\cn\ |\ \phi_a(z)=\trace (v^taz^t),\
a\in \cn^n\}$$ is an eigenfamily on $\SOC n$ such that for all $\phi,\psi\in\E_v$ we have 
$$\tau(\phi)=-\,(n-1)\cdot\phi,\ \ 
\kappa(\phi,\psi)=-\,\phi\cdot \psi.$$
\end{theorem}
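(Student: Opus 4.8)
The plan is to deduce everything from Proposition \ref{proposition-SOC} using the $\cn$-linearity of $\tau$ and the $\cn$-bilinearity of $\kappa$, in the same spirit as the proof of Theorem \ref{theorem-eigenfamily-SLC}. I would first write a generic element of $\E_v$ as $\phi_a(z)=\trace(v^taz^t)=\sum_{j,\alpha}A_{j\alpha}z_{j\alpha}$ with $A=v^ta$, so that $A_{j\alpha}=v_j\,a_\alpha$; the rank-one structure of $A$ (and likewise of $B=v^tb$) is what makes the computation close up.

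For the tension field, linearity together with Proposition \ref{proposition-SOC} gives at once
$$\tau(\phi_a)=\sum_{j,\alpha}A_{j\alpha}\,\tau(z_{j\alpha})=-(n-1)\sum_{j,\alpha}A_{j\alpha}\,z_{j\alpha}=-(n-1)\cdot\phi_a,$$
with no hypothesis on $v$ required.

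For the conformality operator I would put $\phi=\phi_a$, $\psi=\phi_b$, expand bilinearly, and insert $\kappa(z_{j\alpha},z_{k\beta})=-(z_{j\beta}z_{k\alpha}-\delta_{jk}\delta_{\alpha\beta})$ from Proposition \ref{proposition-SOC}. This yields two terms. The first, $-\sum_{j,\alpha,k,\beta}A_{j\alpha}B_{k\beta}\,z_{j\beta}z_{k\alpha}$, factorises after substituting $A_{j\alpha}=v_j a_\alpha$ and $B_{k\beta}=v_k b_\beta$: the indices separate and the sum becomes $\big(\sum_{j,\beta}v_j b_\beta\,z_{j\beta}\big)\big(\sum_{k,\alpha}v_k a_\alpha\,z_{k\alpha}\big)=\phi_b\cdot\phi_a$, so this contribution equals $-\phi\cdot\psi$. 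The second term is $\sum_{j,\alpha,k,\beta}A_{j\alpha}B_{k\beta}\,\delta_{jk}\delta_{\alpha\beta}=\sum_{j,\alpha}v_j^2\,a_\alpha b_\alpha=(v,v)\cdot\big(\sum_\alpha a_\alpha b_\alpha\big)$, which vanishes exactly because $v$ is isotropic. Hence $\kappa(\phi,\psi)=-\phi\cdot\psi$, so $\E_v$ is an eigenfamily with $\lambda=-(n-1)$ and $\mu=-1$.

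The argument is a short direct computation, so I expect no genuine obstacle; the only points to watch are the index bookkeeping in the factorisation of the quartic term and the observation — the one place the hypothesis is used — that $(v,v)=0$ is precisely what annihilates the Kronecker-delta correction term in the $\kappa$-formula of Proposition \ref{proposition-SOC}. This is also why isotropy of $v$ is needed here, whereas it was not needed for $\GLC n$ or $\SLC n$, whose conformality formulae contain no analogous term.
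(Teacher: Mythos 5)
Your computation is correct and complete: linearity of $\tau$ gives the eigenvalue $-(n-1)$ with no condition on $v$, the rank-one structure $A_{j\alpha}=v_ja_\alpha$ makes the quartic term factor as $-\phi_b\cdot\phi_a$, and the Kronecker-delta correction reduces to $(v,v)\cdot(a,b)$, which is exactly where the isotropy hypothesis enters. The paper's own proof takes a slightly different (and terser) route: it does not compute on $\SOC n$ at all, but invokes the orthogonal decomposition $\soC n=\so n\oplus i\cdot\so n$ and cites Theorem 4.3 of \cite{Gud-Sak-1}, i.e.\ the analogous eigenfamily statement already established for the compact group $\SO n$, with the complexification doubling the constants. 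Your argument is the self-contained version: it deduces everything from Proposition \ref{proposition-SOC} inside this paper, which is arguably preferable since it makes explicit \emph{why} isotropy is needed here but not for $\GLC n$ or $\SLC n$ --- a point the citation-based proof leaves hidden.
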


\begin{proof}
The Lie algebra $\soC n$ of the complex special linear group $\SOC n$ is the complexification of $\so n$ and we have the orthogonal decomposition
$$\soC n=\so n\oplus i\cdot\so n.$$
Hence the statement is an immediate consequence of Theorem 4.3 of  \cite{Gud-Sak-1}.
\end{proof}


\section{The Semi-Riemannian Lie Group $\SpC n$}
\label{section-SpC}


The semisimple complex symplectic group $\SpC n$ is the subgroup of $\SLC {2n}$ with
$$\SpC n=\{z\in\SLC {2n}\,|\, z\,J_{n}\,z^t=J_{n}\}$$
and Lie algebra 
$$\spC n=\{Z\in\slc {2n}\,|\, Z\,J_{n}+J_{n}\,Z^t=0\}.$$ 
The maximal compact subgroup of $\SpC n$ is the quaternionic unitary group 
$$\Sp n=\{z\in\SpC n\, | \ z\,\bar z^t=I_{2n}\}$$
with Lie algebra 
$$\sp n=\{Z\in\spC n\ |\, Z+\bar Z^t=0\}.$$
For $\spC n$ we have the orthogonal decomposition 
$$\spC n=\sp n\oplus i\cdot\sp n.$$
The restriction of the semi-Riemannian metric $g$ on $\SpC n$ to $\Sp n$ is its standard Riemannian metric.  For this we have the following result, see Lemma 6.1 of \cite{Gud-Sak-1} and Lemma 6.1 of \cite{Gud-Mon-Rat-1}.

\begin{lemma}\label{lemma-Sp}
Let $z_{j\alpha},w_{j\alpha}:\Sp n\to\mathbb{C}$ be the complex-valued matrix elements of the standard representation of the quaternionic unitary group $\Sp n$. Then the tension field $\tau$ and the conformality operator $\kappa$ on $\Sp n$ satisfy the following relations
$$
\tau(z_{j\alpha})= -\,\frac{2n+1}2\cdot z_{j\alpha},
\ \ \tau(w_{k \beta})=-\,\frac{2n+1}2\cdot w_{k \beta},
$$
$$
\kappa(z_{j\alpha},z_{k\beta})
=-\,\frac 12\cdot z_{k\alpha}\cdot z_{j\beta},\ \
\kappa(w_{j\alpha},w_{k\beta})
=-\,\frac 12\cdot w_{k\alpha}\cdot w_{j\beta},
$$
$$
\kappa(z_{j\alpha},w_{k\beta})
=-\,\frac 12\cdot z_{k\alpha}\cdot w_{j\beta}.
$$
\end{lemma}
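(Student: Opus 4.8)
The plan is to use that $\Sp n$ is compact, so the metric it inherits from $\GLC{2n}$ is Riemannian, and that, as for any subgroup of $\GLC{2n}$, one has $\nab ZZ=0$ for every left-invariant field $Z$; hence on $\Sp n$
$$\tau(\phi)=\sum_{Z\in\B^+}Z^2(\phi),\qquad \kappa(\phi,\psi)=\sum_{Z\in\B^+}Z(\phi)\cdot Z(\psi),$$
where $\B^+$ is the orthonormal basis of $\sp n$ exhibited in \secref{section-GLH}. Realising an element of $\Sp n$ as the $2n\times 2n$ matrix $g$ of its standard representation, the matrix coefficients are $z_{j\alpha}=g_{j\alpha}$ and $w_{j\alpha}=g_{j,n+\alpha}$, while for every $Z\in\sp n$ we have $Z(g_{ab})(g)=(gZ)_{ab}$ and $Z^2(g_{ab})(g)=(gZ^2)_{ab}$. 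Thus the whole computation reduces to understanding two matrices assembled from $\B^+$.

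First I would treat the tension field. Here $\tau(g_{ab})=(g\,\Omega)_{ab}$ with $\Omega:=\sum_{Z\in\B^+}Z^2$, and since $\B^+$ is orthonormal for the invariant form $(Z,W)\mapsto-\trace(ZW)$ on $\sp n$, the matrix $\Omega$ is the Casimir operator of $\sp n$ acting on the irreducible standard representation $\cn^{2n}$; by Schur's lemma it is a scalar $c\cdot I_{2n}$, and comparing traces — using $\trace(Z^2)=-1$ for $Z\in\B^+$ and $\dim\sp n=n(2n+1)$ — gives $c=-\tfrac{2n+1}{2}$. This yields $\tau(z_{j\alpha})=-\tfrac{2n+1}{2}z_{j\alpha}$ and the corresponding identity for $w_{j\alpha}$. (Alternatively one can grind this out directly from $\B^+$ as in the proof of Proposition \ref{proposition-GLH}, via the block sums $\sum_{r<s}X_{rs}^2$, $\sum_{r<s}Y_{rs}^2$ and $\sum_tD_t^2$.)

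Next the conformality operator, which is the delicate part. Writing $Z(g_{ac})(g)=\sum_pg_{ap}Z_{pc}$ gives
$$\kappa(g_{ac},g_{bd})=\sum_{p,q=1}^{2n}g_{ap}\,g_{bq}\,T_{pc,qd},\qquad T_{pc,qd}:=\sum_{Z\in\B^+}Z_{pc}\,Z_{qd}.$$
The tensor $T$ is $\Sp n$-invariant, since $\sum_{Z}Z\otimes Z$ over an orthonormal basis of $\sp n$ does not depend on the chosen basis. Identifying $\cn^{2n}$ with its dual through the symplectic form $J_n$ and invoking the first fundamental theorem for $\Sp n$ (all invariant tensors in $(\cn^{2n})^{\otimes 4}$ are combinations of products of $J_n$), together with the characterisation $Z\in\sp n\iff J_nZ$ is symmetric, one finds
$$T_{pc,qd}=-\tfrac12\Big((J_n)_{pq}(J_n^{-1})_{cd}+\delta_{pd}\,\delta_{qc}\Big),$$
the single constant being pinned down by one contraction — equivalently by evaluating $\kappa$ at the identity, where for each entry only two elements of $\B^+$ contribute. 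Substituting this and using $g\,J_n\,g^t=J_n$, the first term contributes $-\tfrac12(J_n)_{ab}(J_n^{-1})_{cd}$, which vanishes on exactly the index blocks occurring for $z_{j\alpha}$ and $w_{j\alpha}$, since their row indices $a,b$ always lie in $\{1,\dots,n\}$ and the corresponding block of $J_n$ is zero; what survives is precisely $\kappa(z_{j\alpha},z_{k\beta})=-\tfrac12\,z_{k\alpha}z_{j\beta}$, $\kappa(z_{j\alpha},w_{k\beta})=-\tfrac12\,z_{k\alpha}w_{j\beta}$, and $\kappa(w_{j\alpha},w_{k\beta})=-\tfrac12\,w_{k\alpha}w_{j\beta}$.

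The main obstacle is this last step: correctly determining both the shape of the invariant tensor $T$ and the constant $-\tfrac12$, and then recognising that the symplectic piece disappears thanks to $g\,J_n\,g^t=J_n$. The reference-free alternative is the long explicit summation over $\B^+$ carried out in \cite{Gud-Sak-1} and \cite{Gud-Mon-Rat-1}, which is why we are content to quote the result from there.
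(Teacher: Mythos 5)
Your argument is correct, but it takes a genuinely different route from the paper, which gives no proof at all here: Lemma \ref{lemma-Sp} is simply quoted from Lemma 6.1 of \cite{Gud-Sak-1} and Lemma 6.1 of \cite{Gud-Mon-Rat-1}, where (as in the analogous Proposition \ref{proposition-GLH} of this paper) the identities are obtained by summing $Z^2(\phi)$ and $Z(\phi)\cdot Z(\psi)$ explicitly over the orthonormal basis $\B^+$. You replace that computation by representation theory: for $\tau$ you identify $\sum_{Z\in\B^+}Z^2$ with the Casimir element acting on the irreducible standard representation $\cn^{2n}$ and fix the scalar by the trace count $\sum_{Z}\trace(Z^2)=-\dim\sp n=-n(2n+1)$, giving $-(2n+1)/2$; for $\kappa$ you use that $\sum_{Z\in\B^+}Z\otimes Z$ is the basis-independent invariant element of $\sp n\otimes\sp n$. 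Both steps check out. Your tensor $T_{pc,qd}=-\tfrac 12\bigl(\delta_{pd}\delta_{qc}+(J_n)_{pq}(J_n^{-1})_{cd}\bigr)$ can be verified directly by noting that $\sum_Z Z_{pc}Z_{qd}$ is minus the coordinate expression of the orthogonal projection $A\mapsto\tfrac 12(A-J_nA^tJ_n^{-1})$ of $\cn^{2n\times 2n}$ onto $\spC n$ with respect to the trace form; this also tightens your somewhat glib appeal to the first fundamental theorem, since the cleanest reason a single constant suffices is that $\spC n\cong\Sym^2\cn^{2n}$ is irreducible, so $(\spC n\otimes\spC n)^{\Sp n}$ is one-dimensional. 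One small point you use implicitly and should state: $\trace(ZW)$ is automatically real for skew-Hermitian $Z,W$, so the real-orthonormal basis $\B^+$ is also orthonormal for the complex-bilinear extension of $-\trace(ZW)$ to $\spC n$, which is what legitimises the Casimir and projection identifications. Your final observation, that the symplectic term $(J_n)_{ab}(J_n^{-1})_{cd}$ vanishes because the row indices of $z_{j\alpha}$ and $w_{j\alpha}$ both lie in $\{1,\dots,n\}$ where the corresponding block of $J_n$ is zero, is exactly right. What your approach buys is a short, basis-free derivation that explains where the constants $-(2n+1)/2$ and $-1/2$ come from; what the quoted direct summation buys is uniformity with the other groups treated in the paper and independence from Schur's lemma and invariant theory.
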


With this at hand we then yield the following result.

\begin{proposition}\label{proposition-SpC}
Let $z_{j\alpha},w_{j\alpha}:\SpC n\to\mathbb{C}$ be the complex-valued matrix elements of the standard representation of the quaternionic unitary group $\Sp n$. Then the tension field $\tau$ and the conformality operator $\kappa$ on $\Sp n$ satisfy the following relations
$$
\tau(z_{j\alpha})= -\,(2n+1)\cdot z_{j\alpha},\ \ 
\tau(w_{k \beta})= -\,(2n+1)\cdot w_{k \beta},
$$
$$
\kappa(z_{j\alpha},z_{k\beta})=-\, z_{k\alpha}\cdot z_{j\beta},\ \
\kappa(w_{j\alpha},w_{k\beta})=-\, w_{k\alpha}\cdot w_{j\beta},
$$
$$
\kappa(z_{j\alpha},w_{k\beta})=-\, z_{k\alpha}\cdot w_{j\beta}.
$$
\end{proposition}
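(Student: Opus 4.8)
The plan is to reduce the statement to Lemma \ref{lemma-Sp} by means of the orthogonal splitting $\spC n=\sp n\oplus i\cdot\sp n$, following the same pattern already used for $\GLC n$ in Proposition \ref{proposition-GLC} and for $\SLC n$ in Proposition \ref{proposition-SLC}. Concretely, I would fix an orthonormal basis $\{Z_a\}$ of $\sp n$, so $g(Z_a,Z_a)=1$. Since $g(Z,W)=-\Re\trace(ZW)$, one has $g(iZ,iZ)=\Re\trace(Z^2)=-g(Z,Z)$ for every $Z\in\sp n$, so $\{iZ_a\}$ is an orthonormal basis of $i\cdot\sp n$ with $g(iZ_a,iZ_a)=-1$, and $\B=\{Z_a\}\cup\{iZ_a\}$ is an orthonormal basis of $\spC n$. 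Because $\SpC n$ is a subgroup of $\GLC{2n}$, the computation in Section \ref{section-GLC} gives $\nab ZZ=0$ for left-invariant $Z$, so the operators on $\SpC n$ satisfy
$$\tau(\phi)=\sum_a Z_a^2(\phi)-\sum_a (iZ_a)^2(\phi),\qquad \kappa(\phi,\psi)=\sum_a Z_a(\phi)\,Z_a(\psi)-\sum_a (iZ_a)(\phi)\,(iZ_a)(\psi).$$

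The key point is then that the matrix elements $z_{j\alpha},w_{j\alpha}$ are $\cn$-linear coordinate functions on $\SpC n\subset\cn^{2n\times 2n}$, hence holomorphic in the sense that $(iZ)(z_{j\alpha})=i\cdot Z(z_{j\alpha})$: indeed $\frac{d}{ds}(z\exp(s\cdot iZ))_{j\alpha}|_{s=0}=(z\cdot iZ)_{j\alpha}=i\,(zZ)_{j\alpha}$, and likewise $(iZ)^2(z_{j\alpha})=-Z^2(z_{j\alpha})$, with the same holding for $w$ in place of $z$. Substituting into the two displays, the sign coming from $g(iZ_a,iZ_a)=-1$ and the sign coming from $i^2=-1$ reinforce each other, so that $\tau$ and $\kappa$ on $\SpC n$ are exactly twice the tension field and conformality operator on $\Sp n$ evaluated on these matrix elements. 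Lemma \ref{lemma-Sp} then yields $\tau(z_{j\alpha})=2\cdot(-\tfrac{2n+1}{2})z_{j\alpha}=-(2n+1)z_{j\alpha}$ and $\kappa(z_{j\alpha},z_{k\beta})=2\cdot(-\tfrac12 z_{k\alpha}z_{j\beta})=-z_{k\alpha}z_{j\beta}$, and the identities for $w_{j\alpha}$ and the mixed term $\kappa(z_{j\alpha},w_{k\beta})$ follow verbatim.

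I do not expect a genuine obstacle here: the argument is entirely formal once the holomorphy relation $(iZ)\phi=i\,Z\phi$ for the matrix elements is in hand. The only spot deserving care is the bookkeeping of the two independent sign reversals — one from the indefinite metric on $i\cdot\sp n$, the other from $i^2=-1$ — which must be checked to combine into an overall factor $2$ rather than $0$, exactly as in the passage from Lemma \ref{lemma-U} to Proposition \ref{proposition-GLC}. Accordingly the proof should amount to a single sentence: the result is an immediate consequence of Lemma \ref{lemma-Sp} together with the definition of the semi-Riemannian metric on $\spC n=\sp n\oplus i\cdot\sp n$.
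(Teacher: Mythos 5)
Your proposal is correct and follows exactly the route the paper takes: the published proof simply states that the result is an immediate consequence of Lemma \ref{lemma-Sp} and of how the semi-Riemannian metric is defined on $\spC n=\sp n\oplus i\cdot\sp n$, which is precisely the doubling argument you spell out. Your sign bookkeeping (the factor $-1$ from $g(iZ,iZ)=-1$ cancelling against $i^2=-1$ to give an overall factor $2$) is the correct content behind that one-line proof.
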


\begin{proof}
This is an immediate consequence of Lemma \ref{lemma-Sp} and  how the semi-Riemannian metric is defined on the complex Lie algebra 
$$\spC n=\sp n\oplus i\cdot\sp n.$$
\end{proof}

\begin{theorem}\label{theorem-eigenfamily-SpC}
Let $u,v\in\cn^n$ be a non-zero elements of $\cn^n$,
then the complex $2n$-dimensional vector space
$$\E_{uv}=\{\phi_{ab}:\SpC n\to\cn\ |\ \phi_{ab}(g)=\trace (u^taz^t+v^tbw^t),\ a,b\in \cn^n\}$$ 
is an eigenfamily on $\SpC n$ such that for all $\phi,\psi\in\E_{uv}$ we have 
$$\tau(\phi)=-\,(2n+1)\cdot\phi,\ \ \kappa(\phi,\psi)=-\,\phi\cdot\psi.$$
\end{theorem}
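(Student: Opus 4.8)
The plan is to rerun the argument of Theorem~\ref{theorem-eigenfamily-SLC} (in the two-family version already used for $\SUs{2n}$ in Theorem~\ref{theorem-eigenfamily-SUs}), now carrying the matrix elements $z_{j\alpha}$ and $w_{j\alpha}$ along together. Fix $u,v$, take two typical members $\phi=\phi_{ab}$ and $\psi=\phi_{cd}$ of $\E_{uv}$, and set $A=u^ta$, $B=v^tb$, $C=u^tc$, $D=v^td$, so that $\phi=\sum_{j,\alpha}A_{j\alpha}z_{j\alpha}+\sum_{j,\alpha}B_{j\alpha}w_{j\alpha}$ and similarly for $\psi$. Each of $A,B,C,D$ has rank one, hence the minors $A_{j\alpha}C_{k\beta}-A_{k\alpha}C_{j\beta}$ (two entries built from $u$) and $B_{j\alpha}D_{k\beta}-B_{k\alpha}D_{j\beta}$ (two entries built from $v$) all vanish; this is the arithmetical input, exactly as in the proof of Theorem~\ref{theorem-eigenfamily-SLC}.

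The tension field is immediate. Since $\tau$ is complex linear, Proposition~\ref{proposition-SpC} gives $\tau(\phi_{ab})=\sum_{j,\alpha}A_{j\alpha}\tau(z_{j\alpha})+\sum_{j,\alpha}B_{j\alpha}\tau(w_{j\alpha})=-(2n+1)\cdot\phi_{ab}$ with no restriction on $a,b,u,v$, so $\lambda=-(2n+1)$.

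For the conformality operator I would expand $\kappa(\phi_{ab},\phi_{cd})$ by bilinearity into a $z$-$z$, a $w$-$w$ and a mixed $z$-$w$ part and substitute the three identities of Proposition~\ref{proposition-SpC}. Relabelling the summation indices so that each monomial occurs in a fixed order, the $z$-$z$ contribution to $\kappa(\phi_{ab},\phi_{cd})+\phi_{ab}\cdot\phi_{cd}$ collapses to $\sum_{j,\alpha,k,\beta}(A_{j\alpha}C_{k\beta}-A_{k\alpha}C_{j\beta})\,z_{j\alpha}z_{k\beta}=0$ by the minor above, the $w$-$w$ contribution vanishes in the same manner, and — in contrast with the $\SLC n$ and $\SUs{2n}$ cases — Proposition~\ref{proposition-SpC} carries no $1/n$ correction term, so here there is no extra bookkeeping in these two parts.

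The main obstacle is the remaining mixed $z$-$w$ part. There the two summations meet the distinct coefficient matrices $A$ (built from $u$) and $D$ (built from $v$), together with $B$ and $C$, so the relevant combination is no longer an honest minor of a single rank-one matrix, and a direct expansion leaves terms of the shape $(u_pv_q-u_qv_p)(\,\cdots\,)\,z_{p\gamma}w_{q\delta}$. To finish one has to invoke the defining relation $z\,J_n\,z^t=J_n$ of $\SpC n$ — equivalently the symmetry $z\,w^t=w\,z^t$ among the matrix elements — and show that it reassembles these mixed terms into $-\,\phi_{ab}\cdot\phi_{cd}$; in carrying this out I would pay particular attention to whether the cancellation really goes through for arbitrary non-zero $u,v$ or whether one in fact needs $u$ and $v$ proportional (the only case in which the mixed terms cancel for the same reason as the $z$-$z$ and $w$-$w$ parts). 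Once the mixed part is settled, $\tau(\phi)=-(2n+1)\cdot\phi$ and $\kappa(\phi,\psi)=-\,\phi\cdot\psi$ hold for all $\phi,\psi\in\E_{uv}$ and the proof closes as for Theorem~\ref{theorem-eigenfamily-SLC}.
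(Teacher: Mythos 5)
Your skeleton is the right one (linearity for $\tau$, then the three identities of Proposition~\ref{proposition-SpC} plus the rank-one factorisation for $\kappa$), and it is already more honest than the paper's own proof, which is a one-line appeal to Proposition~\ref{proposition-SpC}. You have also put your finger on exactly the right spot: the mixed $z$--$w$ block is where the argument fails to close. The gap is that you leave this block unresolved, and the rescue you hope for does not exist. Expanding with $\kappa(z_{j\alpha},w_{k\beta})=-z_{k\alpha}w_{j\beta}$, the two mixed blocks of $\kappa(\phi_{ab},\phi_{cd})$ come out as
$$-\trace(v^ta\,z^t)\,\trace(u^td\,w^t)-\trace(v^tc\,z^t)\,\trace(u^tb\,w^t),$$
whereas $-\phi_{ab}\cdot\phi_{cd}$ requires $-\trace(u^ta\,z^t)\trace(v^td\,w^t)-\trace(u^tc\,z^t)\trace(v^tb\,w^t)$: the row-vectors $u$ and $v$ have been exchanged. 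The difference is
$$\sum_{j,k=1}^n(u_jv_k-u_kv_j)\sum_{\alpha,\beta=1}^n(a_\alpha d_\beta+c_\alpha b_\beta)\,z_{j\alpha}w_{k\beta},$$
and the symplectic relation $z\,w^t=w\,z^t$ cannot kill it: that relation only yields the quadratic identities $\sum_\gamma(z_{j\gamma}w_{k\gamma}-z_{k\gamma}w_{j\gamma})=0$, in which the two column indices coincide, while the obstruction above contains monomials $z_{j\alpha}w_{k\beta}$ with $\alpha\neq\beta$ chosen independently. Concretely, for $n=2$ take $u=e_1$, $v=e_2$, $a=c=e_1$, $b=d=e_2$; the obstruction becomes $4(z_{11}w_{22}-z_{21}w_{12})$, which equals $4S_{22}\neq 0$ at the element $\begin{bmatrix} I_2 & S\\ 0 & I_2\end{bmatrix}\in\SpC 2$ with $S=E_{22}$.

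So your fallback diagnosis is the correct one: the cancellation goes through precisely when $u$ and $v$ are proportional, in which case one may as well take $u=v$ (absorbing the constant into $b$ and $d$); then all four blocks factor by the same symmetry $v_jv_k=v_kv_j$ you used for the $z$--$z$ and $w$--$w$ parts, no appeal to $z\,J_n\,z^t=J_n$ is needed, and $\kappa(\phi,\psi)=-\phi\cdot\psi$ follows. (The computation $\tau(\phi)=-(2n+1)\cdot\phi$ is fine exactly as you state it.) To turn your proposal into a proof you must therefore either restrict the family to $u=v$ or produce the missing identity for the mixed block --- and the example above shows the latter is impossible for $n\ge 2$ and non-proportional $u,v$. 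The same remark applies to the other two-block families $\E_{uv}$ in the paper ($\GLH n$, $\SUs{2n}$, $\SOs{2n}$, $\Spp pq$), whose cross-terms have the identical structure.
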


\begin{proof}The statement is an immediate consequence of Proposition \ref{proposition-SpC}.
\end{proof}


\section{The Semi-Riemannian Lie Group $\SpR n$}
\label{section-SpR}


The semisimple real symplectic group $\SpR n$ is the subgroup of the complex symplectic group $\SpC{n}$ given by 
$$\SpR n=\{x\in\SLR {2n}\,|\, x\, J_{n}\,x^t=J_{n}\}$$
with Lie algebra 
$$\spR n=\{X\in\glr{2n}\,|\, X\, J_{n}+J_{n}\, X^t=0\}.$$
For the Lie algebra $\spC n$ we have the orthogonal decomposition 
$$\spC n=\spR n\oplus i\,\spR n.$$

\begin{theorem}\label{theorem-eigenfamily-SpR}
Let $v$ be a non-zero element of $\cn^n$,
then the complex $n$-dimensional vector space
$$\E_v=\{\phi_{a}:\SpR n\to\cn\ |\ \phi_{a}(x)=\trace (v^tax^t),\ a\in \cn^n\}$$ 
is an eigenfamily on $\SpR n$ such that for all $\phi,\psi\in\E_v$ we have 
$$\tau(\phi)=-\,\frac{2n+1}2\cdot\phi,\ \ \kappa(\phi,\psi)=-\,\frac 12\cdot\phi\cdot\psi.$$
\end{theorem}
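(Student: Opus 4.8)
The plan is to follow the pattern already used for the other non‑compact real forms in the paper, most closely $\GLR n$ and $\SLR n$ (Propositions \ref{proposition-GLR}, \ref{proposition-SLR} and Theorems \ref{theorem-eigenfamily-GLR}, \ref{theorem-eigenfamily-SLR}). The whole argument rests on the orthogonal decomposition $\spC n=\spR n\oplus i\,\spR n$ recorded above, together with Proposition \ref{proposition-SpC}, which gives the tension field and conformality operator on $\SpC n$.

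The first step is to compute $\tau$ and $\kappa$ on the real‑valued matrix elements $x_{j\alpha}$ of the standard representation of $\SpR n$, showing
$$
\tau(x_{j\alpha})=-\,\frac{2n+1}{2}\cdot x_{j\alpha},\qquad
\kappa(x_{j\alpha},x_{k\beta})=-\,\frac12\cdot x_{k\alpha}\cdot x_{j\beta}.
$$
For this, fix a $g$-orthonormal basis $\B$ of $\spR n$. Since $g(iZ,iZ')=-g(Z,Z')$ and $g(Z,iZ')=\Im\trace(ZZ')=0$ for real matrices $Z,Z'$, the set $i\,\B$ is a $g$-orthonormal basis of $i\,\spR n$ with all signs reversed, so $\B\cup i\,\B$ is a $g$-orthonormal basis of $\spC n$. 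The matrix elements $z_{j\alpha}$ on $\SpC n$ are holomorphic, hence $(iZ)(z_{j\alpha})=i\cdot Z(z_{j\alpha})$ and $(iZ)^2(z_{j\alpha})=-Z^2(z_{j\alpha})$ for each $Z\in\B$; combined with $g(iZ,iZ)=-g(Z,Z)$, the $\B$- and the $i\,\B$-contributions to $\tau$ and to $\kappa$ on $\SpC n$ coincide. Thus, restricted to $\SpR n$, the operators on $\SpC n$ are exactly twice those on $\SpR n$ applied to these functions, and the displayed formulas follow from Proposition \ref{proposition-SpC}. As for $\Sp n$ and $\SpC n$, no trace‑correction term appears here, since $I_{2n}\notin\spC n$.

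With these formulas in hand the eigenfamily claim follows exactly as in the proof of Theorem \ref{theorem-eigenfamily-SLC}, and in fact more directly, as $\kappa$ carries no correction term. Write $\phi=\phi_a$ and $\psi=\phi_b$ in $\E_v$ as linear combinations of the $x_{j\alpha}$ with coefficient matrices $A=v^ta$ and $B=v^tb$. Linearity of $\tau$ gives $\tau(\phi)=-\frac{2n+1}{2}\cdot\phi$ at once, while linearity of $\kappa$ and the second formula above yield
$$
\kappa(\phi,\psi)+\tfrac12\cdot\phi\cdot\psi
=\tfrac12\sum_{j,\alpha,k,\beta}\bigl(A_{j\alpha}B_{k\beta}-A_{k\alpha}B_{j\beta}\bigr)\,x_{j\alpha}\,x_{k\beta}.
$$
By construction every column of $A$ and of $B$ is a scalar multiple of the fixed vector $v$, so any two of them are linearly dependent and each minor $A_{j\alpha}B_{k\beta}-A_{k\alpha}B_{j\beta}$ vanishes; hence $\kappa(\phi,\psi)=-\frac12\cdot\phi\cdot\psi$. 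The assertion on the dimension of $\E_v$ follows from the injectivity of $a\mapsto\phi_a$, which uses $v\neq 0$.

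The computations are entirely routine; the one point requiring a little care is the first step — the bookkeeping showing that passage from $\SpC n$ to its split real form $\SpR n$ simply halves both operators on the matrix elements — and this is the same as what was already carried out for $\GLR n$ and $\SLR n$.
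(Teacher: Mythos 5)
Your proposal is correct and follows essentially the same route as the paper: the paper's proof simply invokes Theorem \ref{theorem-eigenfamily-SpC} together with the orthogonal decomposition $\spC n=\spR n\oplus i\,\spR n$, which is exactly the halving argument you spell out. You merely make explicit the two steps the paper leaves implicit (the factor-of-two bookkeeping for $\tau$ and $\kappa$ on the matrix elements, and the vanishing of the $2\times 2$ minors of $A=v^ta$ and $B=v^tb$), both of which are carried out correctly.
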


\begin{proof}
The result follows directly from Theorem \ref{theorem-eigenfamily-SpC} and the fact that $\spC n=\spR n\oplus i\,\spR n$.
\end{proof}


\section{The Semi-Riemannian Lie Group $\SOs {2n}$}
\label{section-SOs}


In this section we construct eigenfamilies of complex-valued 
functions on the semisimple non-compact Lie group
$$\SOs{2n}=\{g\in\SUU nn\ |\ g\cdot I_{nn}\cdot J_{n}\cdot
g^t=I_{nn}\cdot J_{n}\},$$ 
where 
$$\SUU nn =\{z\in\SLC{2n}\ |\ z\cdot I_{n,n}\cdot z^*=I_{n,n}\}.$$
For the Lie algebra 
$$\sos{2n}=\Big\{\begin{bmatrix}Z & W \\
	-\bar{W} & \bar{Z}\end{bmatrix}\in\cn^{2n\times 2n}\Big|\ Z+Z^*=0\
\text{and}\ W+W^t=0\Big\}$$
of $\SOs{2n}$ we have the orthogonal splitting
${\mathfrak{so}_+^*}(2n)\oplus{\mathfrak{so}_-^*}(2n)$,  
where the subspaces ${\mathfrak{so}_+^*}(2n)$ and ${\mathfrak{so}_-^*}(2n)$ have the orthonormal basis $\B^+$ and $\B^-$, respectively, with
\begin{eqnarray*}
\B^+&=&\Big\{\frac 1{\sqrt 2}
\begin{bmatrix}
Y_{rs} & 0 \\
0 & Y_{rs}
\end{bmatrix},
\frac 1{\sqrt 2}
\begin{bmatrix}
iX_{rs} & 0 \\
0 & -iX_{rs}
\end{bmatrix},
\frac 1{\sqrt 2}
\begin{bmatrix}
iD_t & 0 \\
0 & -iD_t
\end{bmatrix}\\
& &\qquad\qquad\qquad\qquad\qquad\qquad
\Big|\ 1\le r<s\le n\ \ \text{and}\ \ 1\le t\le n\Big\}
\end{eqnarray*}
and
$$
\B^-=
\left\{\frac 1{\sqrt 2}
\begin{bmatrix}
0 & Y_{rs} \\
-Y_{rs} & 0
\end{bmatrix},
\frac 1{\sqrt 2}
\begin{bmatrix}
0 & iY_{rs} \\
iY_{rs} & 0
\end{bmatrix}\Big|\ 1\le r<s\le n \right\}.
$$
Here $g(Z,Z)=1$ for all $Z\in\B^+$ and $g(Z,Z)=-1$ for all $Z\in \B^-$.

\begin{proposition}\label{proposition-SOs}
Let $z_{j\alpha}, w_{k\beta}:\SOs{2n}\to\mathbb{C}$ be the complex-valued matrix coefficients of the standard representation of $\SOs{2n}$. Then the tension field $\tau$ and the conformality operator $\kappa$ on $\SOs{2n}$ satisfy the following relations
$$\tau(z_{j\alpha})=-\,\frac{2n-1}{2}\cdot z_{j\alpha},\ \ \tau(w_{j\alpha})=-\,\frac{2n-1}{2}\cdot w_{j\alpha},$$
$$\kappa(z_{j\alpha}, z_{k\beta})= -\,\frac 12\cdot z_{k\alpha}\cdot z_{j\beta}, \ \ \kappa(w_{j\alpha},w_{k\beta})= -\,\frac 12\cdot w_{k\alpha}\cdot w_{j\beta},
$$
$$
{\kappa}(z_{j\alpha},w_{k\beta})
=-\frac{1}{2}\cdot z_{k\alpha}\cdot w_{j\beta}.
$$
\end{proposition}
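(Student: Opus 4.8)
The plan is to compute $\tau$ and $\kappa$ directly from the explicit orthonormal bases $\B^+$ and $\B^-$ for $\sos{2n}$, in exactly the manner of the proof of Proposition \ref{proposition-GLH} for $\GLH n$. Recall that on a matrix subgroup $G\le\GLC{2n}$ one has $\tau(\phi)=\sum_{Z\in\B}g(Z,Z)\,Z^2(\phi)$ and $\kappa(\phi,\psi)=\sum_{Z\in\B}g(Z,Z)\,Z(\phi)\,Z(\psi)$, that $g(Z,Z)=1$ for $Z\in\B^+$ and $g(Z,Z)=-1$ for $Z\in\B^-$, and that a matrix coefficient $g\mapsto g_{ab}$ of the standard representation satisfies $Z(g_{ab})=(g\,Z)_{ab}$ and $Z^2(g_{ab})=(g\,Z^2)_{ab}$ for $Z\in\sos{2n}$. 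Writing the entries of $g\in\SOs{2n}$ as $z_{j\alpha}=g_{j\alpha}$ and $w_{k\beta}=g_{k,n+\beta}$ with $1\le j,k,\alpha,\beta\le n$, everything reduces to two linear-algebra computations: the $2n\times2n$ ``Casimir matrix'' $\mathcal C=\sum_{Z\in\B^+}Z^2-\sum_{Z\in\B^-}Z^2$ governing $\tau$, and, for each choice of indices $\alpha,\beta$, the three matrices $\sum_{Z\in\B^+}Z\,M\,Z^t-\sum_{Z\in\B^-}Z\,M\,Z^t$ governing $\kappa$, where $M$ is the $2n\times2n$ elementary matrix with the single block $E_{\alpha\beta}$ in the top-left, top-right, or bottom-right $n\times n$ position, corresponding respectively to $\kappa(z_{j\alpha},z_{k\beta})$, $\kappa(z_{j\alpha},w_{k\beta})$ and $\kappa(w_{j\alpha},w_{k\beta})$.

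For the tension field I would first record the elementary facts $X_{rs}^2=\tfrac12(E_{rr}+E_{ss})$, $Y_{rs}^2=-\tfrac12(E_{rr}+E_{ss})$ and $D_t^2=E_{tt}$, which sum to $\sum_{r<s}X_{rs}^2=\tfrac{n-1}2 I_n$, $\sum_{r<s}Y_{rs}^2=-\tfrac{n-1}2 I_n$ and $\sum_{t}D_t^2=I_n$. Since every element of $\B^+$ is block-diagonal with the two diagonal $n\times n$ blocks equal up to sign, and every element of $\B^-$ is block-anti-diagonal, squaring and summing collapses the block structure: one finds $\sum_{Z\in\B^+}Z^2=-\tfrac n2\,I_{2n}$ and $\sum_{Z\in\B^-}Z^2=\tfrac{n-1}2\,I_{2n}$, so $\mathcal C=-\tfrac{2n-1}2\,I_{2n}$. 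Hence $\tau(z_{j\alpha})=(g\,\mathcal C)_{j\alpha}=-\tfrac{2n-1}2\,z_{j\alpha}$, and likewise $\tau(w_{j\alpha})=-\tfrac{2n-1}2\,w_{j\alpha}$ because $\mathcal C$ is scalar.

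For the conformality operator the bookkeeping is the same in spirit, but the key sub-computation is now the ``sandwich sums'' such as $\sum_{r<s}\bigl(X_{rs}E_{\alpha\beta}X_{rs}+Y_{rs}E_{\alpha\beta}Y_{rs}\bigr)+\sum_{t}D_tE_{\alpha\beta}D_t$. Using $E_{rs}E_{\alpha\beta}E_{rs}=\delta_{s\alpha}\delta_{r\beta}E_{rs}$, $E_{rs}E_{\alpha\beta}E_{sr}=\delta_{s\alpha}\delta_{s\beta}E_{rr}$ and the analogous relations, a short count over the pairs $r<s$ shows that this particular combination equals $E_{\beta\alpha}$, with the apparent $\delta_{jk}\delta_{\alpha\beta}$-type terms that occur for the complex special orthogonal group (Proposition \ref{proposition-SOC}) cancelling here against the off-diagonal basis $\B^-$. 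Plugging these identities into the block sums --- keeping track of which of the products $Z\,M\,Z^t$ actually land in the same block as $M$, and noting that the two families making up $\B^-$ produce contributions that cancel in pairs --- one obtains, for instance in the $z$-$z$ case, the matrix $-\tfrac12 M'$, where $M'$ has the single block $E_{\beta\alpha}$ in the top-left position. Since $e_j^t\,g\,M'\,g^t\,e_k=z_{j\beta}z_{k\alpha}$, this yields $\kappa(z_{j\alpha},z_{k\beta})=-\tfrac12\,z_{k\alpha}z_{j\beta}$, and the remaining two conformality identities come out the same way.

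The only genuine obstacle is the combinatorial care required in the $\kappa$ step: among the twelve families of products $Z\,M\,Z^t$ (three types in $\B^+$ and two in $\B^-$, for each of the three positions of $M$) one must correctly identify which land outside the relevant block and hence contribute nothing, and which cancel in $\B^+$-$\B^-$ or intra-$\B^-$ pairs, so that what survives collapses to a single rank-one matrix with no residual $\delta$-terms. Once this is pinned down the computation is routine and parallels the proof of Proposition \ref{proposition-GLH} verbatim.
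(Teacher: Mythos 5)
Your strategy coincides with the paper's own proof, which consists of the single remark that one applies the same method as for Proposition \ref{proposition-GLH}: direct summation over the explicit orthonormal bases $\B^+$ and $\B^-$. Your tension-field computation is correct ($\sum_{\B^+}Z^2=-\tfrac n2 I_{2n}$, $\sum_{\B^-}Z^2=\tfrac{n-1}2 I_{2n}$, hence the eigenvalue $-\tfrac{2n-1}2$), and so is your treatment of $\kappa(z_{j\alpha},z_{k\beta})$ and $\kappa(w_{j\alpha},w_{k\beta})$, where the two off-diagonal families of $\B^-$ indeed cancel in pairs and the $\B^+$ sandwich sum collapses to $-\tfrac12 E_{\beta\alpha}$ in the relevant diagonal block.

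However, your description of the mechanism fails in exactly the mixed case $\kappa(z_{j\alpha},w_{k\beta})$, where $M$ sits in the top-right block. There the two families of $\B^-$ do \emph{not} cancel: both contribute $+\tfrac12\,Y_{rs}E_{\alpha\beta}Y_{rs}$ to the bottom-left block, so they reinforce, and the block-diagonal $\B^+$ sum leaves a residual $\tfrac12\,\delta_{\alpha\beta}I_n$ in the top-right block. The matrix $\sum_{Z\in\B^+}ZMZ^t-\sum_{Z\in\B^-}ZMZ^t$ is therefore \emph{not} a single rank-one matrix free of $\delta$-terms; after conjugation by $g$ the surviving terms assemble into $-\tfrac12\,z_{k\alpha}w_{j\beta}+\tfrac12\,\delta_{\alpha\beta}\,(zw^t+wz^t)_{jk}$, and the second summand vanishes not for combinatorial reasons but because $zw^t+wz^t=0$ is the top-left block of the defining relation $g\cdot I_{nn}\cdot J_n\cdot g^t=I_{nn}\cdot J_n$ of $\SOs{2n}$. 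This is the precise analogue of the $\delta_{jk}\delta_{\alpha\beta}$ term in Proposition \ref{proposition-SOC}, where the relation $zz^t=I_n$ makes it survive as a constant; here the quadratic relation kills it. So the stated formulas are correct, but your bookkeeping must invoke the group's defining quadric in the mixed case rather than relying only on cancellations inside $\B^+\cup\B^-$.
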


\begin{proof}
Here we can apply exactly the same strategy as for the proof of Proposition \ref{proposition-GLH}.
\end{proof}

\begin{theorem}\label{theorem-eigenfamily-SOs}
Let $u,v\in\cn^n$ be a non-zero elements of $\cn^n$,
then the complex $2n$-dimensional vector space
$$\E_{uv}=\{\phi_{ab}:\SOs{2n}\to\cn\ |\ \phi_{ab}(g)=\trace (u^taz^t+v^tbw^t),\ a,b\in \cn^n\},$$ 
is an eigenfamily on $\SOs{2n}$ such that for all $\phi,\psi\in\E_{uv}$ we have 
$$\tau(\phi)=-\,\frac{2n-1}{2}\cdot\phi,\ \ \kappa(\phi,\psi)=-\,\frac 12\cdot\phi\cdot\psi.$$
\end{theorem}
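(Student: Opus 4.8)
The plan is to mimic the argument that establishes Theorem \ref{theorem-eigenfamily-SLC} (and is reused for Theorem \ref{theorem-eigenfamily-SUs}), now feeding in the identities from Proposition \ref{proposition-SOs} instead. Fix $a,b\in\cn^n$ and write $\phi=\phi_{ab}$, $\psi=\phi_{cd}$ for elements of $\E_{uv}$. First I would compute $\tau(\phi)$. Since $\phi(g)=\trace(u^taz^t+v^tbw^t)=\sum_{j,\alpha}(u^ta)_{\alpha j}\,z_{j\alpha}+\sum_{j,\alpha}(v^tb)_{\alpha j}\,w_{j\alpha}$ is a linear combination of the matrix coefficients $z_{j\alpha}$ and $w_{j\alpha}$, and since each of these is a $\tau$-eigenfunction with the common eigenvalue $-\tfrac{2n-1}{2}$ by Proposition \ref{proposition-SOs}, linearity of $\tau$ immediately gives $\tau(\phi)=-\tfrac{2n-1}{2}\cdot\phi$, with no further work.

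Next I would handle $\kappa(\phi,\psi)$. Expanding bilinearly, $\kappa(\phi,\psi)$ is a sum of terms of the three types $\kappa(z_{j\alpha},z_{k\beta})$, $\kappa(z_{j\alpha},w_{k\beta})$, $\kappa(w_{j\alpha},w_{k\beta})$, each of which by Proposition \ref{proposition-SOs} equals $-\tfrac12$ times a product in which the roles of the row indices of the two factors have been swapped. Concretely, setting $A=u^ta$, $B=u^tc$, $C=v^tb$, $D=v^td$, the cross terms assemble into expressions of the shape $-\tfrac12\,e_j\,(\text{matrix})\,e_\beta^t$ where the swapped indices produce, after summation, a $2\times2$ determinant built from two columns of $A$ (resp.\ $C$, resp.\ mixed) against two columns of $B$ (resp.\ $D$). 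The key observation — exactly as in the proof of Theorem \ref{theorem-eigenfamily-SLC} — is that because $A=u^ta$ and $B=u^tc$ are rank-one matrices of the form (column vector)$\times$(row vector), any two of their columns are proportional, so every such $2\times2$ determinant $a_{j\alpha}b_{k\beta}-a_{k\alpha}b_{j\beta}$ vanishes. The same rank-one structure applies to the $C,D$ block and to the mixed $z$-$w$ block, since $u$ is a common left factor throughout the $z$-coefficients and $v$ throughout the $w$-coefficients. Hence every obstruction to collapsing $\kappa(\phi,\psi)$ to a multiple of $\phi\cdot\psi$ disappears, and one reads off $\kappa(\phi,\psi)=-\tfrac12\cdot\phi\cdot\psi$.

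I expect the only mildly delicate point to be bookkeeping the index swaps across the three different blocks of $\kappa$-identities and verifying that the rank-one cancellation indeed kills all of them simultaneously; but this is the same phenomenon already exploited in Theorem \ref{theorem-eigenfamily-SLC}, so no new idea is needed. Therefore the proof can simply be: \emph{This follows from Proposition \ref{proposition-SOs} by the same argument as in the proof of Theorem \ref{theorem-eigenfamily-SLC}.}
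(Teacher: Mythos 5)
Your handling of $\tau$ and of the two pure blocks of $\kappa$ is correct: since $\kappa(z_{j\alpha},z_{k\beta})=-\tfrac12\,z_{k\alpha}z_{j\beta}$ swaps the two row indices and both row indices carry the same vector $u$, one gets
$$
-\tfrac12\sum_{j,\alpha,k,\beta}u_ja_\alpha\,u_kc_\beta\,z_{k\alpha}z_{j\beta}
=-\tfrac12\,\trace(u^taz^t)\,\trace(u^tcz^t),
$$
and likewise for the $w$--$w$ block with $v$ in place of $u$.

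However, your assertion that ``the same rank-one structure applies to the mixed $z$--$w$ block'' is exactly where the argument breaks down. By Proposition \ref{proposition-SOs} the mixed contribution to $\kappa(\phi_{ab},\phi_{cd})$ coming from the $z$-part of $\phi_{ab}$ and the $w$-part of $\phi_{cd}$ is
$$
-\tfrac12\sum_{j,\alpha,k,\beta}u_ja_\alpha\,v_kd_\beta\,z_{k\alpha}w_{j\beta}
=-\tfrac12\,\trace(v^taz^t)\,\trace(u^tdw^t),
$$
whereas the eigenfamily identity requires $-\tfrac12\,\trace(u^taz^t)\,\trace(v^tdw^t)$. Here the row-index swap exchanges a factor $u_j$ (attached to the $z$-coefficients of the first function) with a factor $v_k$ (attached to the $w$-coefficients of the second), so the obstruction is the determinant $a_\alpha d_\beta(u_jv_k-u_kv_j)$, formed from a column of $u^ta$ (proportional to $u^t$) against a column of $v^td$ (proportional to $v^t$); it vanishes for all $j,k$ only when $u$ and $v$ are linearly dependent, not merely because each of $u^ta$ and $v^td$ is rank one. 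The quadratic relation $zw^t+wz^t=0$ valid on $\SOs{2n}$ only constrains the contractions $\sum_m z_{jm}w_{km}$ with equal column indices and does not repair this. So your proof as written establishes the claim only when $u$ and $v$ are proportional (for instance $u=v$); for independent $u,v$ and $n\ge 2$ the cross terms do not collapse, and you would need either that extra hypothesis or a genuinely different argument for the mixed block. Note that the paper's own proof consists of the single sentence that the theorem follows immediately from Proposition \ref{proposition-SOs}, so this is precisely the step that any complete proof must confront.
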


\begin{proof}The statement is an immediate consequence of Proposition \ref{proposition-SOs}.
\end{proof}


\section{The Semi-Riemannian Lie Group $\SUU pq$}
\label{section-SUU}


In this section we construct eigenfamilies on the non-compact semisimple Lie group 
$$\SUU pq=\{z\in\SLC{p+q}\ |\ z\cdot I_{p,q}\cdot z^*=I_{p,q}\}.$$
For its Lie algebra
$$\suu pq=\{Z\in\slc{p+q}\ |\ Z\cdot I_{p,q}+ I_{p,q}\cdot Z^*=0\}$$
we have a natural orthogonal splitting 
$$\suu pq=\mathfrak{s}(\u p+\u q)\oplus i\cdot\m$$
such that 
$$\su{p+q}=\mathfrak{s}(\u p+\u q)\oplus\m$$ is the Lie algebra of the special orthogonal group $\SU{p+q}$.

\begin{proposition}\label{proposition-SUU}
Let $z_{j\alpha}:\SUU pq\to\mathbb{C}$ be the complex-valued matrix elements of the standard representation of the special unitary group $\SUU pq$. Then the tension field $\tau$ and the conformality operator $\kappa$ on $\SUU pq$ satisfy the following relations
\begin{eqnarray*}
\tau(z_{j\alpha})&=&-\,\frac{(p+q)^2-1}{(p+q)}\cdot z_{j\alpha},\\
\kappa(z_{j\alpha},z_{k\beta})&=&-\,(z_{k\alpha}\cdot  z_{j\beta}-\frac{1}{(p+q)}\cdot z_{j\alpha}\cdot z_{k\beta}). 
	\end{eqnarray*}
\end{proposition}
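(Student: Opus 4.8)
The plan is to reduce the computation to the known one on the compact dual $\SU{p+q}$, treated in Lemma~\ref{lemma-SU} with $n$ replaced by $p+q$, by means of the two parallel orthogonal decompositions
$$\su{p+q}=\k\oplus\m,\qquad \suu pq=\k\oplus i\cdot\m,\qquad \k:=\mathfrak{s}(\u p+\u q),$$
which share the same compact part $\k$. The point will be that the non-compact directions $i\cdot\m$, carrying the negative part of the semi-Riemannian metric, feed into $\tau$ and $\kappa$ in precisely the same way as the directions $\m$ do on the Riemannian group $\SU{p+q}$.

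First I would recall, exactly as in the discussion of subgroups of $\GLC n$ in \S\ref{section-GLC}, that for $Z\in\glc{p+q}$ a left-invariant vector field and $z_{j\alpha}$ a matrix coefficient one has $Z(z_{j\alpha})(p)=(p\,Z)_{j\alpha}$ and $Z^{2}(z_{j\alpha})(p)=(p\,Z^{2})_{j\alpha}$. Hence, for any subgroup $G\le\GLC{p+q}$ with Lie algebra $\g$ and orthonormal basis $\B_\g$,
$$\tau(z_{j\alpha})(p)=\bigl(p\cdot C_\g\bigr)_{j\alpha},\qquad \kappa(z_{j\alpha},z_{k\beta})(p)=\sum_{\gamma,\delta}p_{j\gamma}\,p_{k\delta}\,(T_\g)_{\gamma\alpha\delta\beta},$$
where $C_\g=\sum_{Z\in\B_\g}g(Z,Z)\,Z^{2}$ and $(T_\g)_{\gamma\alpha\delta\beta}=\sum_{Z\in\B_\g}g(Z,Z)\,Z_{\gamma\alpha}Z_{\delta\beta}$ depend only on the metrised Lie algebra $(\g,g)$.

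The key step is then to check that $C_{\suu pq}=C_{\su{p+q}}$ and $T_{\suu pq}=T_{\su{p+q}}$. Choosing orthonormal bases $\B_\k,\B_\m$ of $\k,\m$ for the form $g(Z,W)=-\Re\trace(ZW)$, and using that $\k$ consists of skew-Hermitian and $i\cdot\m$ of Hermitian matrices, $\B_\k\cup i\,\B_\m$ is an orthonormal basis of $\suu pq$ with $g(Z,Z)=1$ on $\B_\k$ and, for $W\in\B_\m$, $g(iW,iW)=-\Re\trace\,(iW)^{2}=\Re\trace\,W^{2}=-1$. On $\k$ the two bases and their metric signs agree, while for $W\in\B_\m$
$$g(iW,iW)\,(iW)^{2}=(-1)(-W^{2})=W^{2},\qquad g(iW,iW)\,(iW)_{\gamma\alpha}(iW)_{\delta\beta}=(-1)(i^{2})\,W_{\gamma\alpha}W_{\delta\beta}=W_{\gamma\alpha}W_{\delta\beta},$$
which is exactly the contribution of $W\in\B_{\su{p+q}}$ carrying its positive metric sign: the metric sign $-1$ and the factor $i^{2}$ cancel. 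Therefore $\tau$ and $\kappa$ act on the matrix coefficients of $\SUU pq$ by the same formulas as on those of $\SU{p+q}$; comparing the two displayed formulas with Lemma~\ref{lemma-SU} gives $C_{\su{p+q}}=-\tfrac{(p+q)^{2}-1}{p+q}\,I_{p+q}$ and $(T_{\su{p+q}})_{\gamma\alpha\delta\beta}=-\delta_{\gamma\beta}\delta_{\delta\alpha}+\tfrac1{p+q}\delta_{\gamma\alpha}\delta_{\delta\beta}$, and substituting back yields the two asserted identities.

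I expect the one delicate point to be bookkeeping rather than computation: Lemma~\ref{lemma-SU} is a statement about the compact group $\SU{p+q}$, whereas we evaluate on the different real form $\SUU pq$, so one must make sure the unitarity relation $z\,z^{*}=I_{p+q}$ is never invoked. This is harmless, since $C_\g$ and $T_\g$ are attached to the metrised Lie algebra $\su{p+q}$ alone, and the displayed expressions then exhibit $\tau$ and $\kappa$ on the $z_{j\alpha}$ as genuine polynomial identities, valid on every real form concerned --- in particular on $\SUU pq$.
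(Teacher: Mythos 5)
Your proposal is correct and follows essentially the same route as the paper, whose proof is a one-line appeal to Lemma~\ref{lemma-SU}, the decompositions $\su{p+q}=\mathfrak{s}(\u p+\u q)\oplus\m$, $\suu pq=\mathfrak{s}(\u p+\u q)\oplus i\cdot\m$, and the definition of the semi-Riemannian metric; you have simply written out the sign cancellation $g(iW,iW)\,(iW)\otimes(iW)=W\otimes W$ that makes the reduction work. The extra care you take about transporting the formulas from $\SU{p+q}$ to the non-compact real form (via the polynomial/Lie-algebraic nature of $C_\g$ and $T_\g$) is a legitimate and correctly handled point that the paper leaves implicit.
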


\begin{proof}
This is an immediate consequence of Lemma \ref{lemma-SU}, the above relationship between the Lie algebras $\su{p+q}$ and $\suu pq$ and how the semi-Riemannian metric $g$ is defined on the complex Lie algebra $\glc {p+q}$.
\end{proof}

\begin{theorem}\label{theorem-eigenfamily-SUUpq}
Let $v$ be a non-zero element of $\mathbb{C}^{p+q}$. Then the complex  $(p+q)$-dimensional vector space
$$
\E_v=\{\phi_a:\SUU pq\to\mathbb{C}\,| \,\phi_a(z)=\trace(v^taz^t),\  a\in\mathbb{C}^{p+q}\}
$$
is an eigenfamily on $\SUU pq$ such that for all $\phi,\psi\in\E_v$ we have 
$$\tau(\phi)=-\,\frac{(p+q)^2-1}{(p+q)}\cdot\phi,\ \ \kappa(\phi,\psi)=-\frac{(p+q-1)}{(p+q)}\cdot\phi\cdot \psi.$$
\end{theorem}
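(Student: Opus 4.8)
The plan is to follow the proof of Theorem \ref{theorem-eigenfamily-SLC} verbatim, now feeding in Proposition \ref{proposition-SUU} in place of Proposition \ref{proposition-SLC}. First I would fix $a,b\in\cn^{p+q}$, put $\phi=\phi_a$ and $\psi=\phi_b$ in $\E_v$, and expand them in the matrix coefficients as $\phi_a(z)=\trace(v^t a\,z^t)=\sum_{j,\alpha=1}^{p+q} v_j\, a_\alpha\, z_{j\alpha}$, and likewise for $\psi$. Since $\tau$ is complex linear, the first identity of Proposition \ref{proposition-SUU} immediately yields $\tau(\phi)=-\,\frac{(p+q)^2-1}{p+q}\cdot\phi$, which is the claimed eigenvalue.

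The substantive point is the $\kappa$-identity. Using the complex bilinearity of $\kappa$ together with the second identity of Proposition \ref{proposition-SUU}, I would obtain
\begin{eqnarray*}
\kappa(\phi,\psi)&=&-\sum_{j,\alpha,k,\beta=1}^{p+q} v_j\, a_\alpha\, v_k\, b_\beta\,\Big(z_{k\alpha}\,z_{j\beta}-\frac{1}{p+q}\,z_{j\alpha}\,z_{k\beta}\Big).
\end{eqnarray*}
The second summand collapses to $\frac{1}{p+q}\,\phi\,\psi$ by definition of $\phi,\psi$. For the first summand, the coefficient matrix $A=v^t a$ has rank one, so any two of its columns are linearly dependent; equivalently, every $2\times 2$ minor $v_j a_\alpha\, v_k b_\beta - v_k a_\alpha\, v_j b_\beta$ vanishes, exactly as in the proof of Theorem \ref{theorem-eigenfamily-SLC}. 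This is precisely what allows $z_{k\alpha}z_{j\beta}$ to be replaced by $z_{j\alpha}z_{k\beta}$ inside the sum at no cost, so the first summand also equals $\phi\,\psi$. Combining the two pieces gives $\kappa(\phi,\psi)=-\big(1-\frac{1}{p+q}\big)\,\phi\,\psi=-\,\frac{p+q-1}{p+q}\cdot\phi\cdot\psi$, as required.

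I do not anticipate a genuine obstacle. The only care needed is the index bookkeeping in the quadruple sum — tracking which of $v$, $a$, $b$ attaches to which index of $z_{k\alpha}$ and $z_{j\beta}$ — and noting that the rank-one hypothesis $A=v^t a$ is exactly what forces the relevant determinants to vanish. As in Theorem \ref{theorem-eigenfamily-SLC}, the indefiniteness of the metric $g$ on $\SUU pq$ plays no role here, since the signature signs have already been absorbed into the formulas of Proposition \ref{proposition-SUU}.
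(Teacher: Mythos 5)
Your proposal is correct and is essentially the paper's own argument: the authors deduce the theorem directly from Proposition \ref{proposition-SUU}, implicitly reusing the quadruple-sum computation carried out before Theorem \ref{theorem-eigenfamily-SLC}, where the rank-one structure of $A=v^ta$, $B=v^tb$ makes the minors $a_{j\alpha}b_{k\beta}-a_{k\alpha}b_{j\beta}$ vanish. Your index bookkeeping and the resulting eigenvalues $-\frac{(p+q)^2-1}{p+q}$ and $-\frac{p+q-1}{p+q}$ check out.
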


\begin{proof}
The statement follows directly from Proposition \ref{proposition-SUU}.
\end{proof}


\section{The Semi-Riemannian Lie Group $\SOO pq$}
\label{section-SOO}


In this section we construct eigenfamilies on the non-compact semisimple Lie group 
$$\SOO pq=\{x\in\SLR{p+q}\ |\ x\cdot I_{p,q}\cdot x^t=I_{p,q}\}.$$
For its Lie algebra
$$\soo pq=\{X\in\slr{p+q}\ |\ X\cdot I_{p,q}+ I_{p,q}\cdot X^t=0\}$$
we have a natural orthogonal spilling
$$\soo pq\cong (\so p\oplus\so q)\oplus i\cdot\m$$
such that 
$$\so{p+q}=(\so p\oplus\so q)\oplus\m$$ is the Lie algebra of the special orthogonal group $\SO{p+q}$.

\begin{proposition}\label{proposition-SOO}
Let $x_{j\alpha}:\SOO pq\to\mathbb{R}$ be the real-valued matrix elements of the standard representation of the special orthogonal group $\SOO pq$. Then the tension field $\tau$ and the conformality operator $\kappa$ on $\SOO pq$ satisfy the following relations
\begin{eqnarray*}
\tau(x_{j\alpha})&=&-\,\frac{(p+q-1)}2\cdot x_{j\alpha},\\
\kappa(x_{j\alpha},x_{k\beta})
&=&-\,\frac 12\cdot ( x_{j\beta}\cdot  x_{k\alpha}
-\delta_{jk}\cdot \delta_{\alpha\beta}). 
\end{eqnarray*}
\end{proposition}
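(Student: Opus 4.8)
The plan is to adapt the computation behind Lemma~\ref{lemma-SO} to the indefinite setting, in the same spirit as the proof of Proposition~\ref{proposition-GLH}. The first step is to fix an explicit orthonormal basis of $\soo pq$. Writing an element of $\soo pq$ in $(p+q)\times(p+q)$ block form, the relation $X\,I_{p,q}+I_{p,q}\,X^t=0$ forces $X=\begin{bmatrix}A & B\\ B^t & D\end{bmatrix}$ with $A\in\so p$, $D\in\so q$ and $B$ an arbitrary $p\times q$ block; the block-diagonal summand $\so p\oplus\so q$ is spacelike and the symmetric off-diagonal part is timelike. So one takes $\B=\B^+\cup\B^-$ with
$$\B^+=\{Y_{rs}\mid 1\le r<s\le p\}\cup\{Y_{rs}\mid p<r<s\le p+q\},\qquad \B^-=\{X_{rs}\mid 1\le r\le p<s\le p+q\},$$
so that $g(Z,Z)=1$ on $\B^+$ and $g(Z,Z)=-1$ on $\B^-$. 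The key structural remark is that the index pairs $(r,s)$ occurring in $\B^+\cup\B^-$ run through \emph{all} pairs with $1\le r<s\le p+q$, the very index set underlying the standard basis of $\so{p+q}$ used in Lemma~\ref{lemma-SO}.

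Next, as in Section~\ref{section-GLC}, for a left-invariant field $Z$ and a matrix coefficient $x_{j\alpha}$ one has $Z(x_{j\alpha})(x)=(xZ)_{j\alpha}$ and $Z^2(x_{j\alpha})(x)=(xZ^2)_{j\alpha}$, whence
$$\tau(x_{j\alpha})(x)=\Bigl(x\Bigl(\sum_{Z\in\B^+}Z^2-\sum_{Z\in\B^-}Z^2\Bigr)\Bigr)_{j\alpha},\qquad \kappa(x_{j\alpha},x_{k\beta})(x)=\bigl(x\,M_{\alpha\beta}\,x^t\bigr)_{jk},$$
with $M_{\alpha\beta}=\sum_{Z\in\B^+}Z\,E_{\alpha\beta}\,Z^t-\sum_{Z\in\B^-}Z\,E_{\alpha\beta}\,Z^t$. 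The elementary identities I would feed in are $X_{rs}^2=\tfrac12(E_{rr}+E_{ss})=-\,Y_{rs}^2$ and, more generally,
$$X_{rs}\,E_{\alpha\beta}\,X_{rs}=Y_{rs}\,E_{\alpha\beta}\,Y_{rs}+\delta_{\alpha s}\delta_{\beta s}E_{rr}+\delta_{\alpha r}\delta_{\beta r}E_{ss},$$
together with $Y_{rs}^t=-Y_{rs}$ and $X_{rs}^t=X_{rs}$. For the tension field this already settles matters: the sign $-1$ attached to $\B^-$ combined with $X_{rs}^2=-Y_{rs}^2$ turns $-\sum_{Z\in\B^-}Z^2$ into $+\sum_{1\le r\le p<s\le p+q}Y_{rs}^2$, so the inner bracket collapses to $\sum_{1\le r<s\le p+q}Y_{rs}^2$, which one computes to be $-\tfrac{p+q-1}{2}\,I_{p+q}$ (the matrix identity underlying Lemma~\ref{lemma-SO}); hence $\tau(x_{j\alpha})=-\tfrac{p+q-1}{2}\,x_{j\alpha}$.

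For $\kappa$ the same reshuffling recasts the $Y$-contributions to $M_{\alpha\beta}$ as $-\sum_{1\le r<s\le p+q}Y_{rs}\,E_{\alpha\beta}\,Y_{rs}$, the identical matrix met in the compact case, while the correction terms $\delta_{\alpha s}\delta_{\beta s}E_{rr}+\delta_{\alpha r}\delta_{\beta r}E_{ss}$, summed over $1\le r\le p<s\le p+q$, contribute a multiple of $\delta_{\alpha\beta}$ times a diagonal projection onto either the first $p$ or the last $q$ coordinates. The final step is then to invoke the defining relation $x\,I_{p,q}\,x^t=I_{p,q}$ of $\SOO pq$ to evaluate the images of these projections under $x\,(\,\cdot\,)\,x^t$ and to recombine everything into the stated form. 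This recombination is the only genuinely delicate point, and it is where I would concentrate the care: the timelike directions do not sit inside $\soo pq$ in the naive way suggested by the abstract identification $\soo pq\cong(\so p\oplus\so q)\oplus i\cdot\m$ --- the relevant non-compact block is the real symmetric one, not the imaginary skew-symmetric one --- so one must keep precise track of how the indefinite form $I_{p,q}$, rather than the Euclidean one, enters the resulting $\delta_{jk}$-term (and, correspondingly, of the isotropy condition this will impose on the vector $v$ in the eigenfamily built from these coefficients, by analogy with the $\SOC n$ case). Everything outside this bookkeeping is exactly the sign manipulation already performed in the proof of Proposition~\ref{proposition-GLH}.
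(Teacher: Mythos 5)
Your direct computation is a genuinely different route from the paper's proof, which disposes of the statement in one line by invoking Lemma~\ref{lemma-SO}, the identification $\soo pq\cong(\so p\oplus\so q)\oplus i\cdot\m$ and the definition of the metric. Your orthonormal basis $\B^+\cup\B^-$ of the real Lie algebra $\soo pq$ is correct, as are the identities $X_{rs}^2=-Y_{rs}^2$ and $X_{rs}E_{\alpha\beta}X_{rs}=Y_{rs}E_{\alpha\beta}Y_{rs}+\delta_{\alpha s}\delta_{\beta s}E_{rr}+\delta_{\alpha r}\delta_{\beta r}E_{ss}$, and the $\tau$-computation is complete. The gap is that you defer exactly the step on which everything hinges, the ``recombination'' for $\kappa$, and when that step is carried out it does \emph{not} return the stated formula. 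Summing your correction terms over $1\le r\le p<s\le p+q$ gives $\delta_{\alpha\beta}\,\mathrm{diag}(I_p,0)$ if $\alpha>p$ and $\delta_{\alpha\beta}\,\mathrm{diag}(0,I_q)$ if $\alpha\le p$; combining this with the compact-case value $\sum_{r<s}Y_{rs}E_{\alpha\beta}Y_{rs}^t=-\tfrac 12\,(E_{\beta\alpha}-\delta_{\alpha\beta}I_{p+q})$ yields
$$M_{\alpha\beta}=-\tfrac 12\,\bigl(E_{\beta\alpha}-\epsilon_\alpha\,\delta_{\alpha\beta}\,I_{p,q}\bigr),\qquad \epsilon_\alpha=(I_{p,q})_{\alpha\alpha},$$
and hence, using $x\,I_{p,q}\,x^t=I_{p,q}$,
$$\kappa(x_{j\alpha},x_{k\beta})=\bigl(x\,M_{\alpha\beta}\,x^t\bigr)_{jk}=-\tfrac 12\,\bigl(x_{j\beta}\cdot x_{k\alpha}-(I_{p,q})_{jk}\,(I_{p,q})_{\alpha\beta}\bigr).$$
This agrees with the asserted $-\tfrac 12(x_{j\beta}x_{k\alpha}-\delta_{jk}\delta_{\alpha\beta})$ only when $\epsilon_j\epsilon_\alpha=1$.

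The discrepancy is already visible on $\SOO 11$: for $x=\begin{bmatrix}\cosh t&\sinh t\\ \sinh t&\cosh t\end{bmatrix}$ the single timelike basis vector $X_{12}$ gives $\kappa(x_{12},x_{12})=-\tfrac 12 x_{11}^2=-\tfrac 12(x_{12}^2+1)$, whereas the stated formula would give $-\tfrac 12(x_{12}^2-1)$. The reason the paper's route produces $\delta_{jk}\delta_{\alpha\beta}$ is that it implicitly computes on the conjugate realization $(\so p\oplus\so q)\oplus i\cdot\m\subset\soC{p+q}$, whose elements satisfy $z\,z^t=I_{p+q}$; that copy is only conjugate to the standard real $\SOO pq$ (by $\mathrm{diag}(i\,I_p,I_q)$), and the conjugation rescales individual matrix elements by $\pm 1,\pm i$, so the two versions of the $\kappa$-identity are genuinely different statements about different functions. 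Your closing remark, that the non-compact directions are the real symmetric ones and that $I_{p,q}$ must enter the $\delta_{jk}$-term, is precisely the point: to complete your proof you must finish that bookkeeping, and the outcome is the corrected identity above, with the corresponding change of isotropy condition in Theorem~\ref{theorem-eigenfamily-SOO} to $\sum_\alpha\epsilon_\alpha v_\alpha^2=0$. As it stands your argument establishes the $\tau$-relation but leaves the $\kappa$-relation unproved, and the missing step cannot be closed in the form claimed.
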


\begin{proof}
This is an immediate consequence of Lemma \ref{lemma-SO}, the above relationship between the Lie algebras $\so{p+q}$ and $\soo pq$ and how the semi-Riemannian metric $g$ is defined on the complex Lie algebra $\glr {p+q}$.
\end{proof}

\begin{theorem}\label{theorem-eigenfamily-SOO}
Let $v\in\cn^{p+q}$ be a non-zero isotropic element i.e. $(v,v)=0$,
then the complex $(p+q)$-dimensional vector space
$$\E_v=\{\phi_a:\SOO pq\to\cn\ |\ \phi_a(z)=\trace (v^taz^t),\ a\in \cn^{p+q}\}$$ 
is an eigenfamily on $\SOO pq$ such that for all $\phi,\psi\in\E_v$ we have 
$$\tau(\phi)=-\,\frac{(p+q-1)}2\cdot\phi,\ \ 
	\kappa(\phi,\psi)=-\,\frac 12\cdot\phi\cdot\psi.$$
\end{theorem}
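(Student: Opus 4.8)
The plan is to run the same computation that proves the complex orthogonal case of Theorem~\ref{theorem-eigenfamily-SOC}, now feeding in the explicit formulae of Proposition~\ref{proposition-SOO}. First I would write a generic member of $\E_v$ in coordinates: with $v=(v_1,\dots,v_{p+q})$ and $a=(a_1,\dots,a_{p+q})$ one has
$$\phi_a(x)=\trace(v^tax^t)=\sum_{j,\alpha=1}^{p+q}v_j\,a_\alpha\,x_{j\alpha},$$
so every element of $\E_v$ is a linear combination of the matrix coefficients $x_{j\alpha}$ whose coefficient matrix $v^ta$ has rank one. Since $v\neq 0$, the linear map $a\mapsto\phi_a$ is injective (the $x_{j\alpha}$ being linearly independent functions on $\SOO pq$), so that $\dim_{\cn}\E_v=p+q$, as claimed.

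Next, $\cn$-linearity of the tension field together with the first identity of Proposition~\ref{proposition-SOO} gives immediately
$$\tau(\phi_a)=\sum_{j,\alpha}v_j\,a_\alpha\,\tau(x_{j\alpha})=-\,\frac{(p+q-1)}{2}\cdot\phi_a.$$
For the conformality operator I would expand $\kappa(\phi_a,\phi_b)$ by bilinearity and substitute $\kappa(x_{j\alpha},x_{k\beta})=-\tfrac12(x_{j\beta}x_{k\alpha}-\delta_{jk}\delta_{\alpha\beta})$. The contribution of the term $x_{j\beta}x_{k\alpha}$ factorises, after relabelling the summation indices, as
$$-\,\frac12\Big(\sum_{j,\beta}v_j b_\beta x_{j\beta}\Big)\Big(\sum_{k,\alpha}v_k a_\alpha x_{k\alpha}\Big)=-\,\frac12\,\phi_a\,\phi_b,$$
while the contribution of $\delta_{jk}\delta_{\alpha\beta}$ collapses to $\tfrac12\sum_{j}v_j^2\sum_\alpha a_\alpha b_\alpha=\tfrac12\,(v,v)\,(a,b)$. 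The isotropy hypothesis $(v,v)=0$ is exactly what annihilates this last term, leaving $\kappa(\phi_a,\phi_b)=-\tfrac12\,\phi_a\,\phi_b$. Hence $\E_v$ is an eigenfamily with $\lambda=-\tfrac{(p+q-1)}2$ and $\mu=-\tfrac12$.

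There is essentially no serious obstacle here once Proposition~\ref{proposition-SOO} is available: the argument is the same linearity/bilinearity reduction already used in the Riemannian and complex settings, and the only point worth flagging is the role of isotropy — it is precisely the condition that makes the Kronecker-delta term in $\kappa$ drop out, so that $\E_v$ becomes an eigenfamily rather than merely a space of $\tau$-eigenfunctions.
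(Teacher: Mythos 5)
Your computation is exactly the intended argument: the paper's proof of Theorem~\ref{theorem-eigenfamily-SOO} consists of the single sentence that the statement follows directly from Proposition~\ref{proposition-SOO}, and what you have written is precisely the linearity/bilinearity expansion that sentence leaves implicit, including the correct identification of the isotropy condition $(v,v)=0$ as what annihilates the Kronecker-delta term. No gaps.
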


\begin{proof}
The statement follows directly from Proposition \ref{proposition-SOO}.
\end{proof}

\renewcommand{\arraystretch}{2}
\begin{table}[h]
	\makebox[\textwidth][c]{
		\begin{tabular}{ccccc}
			\midrule
			\midrule
			Lie group 	
			& Eigenfunctions $\phi$  & $\lambda$ & $\mu$ & Conditions \\
			\midrule
			\midrule
			$\SOC n$	
			& $\trace(v^taz^t)$	& $-\,(n-1)$ & $-1$& $a\in\cn^n,\ (v,v)=0$ \\
			\midrule
			$\SpC n$    
			& $\trace (u^taz^t+v^tbw^t)$ & $-\,(2n+1)$ & $-1$ & $a,b\in\cn^n$ \\
			\midrule
			$\SpR n$	
			& $\trace(v^tax^t)$ & $-\,\frac{2n+1}2$& $-\frac 12$ & $a\in\cn^n$ \\
			\midrule
			$\SOs{2n}$	
			& $\trace (u^taz^t+v^tbw^t)$ & $-\frac{2n-1}{2}$ & $-\,\frac 12$ & $a,b\in\cn^n$ \\
			\midrule
			$\SUU pq$	
			& $\trace(v^taz^t)$ & $-\frac{(p+q)^2-1}{(p+q)}$ & $-\frac{(p+q-1)}{(p+q)}$ & $a\in\cn^n$ \\
			\midrule
			$\SOO pq$	
			& $\trace(v^tax^t)$ & $-\frac{(p+q-1)}2$ & $-\frac 12$ & $a\in\cn^n$ \\
			\midrule
			$\Spp pq$ 
			& $\trace (u^taz^t+v^tbw^t)$ & $-\frac{2(p+q)+1}2$ & $-\frac 12$ & $a,b\in\cn^n$ \\
			\midrule\midrule
		\end{tabular}
	}
	\bigskip
	\caption{Eigenfunctions on classical non-compact Lie groups.}
	\label{table-eigenfunctions-2}	
\end{table}
\renewcommand{\arraystretch}{1}


\section{The Semi-Riemannian Lie Group $\Spp pq$}
\label{section-Spp}


In this section we construct eigenfamilies on the non-compact semisimple Lie group 
$$\Spp pq=\{g\in\SLH{p+q}\ |\ g\cdot I_{p,q}\cdot g^*=I_{p,q}\}.$$
For its Lie algebra
$$\spp pq=\{Z\in\slh{p+q}\ |\ Z\cdot I_{p,q}+ I_{p,q}\cdot Z^*=0\}$$
we have a natural orthogonal decomposition
$$\spp pq=(\sp p\oplus\sp q)\oplus i\cdot\m$$
such that 
$$\sp{p+q}=(\sp p\oplus\sp q)\oplus\m$$ is the Lie algebra of the quaternionic unitary group $\Sp{p+q}$.

\begin{proposition}\label{proposition-Spp}
Let $z_{j\alpha},w_{j\alpha}:\Spp pq\to\mathbb{C}$ be the complex-valued matrix elements of the standard representation of the quaternionic unitary group $\Spp pq$. Then the tension field $\tau$ and the conformality operator $\kappa$ on $\Spp pq$ satisfy the following relations 
	$$
	\tau(z_{j\alpha})= -\,\frac{2(p+q)+1}2\cdot z_{j\alpha},
	\ \ \tau(w_{k \beta})=-\,\frac{2(p+q)+1}2\cdot w_{k \beta},
	$$
	$$
	\kappa(z_{j\alpha},z_{k\beta})
	=-\,\frac 12\cdot z_{k\alpha}\cdot z_{j\beta},\ \
	\kappa(w_{j\alpha},w_{k\beta})
	=-\,\frac 12\cdot w_{k\alpha}\cdot w_{j\beta},
	$$
	$$
	\kappa(z_{j\alpha},w_{k\beta})
	=-\,\frac 12\cdot z_{k\alpha}\cdot w_{j\beta}.
	$$
\end{proposition}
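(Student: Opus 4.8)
The plan is to evaluate $\tau$ and $\kappa$ on the matrix coefficients straight from an orthonormal basis of $\spp pq$, and then to read off the answer from the corresponding formulas on the compact group $\Sp{p+q}$ supplied by Lemma~\ref{lemma-Sp} with $n$ replaced by $p+q$. Since $\Spp pq$ is a subgroup of $\GLC{2(p+q)}$ equipped with the induced metric, the Koszul computation carried out at the beginning of Section~\ref{section-GLC} gives $\nab ZZ=0$ for every $Z\in\spp pq$, so that for any orthonormal basis $\B$ of $\spp pq$ one has
$$\tau(\phi)=\sum_{Z\in\B}g(Z,Z)\cdot Z^2(\phi),\qquad \kappa(\phi,\psi)=\sum_{Z\in\B}g(Z,Z)\cdot Z(\phi)\cdot Z(\psi).$$

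The key step is to run the two decompositions $\spp pq=(\sp p\oplus\sp q)\oplus i\cdot\m$ and $\sp{p+q}=(\sp p\oplus\sp q)\oplus\m$ in parallel. Pick an orthonormal basis $\B_{\k}$ of $\sp p\oplus\sp q$ and an orthonormal basis $\B_{\m}$ of $\m$ for the standard Riemannian metric of $\Sp{p+q}$; then $\B_{\k}\cup\B_{\m}$ is an orthonormal basis of $\sp{p+q}$ with all squares equal to $+1$, whereas $\B_{\k}\cup(i\cdot\B_{\m})$ is an orthonormal basis of $\spp pq$ with $g(Z,Z)=+1$ on $\B_{\k}$ and $g(Z,Z)=-1$ on $i\cdot\B_{\m}$. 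For a matrix coefficient $z_{j\alpha}$ one has $Z(z_{j\alpha})(g)=(g\,Z)_{j\alpha}$ and $Z^2(z_{j\alpha})(g)=(g\,Z^2)_{j\alpha}$; since $(iZ)^2=-Z^2$ and $(iZ)(z_{j\alpha})=i\cdot Z(z_{j\alpha})$, the minus sign carried by the directions $i\cdot\B_{\m}$ is precisely cancelled, both in the $\tau$-sum and in the $\kappa$-sum. Hence on the matrix coefficients $\tau$ on $\Spp pq$ coincides with $\tau$ on $\Sp{p+q}$, and $\kappa$ on $\Spp pq$ coincides with $\kappa$ on $\Sp{p+q}$; the identical argument disposes of the coefficients $w_{j\alpha}$ and of the mixed pairing $\kappa(z_{j\alpha},w_{k\beta})$. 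Plugging Lemma~\ref{lemma-Sp} with $n=p+q$ into these identities yields exactly the eigenvalue $-\frac{2(p+q)+1}2$ and the conformality relations in the statement.

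The one point that really requires care --- and the step I expect to be the main obstacle --- is checking that the noncompact part $\m$ of $\sp{p+q}$, after multiplication by $i$, indeed furnishes exactly the negative-definite directions of $g=-\Re\trace(Z\cdot W)$ on $\spp pq$, i.e. that the orthogonal splitting above is the one compatible with this metric; this is where the block structure dictated by $I_{p,q}$ has to be tracked (a skew-Hermitian $Z$ has $\Re\trace(Z^2)\le 0$, so that $iZ$ sits in the negative cone). This bookkeeping is, however, entirely parallel to the one performed for $\SUU pq$ and $\SOO pq$ in Propositions~\ref{proposition-SUU} and~\ref{proposition-SOO}, so in the final write-up the proof can be compressed to the same one-line remark: the statement is an immediate consequence of Lemma~\ref{lemma-Sp}, the relationship between the Lie algebras $\sp{p+q}$ and $\spp pq$, and the way the semi-Riemannian metric is defined on the complex Lie algebra $\glc{2(p+q)}$.
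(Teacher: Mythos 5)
Your proposal is correct and follows essentially the same route as the paper: the paper's proof is a one-line appeal to Lemma~\ref{lemma-Sp} (with $n=p+q$), the splitting $\spp pq=(\sp p\oplus\sp q)\oplus i\cdot\m$ versus $\sp{p+q}=(\sp p\oplus\sp q)\oplus\m$, and the definition of the semi-Riemannian metric, which is exactly the sign-cancellation mechanism $(iZ)^2=-Z^2$ against $g(iZ,iZ)=-1$ that you spell out. Your more detailed justification of why $i\cdot\m$ carries the negative-definite directions is a useful expansion but not a different argument.
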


\begin{proof}
This is an immediate consequence of Lemma \ref{lemma-Sp}, the above relationship between the Lie algebras $\sp{p+q}$ and $\spp pq$ and how the semi-Riemannian metric $g$ is defined on the complex Lie algebra $\glh {p+q}$.
\end{proof}

\begin{theorem}\label{theorem-eigenfamily-Spp}
Let $u,v\in\cn^{p+q}$ be a non-zero element of $\cn^{p+q}$, then the complex $(p+q)$-dimensional vector space 
$$\E_{uv}=\{\phi_{ab}:\Spp pq\to\cn\ |\ 
\phi_{ab}(g)=\trace (u^taz^t+v^tbw^t),\ a,b\in \cn^{p+q}\}$$ is an eigenfamily on $\Spp pq$ such that for all $\phi,\psi\in\E_{uv}$ we have 
$$\tau(\phi)=-\,\frac{2(p+q)+1}2\cdot\phi,\ \ \kappa(\phi,\psi)=-\,\frac 12\cdot\phi\cdot\psi.$$
\end{theorem}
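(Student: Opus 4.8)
The plan is to obtain the statement as a direct consequence of Proposition \ref{proposition-Spp}, in the same manner that Theorem \ref{theorem-eigenfamily-SpC} follows from Proposition \ref{proposition-SpC}. Write a general member of $\E_{uv}$ in the form
$$\phi_{ab}(g)=\sum_{j,\alpha}u_j a_\alpha\, z_{j\alpha}+\sum_{j,\alpha}v_j b_\alpha\, w_{j\alpha},$$
with $u,v$ the fixed vectors and $a,b\in\cn^{p+q}$ the varying parameters, and let $\psi=\phi_{cd}$ be a second member. First I would settle the tension-field relation: since $\tau$ is complex linear and, by Proposition \ref{proposition-Spp}, every coordinate $z_{j\alpha}$ and $w_{j\alpha}$ is an eigenfunction of $\tau$ with the \emph{same} eigenvalue $-\frac{2(p+q)+1}{2}$, one gets $\tau(\phi_{ab})=-\frac{2(p+q)+1}{2}\cdot\phi_{ab}$ immediately.

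The substance of the proof is the conformality relation. Using that $\kappa$ is complex bilinear, expand $\kappa(\phi_{ab},\phi_{cd})$ into its $zz$-, $ww$- and $zw$-contributions and insert the three formulas of Proposition \ref{proposition-Spp}. The key structural feature is that, in contrast to $\SLH n\cong\SUs{2n}$ (Proposition \ref{proposition-SUs}), these formulas carry \emph{no} trace-correction term: each has the clean shape $\kappa(\,\cdot\,,\,\cdot\,)=-\frac{1}{2}\times(\text{product with one index pair transposed})$. Hence, after relabelling the summation indices, the $zz$-part collapses to $-\frac{1}{2}\big(\sum_{j,\alpha}u_j a_\alpha z_{j\alpha}\big)\big(\sum_{k,\beta}u_k c_\beta z_{k\beta}\big)$ and the $ww$-part to the analogous product of the $w$-linear forms. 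The $zw$-cross terms are where the computation must be handled carefully: a naive substitution produces $-\frac{1}{2}\big[(v^{t}za)(u^{t}wd)+(v^{t}zc)(u^{t}wb)\big]$, with the vectors $u$ and $v$ interchanged relative to the cross part $-\frac{1}{2}\big[(u^{t}za)(v^{t}wd)+(u^{t}zc)(v^{t}wb)\big]$ of $-\frac{1}{2}\cdot\phi_{ab}\cdot\phi_{cd}$. If these pieces reconcile, summing the three parts gives $\kappa(\phi_{ab},\phi_{cd})=-\frac{1}{2}\cdot\phi_{ab}\cdot\phi_{cd}$, so that $\mu=-\frac{1}{2}$.

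The step I expect to be the main obstacle is therefore precisely this reconciliation of the $zw$-cross sums: one must show that the apparent mismatch $(v^{t}za)(u^{t}wd)$ versus $(u^{t}za)(v^{t}wd)$ is harmless — either because of relations satisfied by the products $z_{j\alpha}w_{k\beta}$ on $\Spp pq$, or because it suffices to treat the case in which $u$ and $v$ are proportional, the general $\E_{uv}$ reducing to that one. This is the exact analogue of the corresponding point for $\SpC n$ and, in the compact model, for $\Sp n$. Once it is settled, the remaining bookkeeping is routine, and — unlike the cases $\SOC n$ and $\SLC n$ — no isotropy or genericity hypothesis on the parameters enters, since the symplectic constraint already removes the trace term that such a hypothesis would otherwise be needed to kill.
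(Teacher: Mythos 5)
You have correctly reduced everything to the $zw$-cross terms, and your worry about them is not a technicality you can wave away: as written, this is where the proof (and, taken literally, the statement) breaks. Given the index structure of Proposition \ref{proposition-Spp}, namely $\kappa(z_{j\alpha},w_{k\beta})=-\tfrac12\,z_{k\alpha}w_{j\beta}$ (the \emph{row} indices are exchanged), the cross contribution to $\kappa(\phi_{ab},\phi_{cd})$ is exactly the $-\tfrac12\bigl[(v^{t}za)(u^{t}wd)+(v^{t}zc)(u^{t}wb)\bigr]$ you computed, and it agrees with the cross part of $-\tfrac12\,\phi_{ab}\phi_{cd}$ only if $u_jv_k=u_kv_j$ for all $j,k$, i.e.\ only if $u$ and $v$ are linearly dependent. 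Neither of your proposed rescues works: there are no holomorphic relations among the products $z_{j\alpha}w_{k\beta}$ on $\Spp pq$ that identify $z_{k\alpha}w_{j\beta}$ with $z_{j\alpha}w_{k\beta}$, and the general case does not ``reduce'' to the proportional one --- it simply fails outside it. Concretely, for $p=q=1$ take the group element $g=\mathrm{diag}(1,j)\cdot g(t)$, where $g(t)$ is the real matrix with diagonal entries $\cosh t$ and off-diagonal entries $\sinh t$; one checks $g\in\Spp 11$ and that its complex coordinates satisfy $z_{21}=w_{12}=0$ while $z_{11}=w_{22}=\cosh t$. With $u=e_1$, $v=e_2$ the functions $\phi=z_{11}$ and $\psi=w_{22}$ both lie in $\E_{uv}$, yet $\kappa(\phi,\psi)=-\tfrac12\,z_{21}w_{12}=0$ whereas $-\tfrac12\,\phi\psi=-\tfrac12\cosh^{2}t\neq 0$.

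The correct completion is therefore to impose $u\wedge v=0$, and then without loss of generality $u=v$, since $\E_{u,\lambda u}=\E_{u,u}$ after absorbing $\lambda$ into the parameter $b$. With a single fixed vector the argument closes immediately: writing $F=[z\;w]$ for the $(p+q)\times 2(p+q)$ block of holomorphic coordinates, Proposition \ref{proposition-Spp} reads uniformly as $\kappa(F_{jA},F_{kB})=-\tfrac12\,F_{kA}F_{jB}$, so for $\phi=\sum_{j,A}v_je_AF_{jA}$ and $\psi=\sum_{k,B}v_kf_BF_{kB}$ with $e=(a,b)$, $f=(c,d)\in\cn^{2(p+q)}$ the relabelling $j\leftrightarrow k$ uses only the symmetry of $v_jv_k$ and yields $\kappa(\phi,\psi)=-\tfrac12\,\phi\psi$; the tension-field identity follows from linearity as you say. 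This single-vector family, contracted against the rows of $[z\;w]$, is precisely the eigenfamily of \cite{Gud-Sak-1} for the compact group $\Sp n$ that the paper's one-line proof is implicitly invoking, and the same caveat applies verbatim to the analogous two-vector statements for $\GLH n$, $\SUs{2n}$, $\SpC n$ and $\SOs{2n}$. So: your diagnosis of the obstacle is exactly right; what is missing is the recognition that the mismatch is fatal for independent $u,v$ and that the theorem must be proved (and read) with $u\parallel v$.
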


\begin{proof}
The statement follows directly from Proposition \ref{proposition-Spp}.
\end{proof}


\section{Acknowledgements}


The first author would like to thank the Department of Mathematics at Lund University for its great hospitality during her time there as a postdoc.

The authors would like to thank Albin Ingelstr\" om for useful discussions on case of $\SUU pq$.


\end{document}